\newtheorem{theorem}{Theorem}
\newtheorem{claim}[theorem]{Claim}
\newtheorem{corollary}[theorem]{Corollary}
\newtheorem{lemma}[theorem]{Lemma}
\newtheorem{proposition}[theorem]{Proposition}
\newtheorem{algorithm}[theorem]{Algorithm}
\theoremstyle{definition}
\newtheorem{definition}[theorem]{Definition}
\newtheorem{assumptions}[theorem]{Assumptions}
\newtheorem{notation}[theorem]{Notation}
\theoremstyle{remark}
\newtheorem{remark}[theorem]{Remark}
\numberwithin{theorem}{section}
\numberwithin{equation}{section}
\def\XXint#1#2#3{{\setbox0=\hbox{$#1{#2#3}{\int}$}
     \vcenter{\hbox{$#2#3$}}\kern-.5\wd0}}
\newcommand{\bbH}{\mathbb{H}}
\newcommand{\bbE}{\mathbb{E}}
\newcommand{\bbG}{\mathbb{G}}
\newcommand{\bbF}{\mathbb{F}}
\newcommand{\bbN}{\mathbb{N}}
\newcommand{\bbR}{\mathbb{R}}
\newcommand{\bbQ}{\mathbb{Q}}
\begin{document}
\title[UDS in Carnot Groups of Arbitrarily High Step]{Universal Differentiability Sets in Carnot Groups of Arbitrarily High Step}

\author[Andrea Pinamonti]{Andrea Pinamonti}
\address[Andrea Pinamonti]{Department of Mathematics, University of Trento, Via Sommarive 14, 38050 Povo (Trento), Italy}
\email[Andrea Pinamonti]{Andrea.Pinamonti@unitn.it}

\author[Gareth Speight]{Gareth Speight}
\address[Gareth Speight]{Department of Mathematical Sciences, University of Cincinnati, 2815 Commons Way, Cincinnati, OH 45221, United States}
\email[Gareth Speight]{Gareth.Speight@uc.edu}

\keywords{Carnot group, Lipschitz map, differentiable, directional derivative, universal differentiability set}

\date{\today}

\begin{abstract}
We show that any Carnot group $\bbG$ with sufficiently many \emph{deformable directions} contains a measure zero set $N$ such that every Lipschitz map $f\colon \bbG\to \bbR$ is differentiable at some point of $N$. We also prove that model filiform groups satisfy this condition, extending some previous results to a class of Carnot groups of arbitrarily high step.
%We show that every model filiform group $\bbE_{n}$ contains a measure zero set $N$ such that every Lipschitz map $f\colon \bbE_{n}\to \bbR$ is differentiable at some point of $N$. 
Essential to our work is the question of whether the existence of an (almost) maximal directional derivative $Ef(x)$ in a Carnot group implies the differentiability of a Lipschitz map $f$ at $x$. We show that such an implication is valid in model Filiform groups for directions that are outside a one-dimensional subspace of horizontal directions. Conversely, we show that this implication fails for every horizontal direction in the free Carnot group of step three and rank two.
\end{abstract}

\maketitle

%%% ADD EXPLANATION OF DISTANCE ESTIMATE IN DEFORMABILITY USING TRIANGLES IN R^2!!!!!!!!

%%% COMMENTS TO THINK ABOUT!!! %%%
\begin{comment}
Note that the definition of deformability also involves a direction bound; at present we do not know an example of a horizontal vector $E$ in a Carnot group which is not deformable but nevertheless the CC distance is differentiable at $\exp(E)$.

What happens in the free group of step 3 and 2 generators? Can one construct measure zero UDS or non-differentiable Lipschitz functions?

Do we need the bound on the number of line segments in the definition of deformability? Can we improve the lemma for distance between curves to have something like in the step 2 case?

While the definition of almost maximal involves the measure zero set $N$, it also makes sense as a condition with $N=\bbG$. Can we use the implication almost maximal implies differentiability to conclude anything geometric about $\bbG$? This would be a counterpart to the equivalence of maximal implies differentiability with differentiability of the distance.

Maybe using compactness of the unit sphere we can improve some of the parameters to not depend on the choice of $E\in V_{1}$ with $\omega(E)=1$ in the definition of deformability?

Is the dilation of a horizontal curve again a horizontal curve? Not so obvious? At least yes for concatenations of horizontal lines? If so we could simplify the definition of deformability by replacing $s$ by $1$! This might make for a more intuitive condition.
\end{comment}

\section{Introduction}\label{intro}

Rademacher's theorem asserts that every Lipschitz map $f\colon \bbR^{n}\to \bbR^{m}$ is differentiable almost everywhere with respect to the Lebesgue measure. This important result has been extended to many other spaces and measures \cite{AM14, Che99, LPT13, Pan89}. It is also interesting to consider whether Rademacher's theorem admits a converse: given a Lebesgue null set $N\subset \bbR^{n}$, does there exist a Lipschitz map $f\colon \bbR^{n}\to \bbR^{m}$ which is differentiable at no point of $N$? The answer to this question is yes if and only if $n\leq m$ and combines the work of several authors \cite{Zah46, Pre90, PS15, ACP10, CJ15}. In the case where $n>m=1$, the results in \cite{DM11, DM12, DM14} provide a stronger result: there is a compact set of Hausdorff dimension one in $\bbR^{n}$ which contains some point of differentiability of any Lipschitz map $f\colon \bbR^{n}\to \bbR$. Such a set may even be chosen with upper Minkowski dimension one \cite{DM14}. Sets containing a point of differentiability for any real-valued Lipschitz map are called \emph{universal differentiability sets}. We refer the reader to \cite{PM} and the references therein for more discussion of such sets.

The present paper continues the investigation of universal differentiability sets in Carnot groups which was started in \cite{PS16, LPS17}, see also the survey \cite{PSsurvey}. 
We recall that a Carnot group (Definition \ref{Carnot}) is a simply connected Lie group whose Lie algebra $\mathfrak g$ admits a stratification, i.e. it admits a decomposition $\mathfrak g= V_1\oplus \dots \oplus V_s$  where $V_{i+1}=[V_1,V_i]$ for $i=1,\dots, s-1$. The subspace $V_1$ is called the horizontal layer while $s$ is the step of the Carnot group and to some extent indicates its complexity (Carnot groups of step one are simply Euclidean spaces). Carnot groups have a rich geometric structure adapted to the horizontal layer, including translations, dilations, Carnot-Carath\'{e}odory (CC) distance, and a Haar measure \cite{ABB, CDPT07, Gro96, Mon02}. In the last two decades Carnot groups have been studied in connection with several different areas of mathematics, such as PDE, differential geometry, control theory, geometric measure theory, mathematical finance and robotics. Their rich structure allows one to define differentiability of maps between Carnot groups (Definition \ref{pansudifferentiability}). Pansu's theorem states that every Lipschitz map is differentiable almost everywhere with respect to the Haar measure \cite{Pan89}. This is a generalization of Rademacher's theorem to Carnot groups.

In \cite{PS16}, it was shown that Heisenberg groups contain measure zero universal differentiability sets. Heisenberg groups are the most frequently studied non-Euclidean Carnot groups and have step two. In \cite{LPS17} this result was extended to give a measure zero and Hausdorff dimension one universal differentiability set in any step two Carnot group. The present paper extends these results and the associated techniques to higher step Carnot groups satisfying a precise geometric condition, namely having sufficiently many \emph{deformable directions} (Definition \ref{deform}). This is a geometric condition expressing that, roughly speaking, horizontal lines can be nicely modified to pass through nearby points, without changing too much their length or their direction. This condition applies in particular to model filiform groups (Definition \ref{filiform}), which can have arbitrarily high step despite their relatively simple Lie brackets. Model filiform groups have been previously investigated in connection with non-rigidity of Carnot groups \cite{O08}, quasiconformal mappings between Carnot groups \cite{War03, Xia15} and geometric control theory \cite{BLU07}.

Before describing more carefully the results of this paper, we briefly discuss the techniques involved in constructing universal differentiability sets. We believe these are of independent interest as they only depend on the geometry of the space involved. In \cite{Pre90, PS16, LPS17}, the key technique for constructing measure zero universal differentiability sets builds upon the idea that existence of a \emph{maximal directional derivative} for a Lipschitz map suffices for its differentiability. In Euclidean spaces, this observation takes the following form: if $f\colon \bbR^{n} \to \bbR$ is Lipschitz and $|f'(x,v)|= \mathrm{Lip}(f)$ for some direction $v\in \bbR^{n}$ with $|v|=1$, then $f$ is differentiable at $x$, see \cite{Fit84}. However, a general Lipschitz map may not have such a maximal directional derivative. In \cite{Pre90} it was shown that any Lipschitz map $f\colon \bbR^{n} \to \bbR$ admits a linear perturbation that has an \emph{almost maximal directional derivative} at some point $x$ in a direction $v$. They also show that almost maximality suffices for differentiability and the point $x$ can be chosen inside a measure zero set $N$ that is independent of $f$. Combining the two facts we have that $N$ is a universal differentiability set of measure zero.

In \cite{PS16, LPS17}, the present authors and E. Le Donne showed that if $f\colon \bbG \to \bbR$ is a Lipschitz map on a step two Carnot group and $Ef(x)$ is a maximal directional derivative (Definition \ref{maximal}), then $f$ is differentiable at $x$ in the sense of Pansu, see Definition \ref{pansudifferentiability}). Moreover, in step two Carnot groups, it was also shown that almost maximality of a directional derivative suffices for differentiability. Generalizing the Euclidean techniques one can then construct a measure zero and Hausdorff dimension one universal differentiability set. Moreover, for each horizontal direction $E$ in an arbitrary Carnot group, differentiability of the CC distance at $\exp(E)$ is equivalent to validity of the following implication: maximality of $Ef(x)$ for Lipschitz $f\colon \bbG\to \bbR$ implies differentiability of $f$ at $x$ (Proposition \ref{equivalence}). However, in the Engel group, which represents the simplest step 3 Carnot group, neither of the above properties hold. The counterexample is simply given by the horizontal direction $X_{2}$, since the CC distance fails to be differentiable at $\exp(X_{2})$. It is then clear that the geometry of the space impacts the differentiability of its Lipschitz maps.

The reason why maximality implies differentiability (Proposition \ref{equivalence}(2)) fails for the direction $X_{2}$ in the Engel group is that horizontal lines in the direction $X_{2}$ cannot be modified to pass through nearby points without increasing too much their length. If `maximality implies differentiability' fails, then so does the stronger implication `almost maximality implies differentiability'. This stronger implication depends upon the possibility of modifying horizontal lines with some controlled bounds on both their length \emph{and} their direction. This stronger modification is useful because in `almost maximality' the directional derivative is maximal only compared to directional derivatives coming from pairs of points and directions which satisfy estimates expressed using difference quotients of the Lipschitz map. A direction is deformable if suitable deformations of horizontal lines are possible. All horizontal directions in step two Carnot groups are deformable. This was proved in \cite{LPS17}, though the word deformable was not used there. In the present paper we show that in model filiform groups $\bbE_{n}$, any horizontal direction other than $\pm X_{2}$ is deformable (Theorem \ref{deformFiliform}). We also show  that $\pm X_{2}$ are deformable in $\bbE_{n}$ if and only if $n=2$ or $n=3$ (Corollary \ref{pmX2}). 

A set $N$ in a Carnot group $\bbG$ is a universal differentiability set if every Lipschitz map $f\colon \bbG \to \bbR$ is differentiable at some point of $N$ (Definition \ref{defUDSabstract}). We say that a set has CC Hausdorff dimension one if it has Hausdorff dimension one with respect to the CC metric. Our main result is the following.

\begin{theorem}\label{maintheorem}
Let $\bbG\neq \bbR$ be any Carnot group that has a ball of uniformly deformable directions (see Assumptions \ref{ass}). Then $\bbG$ contains a universal differentiability set $N\subset \bbG$ of CC Hausdorff dimension one (in particular measure zero).

In particular, all model filiform groups $\bbE_{n}$ for $n\geq 2$ contain a CC Hausdorff dimension one universal differentiability set.
\end{theorem}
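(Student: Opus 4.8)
The plan is to establish the general statement — for an arbitrary Carnot group $\bbG\neq\bbR$ satisfying Assumptions \ref{ass} — and then deduce the filiform case by checking that each $\bbE_{n}$ meets those assumptions. The argument follows the scheme of Preiss \cite{Pre90} and its Carnot-group incarnations \cite{PS16, LPS17}: first one constructs, independently of $f$, a compact set $N$ of CC Hausdorff dimension one which contains a dense family of short horizontal segments whose directions fill the given ball $B\subset V_{1}$ of uniformly deformable directions; then, given a Lipschitz $f\colon\bbG\to\bbR$, one perturbs it by a horizontal linear functional so that the perturbed map acquires an \emph{almost maximal} directional derivative at a point of $N$; finally one shows that almost maximality of a directional derivative in a deformable direction forces Pansu differentiability (Definition \ref{pansudifferentiability}). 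Since horizontal linear functionals are everywhere Pansu differentiable, the perturbed map and $f$ are differentiable at exactly the same points, so $N$ is a universal differentiability set; and since $\bbG\neq\bbR$ has homogeneous dimension at least two, dimension one forces Haar measure zero.

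\emph{The set $N$.} Fix a countable family of horizontal segments $t\mapsto a\exp(tE)$ with $a$ in a countable dense subset of $\bbG$, $E$ in a countable dense subset of $B$, and rational lengths. Mimicking the ``wiggly tube'' construction of Dor\'e--Maleva used in \cite{DM14, LPS17}, set $N=\bigcap_{m}N_{m}$ where $N_{m}$ is a finite union of thin tubular neighbourhoods of finitely many of these segments, with rapidly shrinking radii chosen so that each $N_{m}$ is compact, $N_{m+1}\subset N_{m}$, every segment of the family has sub-arcs that are approximated arbitrarily well by arcs lying in $N$, and $N$ is covered at every scale $r>0$ by $O(1/r)$ CC-balls of radius $r$. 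The last property yields $\dim_{CC}N=1$; its verification uses only the metric geometry of $\bbG$ and the presence of horizontal line segments, so it is formally the same as in the step two case.

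\emph{Perturbation, almost maximality, and differentiability.} Normalise $f$ to be $1$-Lipschitz for the CC metric. Running the variational iteration of \cite{Pre90, PS16} over horizontal linear perturbations $f+u$ of small norm and over pairs $(x,E)$, with $x$ on (a limit of) segments defining $N$ and $E\in B$ the corresponding direction — at each stage passing to a competing pair that beats the current value by more than its difference quotients allow — and using that directional derivatives are bounded by $1$ to force convergence, one obtains a perturbation $g=f+u_{\infty}$, a point $x_{\infty}\in N$ and a direction $E_{\infty}\in B$ such that $E_{\infty}g(x_{\infty})$ exists and is almost maximal in the quantitative sense needed. To conclude differentiability, suppose $g$ is not Pansu differentiable at $x_{\infty}$; then there are points $y_{k}\to x_{\infty}$ along which the relevant difference quotient of $g$ stays bounded below. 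Because $E_{\infty}$ is deformable (Definition \ref{deform}), the horizontal segment through $x_{\infty}$ in direction $E_{\infty}$ can be deformed into a concatenation of boundedly many horizontal segments passing close to some $y_{k}$, with controlled change of total length and of direction, and with all directions still in $B$; hence this deformed curve lies in $N_{m+1}\subset N$. On a sub-segment of it one then finds a point $x'$ near $x_{\infty}$ and a direction $E'$ near $E_{\infty}$, still satisfying the difference-quotient side conditions of almost maximality, at which $|E'g(x')|$ beats $|E_{\infty}g(x_{\infty})|$ by a definite amount — a contradiction, because uniform deformability makes the length increase negligible compared with the gain while keeping the direction change within tolerance. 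Thus $g$, hence $f$, is differentiable at $x_{\infty}\in N$.

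\emph{The filiform groups.} By Theorem \ref{deformFiliform} every horizontal direction of $\bbE_{n}$ other than $\pm X_{2}$ is deformable; taking $E_{0}\in V_{1}$ not proportional to $X_{2}$ and a closed Euclidean ball about $E_{0}$ disjoint from $\bbR X_{2}$, a compactness argument upgrades deformability to uniform deformability on that ball, so $\bbE_{n}$ satisfies Assumptions \ref{ass} and the general statement applies. I expect the chief difficulty to lie in the interplay between the last two steps: the tube radii defining $N$ must be chosen so that every deformed curve arising in the differentiability argument stays inside $N$, while at the same time the quantitative losses in length and direction from the deformation remain small enough to strictly contradict almost maximality; reconciling these two budgets is precisely where the uniformity in Assumptions \ref{ass} is essential.
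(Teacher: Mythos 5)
Your proposal follows essentially the same route as the paper's proof: a CC-Hausdorff-dimension-one set $N$ built from countably many deformed horizontal curves (Lemma \ref{uds}), the Preiss variational iteration (Proposition \ref{DoreMaleva}) to locate, after a $\bbG$-linear perturbation, an almost maximal directional derivative at some $(x_{\ast},E_{\ast})$ with $x_{\ast}\in N$ and $E_{\ast}\in B$, and the deformability-driven mean-value argument (Theorem \ref{almostmaximalityimpliesdifferentiability}) to upgrade almost maximality to Pansu differentiability. Two small remarks: the compactness argument you invoke for $\bbE_{n}$ is superfluous, since the ``Moreover'' clause of Theorem \ref{deformFiliform} already supplies a whole ball of directions sharing the same deformability parameters; and the difficulty you rightly flag --- keeping the deformed competitor curves inside $N$ --- is handled in the paper not by fine-tuning tube radii but by taking $N$ to be a $G_{\delta}$ set containing the countable union of all rationally-parametrized deformed curves, and then choosing rational approximations $\tilde{x}_{\ast},\tilde{E}_{\ast}$ inside the almost-maximality proof so that the competitor curve appears verbatim in that countable family.
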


A ball of uniformly deformable directions is needed in Theorem \ref{maintheorem} because one constructs the measure zero UDS using countably many horizontal curves which are dense in some sense. To prove that `almost maximality implies differentiability' (Theorem \ref{almostmaximalityimpliesdifferentiability}) one needs to approximate the almost maximal direction with a sequence of deformable ones. If we do not require that the UDS has measure zero, then only one deformable direction is needed to show that almost maximality implies differentiability.

Notice that Theorem \ref{maintheorem} applies to the Engel group $\bbE_{4}$, which was the problematic group in \cite{LPS17}. Hence one may ask whether Theorem \ref{maintheorem} holds without assuming Assumptions \ref{ass}. Our second result shows that, unless one fundamentally changes the techniques used, the class of Carnot groups must indeed be restricted.

\begin{theorem}\label{strongnondiff}
In the free Carnot group $\bbF_{2,3}$ with rank two and step three, the CC distance is not differentiable at $\exp(E)$ for any horizontal direction $E\in V_{1}$. 

Consequently, `maximality implies differentiability' fails in $\bbF_{2,3}$ for every horizontal direction.
\end{theorem}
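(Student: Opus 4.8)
The plan is to exploit the symmetry of the free group to reduce to a single direction, to reduce the differentiability question to the already understood Engel case, and finally to invoke Proposition \ref{equivalence} for the statement about maximality.

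\textbf{Reduction to one direction.} Since $\bbF_{2,3}$ is free on two generators, every linear map of $V_1$ extends uniquely to a graded Lie algebra automorphism of $\bbF_{2,3}$ and hence to a Lie group automorphism. If the linear map is orthogonal for the inner product on $V_1$ defining the CC metric, the resulting group automorphism preserves the horizontal distribution and the horizontal metric, so it is an isometry of $(\bbF_{2,3},d)$ fixing the identity; such a map conjugates dilations and left translations, so $d(0,\cdot)$ is Pansu differentiable at $p$ if and only if it is at its image. As $O(V_1)\cong O(2)$ acts transitively on unit vectors and $\delta_t\exp(E)=\exp(tE)$ together with $d(0,\delta_t q)=t\,d(0,q)$ handles the normalization $|E|=1$, it suffices to prove that $\rho:=d(0,\cdot)$ is not differentiable at $\exp(X_2)$. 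Moreover the horizontal line $t\mapsto\exp(tX_2)$ is globally length minimizing (its short sub-arcs are geodesics and $\rho(\exp(tX_2))=t\rho(\exp(X_2))$), so $\rho(\exp(X_2))=1$ and $\rho$ has directional derivative $1$ along $X_2$ there. Since $\rho$ is $1$-Lipschitz, any Pansu differential $L$ at $\exp(X_2)$ is a homogeneous homomorphism with $|L(\exp v)|\le|v|$ on $V_1$ and $L(\exp X_2)=1$, forcing $L(\exp v)=\langle v,X_2\rangle$ and $L\equiv 0$ on $\exp(V_2\oplus V_3)$.

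\textbf{Reduction to the Engel group.} Let $\pi\colon\bbF_{2,3}\to\bbE_4$ be the Carnot morphism induced by killing $X_5$; it is the identity on $V_1$, is $1$-Lipschitz, and intertwines dilations, so $\rho_{\bbF_{2,3}}(0,g)\ge\rho_{\bbE_4}(0,\pi(g))$ and $\pi(\exp(tX_2))=\exp_{\bbE_4}(tX_2)$. Because $\rho_{\bbE_4}$ fails to be differentiable at $\exp_{\bbE_4}(X_2)$ — the horizontal line in direction $X_2$ cannot be deformed to pass through nearby points without a comparable increase of length — one obtains $c>0$, a sequence $s_k\to 0^+$ and points $h_k$ in a fixed compact neighbourhood of the identity in $\bbE_4$ with $\rho_{\bbE_4}\big(0,\exp_{\bbE_4}(X_2)\,\delta_{s_k}(h_k)\big)\ge 1+s_kL_{\bbE_4}(h_k)+c\,s_k$. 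Lifting $h_k$ to $\tilde h_k\in\bbF_{2,3}$ by the continuous section of $\pi$ that appends a vanishing $X_5$-coordinate keeps $\{\tilde h_k\}$ in a compact set, gives $\pi(\delta_{s_k}\tilde h_k)=\delta_{s_k}h_k$, and yields $L_{\bbF_{2,3}}(\tilde h_k)=L_{\bbE_4}(h_k)$ since $\pi$ is the identity on $V_1$ and both differentials are orthogonal projection onto $X_2$. Hence $\rho_{\bbF_{2,3}}\big(0,\exp(X_2)\,\delta_{s_k}\tilde h_k\big)\ge 1+s_kL_{\bbF_{2,3}}(\tilde h_k)+c\,s_k$, contradicting Pansu differentiability of $\rho_{\bbF_{2,3}}$ at $\exp(X_2)$ with the only admissible differential $L_{\bbF_{2,3}}$ identified above. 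If one prefers to stay inside $\bbF_{2,3}$, one proves the same one-sided lower bound directly: for a suitable bounded family $h_s$ with $L(h_s)=0$, any near-optimal horizontal path to $\exp(X_2)\delta_s h_s$ has $(x_1,x_2)$-projection close to the segment $\{(0,t)\}$, and tracking the iterated-integral coordinates $x_3,x_4,x_5$ shows the required $x_4$-value cannot be attained without extra length of order $s$ — this is the quantitative failure of deformability of $X_2$, which in turn reflects that every horizontal line in $\bbF_{2,3}$ is an abnormal geodesic (with abnormal lift $X_4^\ast$).

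\textbf{The consequence.} By Proposition \ref{equivalence}, non-differentiability of the CC distance at $\exp(E)$ is equivalent to failure of the implication ``maximality of $Ef(x)$ for Lipschitz $f\colon\bbF_{2,3}\to\bbR$ implies differentiability of $f$ at $x$''. Since differentiability of $\rho$ has been ruled out at $\exp(E)$ for every horizontal $E$, this implication fails for every horizontal direction.

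\textbf{Main obstacle.} The only nontrivial point is the one-sided distance estimate invoked above: a sharp lower bound showing that perturbing $\exp(X_2)$ forces the CC distance to exceed the linear prediction by an amount of order $s$ rather than $o(s)$. This is an isoperimetric-type estimate on horizontal curves (equivalently, a statement about how the endpoint map degenerates along the abnormal geodesic $t\mapsto\exp(tX_2)$), and it is the crux of the proof; Steps on symmetry, on identifying the differential, and the deduction from Proposition \ref{equivalence} are formal.
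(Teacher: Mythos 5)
Your argument is correct and essentially the same as the paper's: both exploit freeness of $\bbF_{2,3}$ to obtain a Lie group homomorphism onto the Engel group $\bbE_4$ sending the given horizontal direction $E$ to the bad direction $Y_2$, then transfer the known non-differentiability of the Engel CC distance at $\exp(Y_2)$ and conclude via Proposition \ref{equivalence}. The only difference is organizational: the paper constructs, for each $E=aX_1+bX_2$, a single tailored homomorphism $F\colon\bbF_{2,3}\to\bbE_4$ (by sending $X_1,X_2$ to a rotated orthonormal frame $W_1,W_2$ of the Engel horizontal layer so that $F_*(E)=Y_2$) and then simply cites Proposition \ref{quotientdiffCC}, whereas you factor this $F$ into an orthogonal automorphism of $\bbF_{2,3}$ followed by the fixed quotient $\pi\colon\bbF_{2,3}\to\bbE_4$ and re-derive the transfer estimate by lifting witness sequences through a section of $\pi$ by hand rather than invoking Proposition \ref{quotientdiffCC}.
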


This improves in a strong way upon \cite{LPS17}, where it was shown that maximality implies differentiability fails for one direction in the Engel group. It would be interesting to know whether $\bbF_{2,3}$ contains a measure zero UDS or instead the opposite result holds: for every null set $N\subset \bbF_{2,3}$, does there exists a Lipschitz map $f\colon \bbF_{2,3}\to \bbR$ which is differentiable at no point of $N$? At present we do not know the answer to this question. 

The notion of deformability introduced in the present paper seems to share some analogy with the property of not being an abnormal curve, see \cite{Vit} for the definition of abnormal curves. For example, it is known that in $\bbF_{2,3}$ the set of all abnormal curves coincides with the set of horizontal lines \cite{ABB}. This phenomenon could explain why `maximality implies differentiability' fails in $\bbF_{2,3}$ for every horizontal direction. A similar characterization holds in model filiform groups, which admit only one abnormal curve which is given by the $\pm X_{2}$ direction \cite{ABB}. However the picture is far from being clear. For example in $\bbF_{3,2}$, where `maximality implies differentiability' holds for every horizontal direction \cite{LPS17}, every horizontal line is an abnormal curve and viceversa, see \cite[Proposition 3.11, Theorem 3.14]{LMOPV16} and \cite{OV17}. We plan to investigate this possible relation in future works.

We now describe the structure of the paper. In Section \ref{preliminaries} we recall the necessary background on Carnot groups and differentiability. In Section \ref{CCdifferentiability} we investigate the differentiability of the CC distance. We show that, if $E$ is a deformable direction, then the CC distance is differentiable at $\exp(E)$ (Proposition \ref{deformimpliesdiff}) and that in any model filiform group $\bbE_{n}$, with $n\geq 4$, the CC distance is not differentiable at $\exp(\pm X_{2})$ (Proposition \ref{X2nogood}). %Consequently, the directions $\pm X_{2}$ in $\bbE_{n}$, $n\geq 4$, are not deformable. 
We eventually prove Theorem \ref{strongnondiff}. In Section \ref{CurvesFiliform} we prove Lemma \ref{Xn} and Lemma \ref{Filiformcurve} which allow us to construct suitable horizontal curves in model filiform groups. These are then used to show that in every model filiform group all horizontal directions other than $\pm X_{2}$ are deformable (Theorem \ref{deformFiliform}). In Section \ref{sectiondistanceestimate} we prove an estimate for distances between piecewise linear curves with similar directions (Lemma \ref{closedirectioncloseposition}). In Section \ref{sectionUDS} we consider Carnot groups $\bbG$ that contain a ball of uniformly deformable directions with parameters. With this assumption we construct a universal differentiability set (Lemma \ref{uds}) and prove that almost maximality implies differentiability if the direction belongs to the given ball (Theorem \ref{almostmaximalityimpliesdifferentiability}). In Section \ref{sectionconstruction} we show that any Lipschitz map $f\colon \bbG\to \bbR$ admits a group linear perturbation which has an almost maximal direction derivative at some point $x$ in some horizontal direction $E$ (Proposition \ref{DoreMaleva}). Moreover, the point $x$ can be found inside a given measure zero $G_{\delta}$ set and the direction $E$ can be found close to a starting direction $E_{0}$. A proof of Theorem \ref{maintheorem} is given by combining Theorem \ref{almostmaximalityimpliesdifferentiability} and Proposition \ref{DoreMaleva}.

\smallskip

\noindent \textbf{Acknowledgement.} The authors thank the referees for very detailed comments which greatly improved the presentation of the paper.

Part of this work was done while G. Speight was visiting the University of Trento; he thanks the institution for its hospitality. This work was supported by a grant from the Simons Foundation (\#576219, G. Speight). A. P. is a member of {\em Gruppo Nazionale per l'Analisi Ma\-te\-ma\-ti\-ca, la Probabilit\`a e le loro Applicazioni} (GNAMPA) of {\em Istituto Nazionale di Alta Matematica} (INdAM).

\section{Preliminaries}\label{preliminaries}

In this section we recall concepts which will be important throughout the paper.

\subsection{Basic notions in Carnot groups}

\begin{definition}\label{Carnot}
A \emph{Carnot group} $\bbG$ of \emph{step} $s$ is a simply connected Lie group whose Lie algebra $\mathfrak{g}$ admits a decomposition as a direct sum of subspaces of the form
\[\mathfrak{g}=V_{1}\oplus V_{2}\oplus \cdots \oplus V_{s}\]
such that $V_{i}=[V_{1},V_{i-1}]$ for any $i=2, \ldots, s$, and $[V_{1},V_{s}]=0$. The subspace $V_{1}$ is called the \emph{horizontal layer} and its elements are called \emph{horizontal left invariant vector fields}. The \emph{rank} of $\bbG$ is $\dim(V_{1})$.
\end{definition}

The exponential mapping $\exp\colon \mathfrak{g}\to \bbG$ is a diffeomorphism. Given a basis $X_{1},\ldots, X_{n}$ of $\mathfrak{g}$ adapted to the stratification, any $x\in \bbG$ can be written in a unique way as
\[x=\exp(x_{1}X_{1}+\ldots +x_{n}X_{n}).\]
We identify $x$ with $(x_{1},\ldots, x_{n})\in \bbR^{n}$ and hence $\bbG$ with $\bbR^{n}$. This is known as \emph{exponential coordinates of the first kind}. To compute the group law in these coordinates, one uses the equality
\[\exp(X)\exp(Y)=\exp(X\diamond Y)\quad \mbox{ for all } X,Y\in\mathfrak{g}.\]
Here $\diamond$ is defined by the Baker-Campbell-Hausdorff (BCH) formula
\begin{align}\label{BCH}
X\diamond Y= X+Y+\frac{1}{2}[X,Y]+\frac{1}{12}([X,[X,Y]]+[Y,[Y,X]]) + \ldots,
\end{align}
where higher order terms are nested commutators of $X$ and $Y$ \cite{Var}, see e.g. \cite[Theorem 2.2.13]{BLU07}.

Unless otherwise stated, $\bbG$ will be a Carnot group of step $s$ and rank $r$ with $\dim(\mathfrak{g})=n$ which is represented in exponential coordinates of the first kind.

We say that a curve $\gamma \colon [a,b]\to \bbG$ is absolutely continuous if it is absolutely continuous as a curve into $\bbR^{n}$. Fix a basis $X_{1}, \ldots, X_{r}$ of $V_{1}$ and an inner product norm $\omega$ on $V_{1}$ making the chosen basis orthonormal.

\begin{definition}\label{horizontalcurve}
An absolutely continuous curve $\gamma\colon [a,b]\to \bbG$ is \emph{horizontal} if there exist $u_{1}, \ldots, u_{r}\in L^{1}[a,b]$ such that
\[\gamma'(t)=\sum_{j=1}^{r}u_{j}(t)X_{j}(\gamma(t)) \quad \mbox{for almost every }t\in [a,b].\]
The \emph{length} of such a curve is $L_{\bbG}(\gamma):=\int_{a}^{b}|u|$.
\end{definition}

Since $\bbG$ is identified with $\bbR^{n}$ as a manifold, its tangent spaces are also naturally identified with $\bbR^{n}$. We say that a vector $v\in \bbR^n$ is \emph{horizontal} at $p\in \bbG$ if $v=E(p)$ for some $E\in V_1$. Thus a curve $\gamma$ is horizontal if and only if $\gamma'(t)$ is horizontal at $\gamma(t)$ for almost every $t$. All the curves of the form $t\mapsto p\exp(tV)$ for some $p\in \bbG$ and $E\in V_{1}$ are horizontal and they will be called \emph{horizontal lines}.

Chow's theorem \cite{Chow39} asserts that any two points in a Carnot group can be connected by a horizontal curve. Hence the following definition gives a metric on $\bbG$.

\begin{definition}\label{carnotdistance}
The \emph{Carnot-Carath\'{e}odory (CC) distance} between any two points $x, y\in \bbG$ is defined by
\[d(x,y):=\inf \{L_{\bbG}(\gamma)\colon \gamma \mbox{ is a horizontal curve joining } x \mbox{ and }y\}.\]
We also use the notation $d(x):=d(x,0)$ for $x\in \bbG$.
\end{definition}

Left group translations preserve lengths of horizontal curves. This implies
\[d(gx,gy)=d(x,y) \quad \mbox{for every }g,x,y \in \bbG.\]
Even though the CC distance and the Euclidean distance are not Lipschitz equivalent, they induce the same topology. Hence $\bbQ^{n}$ is dense in $\bbR^{n}$ with respect to the CC distance. The following proposition will be useful to compare the two distances \cite{NSW}, see also \cite[Corollary 5.2.10 and Proposition 5.15.1]{BLU07}.

\begin{proposition}\label{euclideanheisenberg}
Let $\bbG$ be a Carnot group of step $s$ and $K\subset \bbG$ be a compact set. Then there exists a constant $C_{\mathrm{H}} \geq 1$ depending on $K$ such that
\[ C_{\mathrm{H}}^{-1} |x-y|\leq d(x,y)\leq C_{\mathrm{H}}|x-y|^{1/s} \qquad \mbox{for all }x, y\in K.\]
\end{proposition}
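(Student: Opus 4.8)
The plan is to combine left-invariance, the dilation structure, and a comparison on the ``unit sphere'' of a homogeneous gauge, so that everything reduces to the behaviour of $d$ near the identity.

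First I would reduce to estimating the distance from the origin. By left-invariance $d(x,y)=d(0,x^{-1}y)$. The maps $\Phi(x,y):=x^{-1}y$ and $m(x,z):=xz$ are polynomial, hence smooth, so the mean value inequality applied on a fixed compact set (e.g.\ the convex hull of $K$ in $\bbR^{n}$) gives $c_{\Phi}|x-y|\le |x^{-1}y|\le C_{\Phi}|x-y|$ for all $x,y\in K$, with $c_{\Phi},C_{\Phi}>0$ depending only on $K$. Thus it suffices to prove $c\,|p|\le d(0,p)\le C\,|p|^{1/s}$ for all $p$ in the compact set $K':=\{x^{-1}y:x,y\in K\}\ni 0$, with $c,C$ depending only on $K$. (As a small self-contained remark: for a horizontal coordinate, $i\le r$, any horizontal curve $\gamma$ joining two points has $\gamma_{i}'=u_{i}$, so $|\gamma_{i}(b)-\gamma_{i}(a)|\le \int|u|=L_{\bbG}(\gamma)$; taking the infimum gives $|p_{i}|\le d(0,p)$ directly. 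The deeper coordinates are what require the argument below.)

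Second, introduce the homogeneous gauge $\rho(p):=\max_{1\le i\le n}|p_{i}|^{1/d_{i}}$, where $d_{i}\in\{1,\dots,s\}$ is the stratification degree of $X_{i}$ (so $d_{i}=1$ for $i\le r$ and $\max_{i}d_{i}=s$). In exponential coordinates of the first kind the dilation acts by $\delta_{\lambda}(p)=(\lambda^{d_{1}}p_{1},\dots,\lambda^{d_{n}}p_{n})$, so $\rho(\delta_{\lambda}p)=\lambda\,\rho(p)$; moreover $\rho$ is continuous, vanishes only at $0$, and its sublevel sets $\{\rho\le R\}$ are closed and bounded, hence compact. The key point is that $d(0,\cdot)$ is comparable to $\rho$ on all of $\bbG$: indeed $d(0,\cdot)$ is finite by Chow's theorem, strictly positive off $0$, continuous for the Euclidean topology (the CC and Euclidean topologies coincide), and $1$-homogeneous under dilations, since $(\delta_{\lambda})_{\ast}X_{j}=\lambda X_{j}$ for $X_{j}\in V_{1}$ implies that $\delta_{\lambda}$ carries a horizontal curve joining $0$ to $p$ to a horizontal curve joining $0$ to $\delta_{\lambda}p$ with $L_{\bbG}(\delta_{\lambda}\circ\gamma)=\lambda L_{\bbG}(\gamma)$, whence $d(0,\delta_{\lambda}p)=\lambda\,d(0,p)$. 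Restricting the positive continuous function $d(0,\cdot)$ to the compact set $\Sigma:=\{\rho=1\}$ yields $0<c_{1}\le d(0,p)\le c_{2}<\infty$ on $\Sigma$, and writing $p\neq 0$ as $p=\delta_{\rho(p)}(q)$ with $q=\delta_{1/\rho(p)}p\in\Sigma$ gives $c_{1}\rho(p)\le d(0,p)\le c_{2}\rho(p)$ for every $p\in\bbG$.

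Finally, on the compact set $K'$ I would compare $\rho$ with $|\cdot|$ by elementary estimates. With $M:=\sup_{p\in K'}|p|<\infty$, using $0\le|p_{i}|\le|p|\le M$ and $1\le d_{i}\le s$ and splitting into the cases $|p_{i}|\le 1$ and $|p_{i}|>1$, one obtains $c_{3}|p|\le\rho(p)\le c_{4}|p|^{1/s}$ on $K'$, where $c_{3},c_{4}>0$ depend only on $M$ (hence only on $K$), the exponent $1/s$ being forced by the top layer. Chaining this with the previous step and the reduction through $\Phi$ yields the two-sided bound with a constant $C_{\mathrm{H}}\ge 1$ depending only on $K$. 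The only non-elementary ingredient used is that the CC and Euclidean distances induce the same topology, in particular continuity of $d(0,\cdot)$; this was recalled just before the statement, and quantitatively it is exactly the Nagel--Stein--Wainger ball--box estimate \cite{NSW}, which I would simply invoke. If one wanted a fully self-contained proof, this continuity/properness is the one genuine obstacle: it requires confining near-minimizing horizontal curves to a fixed compact set and exploiting the triangular polynomial form $X_{i}=\partial_{i}+\sum_{d_{k}>d_{i}}a_{ik}(x)\partial_{k}$ of the left-invariant vector fields together with the dilation structure.
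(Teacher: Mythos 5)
Your proof is correct, and it is genuinely different from what appears in the paper: the paper simply cites Nagel--Stein--Wainger and \cite[Corollary 5.2.10, Proposition 5.15.1]{BLU07} and gives no argument at all. Your route is the standard homogeneity argument one finds in textbook treatments, and you have it right. The reduction $d(x,y)=d(0,x^{-1}y)$ with two-sided Euclidean comparability of $x^{-1}y$ and $x-y$ on $K\times K$ is sound, since both $(x,y)\mapsto x^{-1}y$ and $(x,z)\mapsto xz$ are polynomial, so one can run a mean-value estimate in each direction on a compact set. The introduction of the homogeneous gauge $\rho(p)=\max_i |p_i|^{1/d_i}$, the identity $d(0,\delta_\lambda p)=\lambda\, d(0,p)$ (which follows from $(\delta_\lambda)_*X_j=\lambda X_j$ on $V_1$), and the comparison $c_1\le d(0,\cdot)\le c_2$ on the compact dilation sphere $\{\rho=1\}$ are all correct; from there the two-sided bound $c_1\rho\le d(0,\cdot)\le c_2\rho$ on all of $\bbG$ follows by scaling. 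The final elementary comparison $c_3|p|\le\rho(p)\le c_4|p|^{1/s}$ on a bounded set, splitting on $|p_i|\lessgtr 1$ and using $1\le d_i\le s$, is also correct. Your side remark about horizontal coordinates, $|p_i|\le d(0,p)$ for $i\le r$, matches the paper's observation that $d(x,y)\ge |p(y)-p(x)|$. What your write-up buys over the paper's citation-only presentation is that it cleanly isolates the single genuinely nontrivial ingredient, namely continuity (and positivity off the origin) of the CC distance in the Euclidean topology --- the ball--box estimate --- and shows that everything else is a soft compactness-plus-homogeneity bookkeeping argument. You acknowledge, correctly, that this one input still has to be imported (from NSW, or from Chow--Rashevskii plus a confinement argument), so the proof is not literally self-contained, but the decomposition is both valid and clarifying.
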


We will also need the following estimate for the CC distance \cite[Lemma 2.13]{FS}. 

\begin{proposition}\label{conjugatedistance}
Let $\bbG$ be a Carnot group of step $s$. Then there is a constant $C_D\geq 1$ such that
\begin{align*}
d(x^{-1}yx) \leq C_D\,\Big(d(y)+ d(x)^{\frac{1}{s}}d(y)^{\frac{s-1}{s}}+d(x)^{\frac{s-1}{s}}d(y)^{\frac{1}{s}}\Big)\quad \mbox{for }x,y\in\bbG.
\end{align*}
\end{proposition}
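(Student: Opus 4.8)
The plan is to exploit the scaling structure of $\bbG$. Both sides of the asserted inequality are homogeneous of degree one under the dilations $\delta_\lambda$ of $\bbG$: the left side because $\delta_\lambda$ is a group automorphism with $d(\delta_\lambda z)=\lambda\, d(z)$, and each term on the right because $d(\delta_\lambda x)^{\theta}d(\delta_\lambda y)^{1-\theta}=\lambda\, d(x)^{\theta}d(y)^{1-\theta}$. Hence it suffices to prove the estimate after normalizing. If $x=y=0$ both sides vanish; otherwise set $\lambda:=d(x)+d(y)>0$ and $\bar x:=\delta_{1/\lambda}x$, $\bar y:=\delta_{1/\lambda}y$, which lie in the compact set $K:=\{z\in\bbG:d(z)\le 1\}$. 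Since $\delta_{1/\lambda}$ is an automorphism, $\delta_{1/\lambda}(x^{-1}yx)=\bar x^{-1}\bar y\bar x$, so $d(x^{-1}yx)=\lambda\, d(\bar x^{-1}\bar y\bar x)$.

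The key point is then a crude bound $d(\bar x^{-1}\bar y\bar x)\le C'\,d(\bar y)^{1/s}$ valid uniformly for $\bar x,\bar y\in K$. In exponential coordinates the group law is polynomial (the BCH series terminates on a nilpotent group), so $(\bar x,\bar y)\mapsto \bar x^{-1}\bar y\bar x$ is a smooth map fixing $0$ in the second variable; its $\bar y$-derivatives are bounded on the compact set $K\times K$, giving a constant $L$ with $|\bar x^{-1}\bar y\bar x|=|\bar x^{-1}\bar y\bar x-\bar x^{-1}0\,\bar x|\le L|\bar y|$ for all $\bar x,\bar y\in K$. Let $K'$ be the (compact) image of $K\times K$ under this map and apply Proposition \ref{euclideanheisenberg} to the compact set $K\cup K'$ to obtain $C_{\mathrm H}\ge 1$ with $|\bar y|\le C_{\mathrm H}\,d(\bar y)$ and $d(w)\le C_{\mathrm H}|w|^{1/s}$ for $w\in K'$. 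Combining these,
\[
d(\bar x^{-1}\bar y\bar x)\le C_{\mathrm H}\,|\bar x^{-1}\bar y\bar x|^{1/s}\le C_{\mathrm H}\,(LC_{\mathrm H})^{1/s}\,d(\bar y)^{1/s}=:C'\,d(\bar y)^{1/s}.
\]

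Finally I scale back and tidy up. Since $d(\bar y)=d(y)/\lambda$,
\[
d(x^{-1}yx)=\lambda\, d(\bar x^{-1}\bar y\bar x)\le C'\lambda\Big(\tfrac{d(y)}{\lambda}\Big)^{1/s}=C'\big(d(x)+d(y)\big)^{\frac{s-1}{s}}d(y)^{\frac1s},
\]
and the subadditivity $(a+b)^{\frac{s-1}{s}}\le a^{\frac{s-1}{s}}+b^{\frac{s-1}{s}}$ (the exponent lies in $[0,1]$) yields
\[
d(x^{-1}yx)\le C'\Big(d(x)^{\frac{s-1}{s}}d(y)^{\frac1s}+d(y)^{\frac{s-1}{s}}d(y)^{\frac1s}\Big)=C'\Big(d(x)^{\frac{s-1}{s}}d(y)^{\frac1s}+d(y)\Big),
\]
which is dominated by the claimed right-hand side since the remaining term $d(x)^{1/s}d(y)^{(s-1)/s}$ is nonnegative; one takes $C_D=\max\{C',1\}$.

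The only genuinely delicate point is the bookkeeping around Proposition \ref{euclideanheisenberg}: one must keep track that the constant is attached to a fixed compact set containing both $K$ and the image of conjugation, and that the uniform Lipschitz bound for conjugation on compacts is legitimate — both are routine once the polynomiality of the group law is invoked. An alternative, more computational route bypasses the reduction to a compact set: writing $x=\exp X$, $y=\exp Y$ one has $x^{-1}yx=\exp\!\big(e^{-\mathrm{ad}_X}Y\big)=\exp\!\big(\sum_{k=0}^{s-1}\tfrac{(-1)^k}{k!}(\mathrm{ad}_X)^kY\big)$, and one estimates the layer components of this element against the homogeneous box norm $\sum_i|\cdot^{(i)}|^{1/i}\asymp d(\cdot)$; I expect the scaling argument above to be shorter.
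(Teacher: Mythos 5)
The paper itself does not prove this proposition; it quotes it as \cite[Lemma~2.13]{FS}, so there is no in-paper argument to compare against. Your proof is a correct independent derivation. The homogeneity reduction is legitimate (dilations are automorphisms with $d(\delta_\lambda z)=\lambda d(z)$, so both sides scale linearly), the normalization to $d(\bar x)+d(\bar y)=1$ places everything in the compact CC unit ball, and the chain $d(\bar x^{-1}\bar y\bar x)\le C_{\mathrm H}|\bar x^{-1}\bar y\bar x|^{1/s}\le C_{\mathrm H}(L|\bar y|)^{1/s}\le C_{\mathrm H}(LC_{\mathrm H})^{1/s}d(\bar y)^{1/s}$ correctly uses both halves of Proposition~\ref{euclideanheisenberg} on a fixed compact set containing both $K$ and its image under conjugation. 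Scaling back and using $(a+b)^{(s-1)/s}\le a^{(s-1)/s}+b^{(s-1)/s}$ gives a bound that is even slightly tighter than the stated one (you drop the $d(x)^{1/s}d(y)^{(s-1)/s}$ term, which is anyway dominated by $\max\{d(y),\,d(x)^{(s-1)/s}d(y)^{1/s}\}$), so the claimed estimate follows. This is a genuinely different route from the reference: \cite{FS} (and the alternative you sketch at the end) work directly with the $\mathrm{Ad}$-expansion $x^{-1}yx=\exp(e^{-\mathrm{ad}_X}Y)$ and estimate homogeneous components layer by layer, which yields the precise three-term shape of the bound; your dilation-plus-compactness argument trades that explicit bookkeeping for a soft constant and is indeed shorter.

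One small point to tighten: the inequality $|\bar x^{-1}\bar y\bar x-\bar x^{-1}0\bar x|\le L|\bar y|$ via bounded derivatives implicitly applies the mean value inequality along the Euclidean segment from $0$ to $\bar y$, which need not stay inside the CC ball $K$. You should bound the $\bar y$-derivative on $K\times\widehat K$ for some convex compact $\widehat K\supset K$ (e.g.\ a large Euclidean ball, or $\mathrm{conv}(K)$) so that the segment is contained in the region where the derivative bound holds. This is a cosmetic fix and does not affect the argument.
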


\begin{definition}\label{dilations}
For any $\lambda>0$, we define the \emph{dilation} $\delta_{\lambda}\colon \bbG \to \bbG$ in coordinates by
\[\delta_{\lambda}(x_{1}, \ldots, x_{n})=(\lambda^{\alpha_{1}}x_{1},\ldots, \lambda^{\alpha_{n}}x_{n})\]
where $\alpha_{i}\in \bbN$ is the homogeneity of the variable $x_{i}$, which is defined by
\[\alpha_{j}=i \qquad \mbox{whenever} \qquad h_{i-1}+1<j\leq h_{i},\]
where $h_{i}:=\dim(V_{1}) + \ldots \dim(V_{i})$ for $i\geq 1$ and $h_{0}:=0$. For our purposes, it will be enough to know that $\alpha_{1}=\cdots=\alpha_{r}=1$, where $r=\dim(V_{1})$.
\end{definition}

Dilations are group homomorphisms of $\bbG$ and they satisfy
\[d(\delta_{\lambda}(x),\delta_{\lambda}(y))=\lambda d(x,y) \quad \mbox{for every }x, y \in \bbG \mbox{ and }\lambda>0.\]
We will also use the fact that $\delta_{\lambda}(\exp(E))=\exp (\lambda E)$ for every $\lambda >0$ and $E\in V_{1}$.

Carnot groups have a Haar measure which is unique up to scalars. When $\bbG$ is represented in first exponential coordinates as $\bbR^{n}$, the Haar measure is simply the Lebesgue measure $\mathcal{L}^{n}$, which satisfies
\[\mathcal{L}^{n}(gA)=\mathcal{L}^{n}(A) \qquad \mbox{and}\qquad \mathcal{L}^{n}(\delta_{\lambda}(A))=\lambda^{Q}\mathcal{L}^{n}(A)\]
for every $g\in \bbG$, $\lambda>0$ and $A\subset \bbG$ measurable. Here $Q:=\sum_{i=1}^{s}i\dim(V_{i})$ is the \emph{homogeneous dimension} of $\bbG$, which is also the Hausdorff dimension of $\bbG$ with respect to the CC metric.

\subsection{Differentiability in Carnot groups}

\begin{definition}\label{defdirectionalderivative}
Let $f\colon \bbG \to \bbR$ be a Lipschitz function, $x\in \bbG$ and $E\in V_{1}$. The \emph{directional derivative of $f$ at $x$ in direction $E$} is defined by
\[Ef(x):=\lim_{t\to 0} \frac{f(x\exp(tE))-f(x)}{t},\]
whenever the limit exists.
\end{definition}

Pansu defined the notion of differentiability in Carnot groups and proved a Rademacher theorem for maps between general Carnot groups \cite{Pan89}. We will only be concerned with the case where the target is $\bbR$.

\begin{definition}\label{pansudifferentiability}
A function $L\colon \bbG \to \bbR$ is \emph{$\bbG$-linear} if $L(xy)=L(x)+L(y)$ and $L(\delta_{r}(x))=rL(x)$ for all $x, y\in \bbG$ and $r>0$.

Let $f\colon \bbG\to \bbR$ and $x\in \bbG$. We say that $f$ is \emph{differentiable at $x$} if there is a $\bbG$-linear map $L \colon \bbG\to \bbR$ such that
\[\lim_{y \to x} \frac{|f(y)-f(x)-L(x^{-1}y)|}{d(x,y)}=0.\]
In this case we say that $L$ is the \emph{Pansu differential} of $f$ at $x$.
\end{definition}

\begin{theorem}[Pansu]\label{pansutheorem}
Every Lipschitz function $f\colon \bbG \to \bbR$ is differentiable almost everywhere with respect to the Haar measure on $\bbG$.
\end{theorem}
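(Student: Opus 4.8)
The plan is to follow the standard martingale/Rademacher strategy adapted to the Carnot setting, reducing the full differentiability statement to two separate almost-everywhere facts: existence of all horizontal partial derivatives, and the upgrade from partial derivatives to full Pansu differentiability. First I would recall that along each horizontal line $t\mapsto x\exp(tX_j)$ for $X_j$ in the fixed basis of $V_1$, the function $f$ restricted to that line is Lipschitz on $\bbR$, hence differentiable $\cL^1$-a.e. by the classical Rademacher theorem in dimension one; a Fubini-type argument over the foliation of $\bbG$ by integral curves of $X_j$ (using that this foliation is, in exponential coordinates, a genuine family of smooth curves and that $\cL^n$ disintegrates nicely along it) then shows that $X_jf(x)$ exists for $\cL^n$-a.e.\ $x$, for each $j=1,\dots,r$. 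Let $N$ be the null set off which all $X_jf(x)$ exist simultaneously.

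The heart of the matter is to show that at a.e.\ such $x$ the candidate $\bbG$-linear map $L(y):=\sum_{j=1}^r (X_jf)(x)\,y_j$ (where $y_j$ are the first $r$ exponential coordinates of $y$) actually realizes the Pansu differential, i.e. that $|f(y)-f(x)-L(x^{-1}y)|=o(d(x,y))$. Here I would use the dilation structure: set $f_\lambda(y):=\lambda^{-1}\big(f(x\delta_\lambda(y))-f(x)\big)$; these are uniformly Lipschitz with respect to the CC metric (since dilations scale $d$ by $\lambda$ and left translations are isometries), so by Arzelà–Ascoli every sequence $\lambda_k\to 0$ has a subsequence along which $f_{\lambda_k}$ converges locally uniformly to some CC-Lipschitz limit $g$. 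One checks that any such blow-up limit $g$ is \emph{horizontally affine}: its horizontal derivatives exist everywhere and are constant (equal to $X_jf(x)$), because the difference quotients defining them are limits of difference quotients of $f$ at points near $x$, and at a.e.\ $x$ one can control these via a Lebesgue-point argument on the maps $y\mapsto X_jf(y)$. A horizontally affine CC-Lipschitz function on $\bbG$ must be $\bbG$-linear (this uses Chow connectivity: integrating along horizontal broken lines determines $g$ from its horizontal derivatives), so $g=L$. Since the limit is independent of the subsequence, $f_\lambda\to L$ along the full family $\lambda\to 0$, which is precisely the asserted differentiability.

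The step I expect to be the main obstacle is making the blow-up argument genuinely uniform, i.e. passing from "$f_\lambda\to L$ pointwise/locally uniformly" to the quantitative statement $|f(y)-f(x)-L(x^{-1}y)|/d(x,y)\to 0$ as $y\to x$. The subtlety is that $y\to x$ is not the same as rescaling a fixed profile, so one must rule out that the error decays along some directions but not others; this is handled by combining the local-uniform convergence on the CC unit sphere (compactness of the sphere is essential) with the scaling $d(x,x\delta_\lambda(y))=\lambda d(0,y)$, but one has to be careful that the null set of bad $x$ does not depend on the direction. The cleanest route is to fix a countable dense set of directions, use a Lebesgue-density argument for each, and then propagate to all directions by equi-Lipschitzianity. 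A secondary technical point is verifying the Fubini disintegration in the first step: one needs that the flow of $X_j$, written in exponential coordinates, is a $C^1$ diffeomorphism preserving $\cL^n$ up to a positive bounded density, which follows from the polynomial form of the group law coming from the BCH formula \eqref{BCH}.

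Since this is a known theorem of Pansu, one could alternatively just cite \cite{Pan89}; the above is the outline one would reconstruct if a self-contained argument were wanted. In the paper it is stated only for future reference, so a citation-level proof suffices.
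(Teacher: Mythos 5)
The paper does not prove Theorem~\ref{pansutheorem}: it is stated purely as background, with the proof deferred to Pansu's original paper \cite{Pan89} (and the remark that it extends to Carnot and infinite-dimensional targets, citing \cite{MR,MPS17}). That matches your own conclusion that a citation-level proof suffices here, so at the level of what the paper actually does the two treatments agree.

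As for the sketch you offer in case a self-contained argument were wanted: it is a recognizable outline of the modern blow-up proof and the skeleton is sound, but the step you flag as the ``main obstacle'' is not quite where the real work lies. Once one knows every subsequential blow-up limit of $f_\lambda$ equals the same $\bbG$-linear map $L$, local uniform convergence $f_\lambda\to L$ on the closed CC unit ball already \emph{is} the quantitative differentiability statement, because any $y$ near $x$ can be written $y=x\delta_\lambda(z)$ with $\lambda=d(x,y)$ and $d(0,z)=1$; no extra ``propagation to all directions'' is needed, and the null set depends only on the finitely many basis fields $X_1,\dots,X_r$. The genuinely delicate point is the one you compress into ``a Lebesgue-point argument on $y\mapsto X_jf(y)$'': one must show that any blow-up limit $g$ has horizontal derivative $X_jg\equiv X_jf(x)$ \emph{everywhere}, which requires writing $\lambda^{-1}\big(f(x\delta_\lambda(y)\exp(\lambda tX_j))-f(x\delta_\lambda(y))\big)$ as an integral of $X_jf$ along a shifted fiber, and then showing this converges \emph{uniformly in $y$} on compacta using that $x$ is a Lebesgue point of $X_jf$ with respect to the Haar measure together with the equi-Lipschitz bound; an exchange of limits is hidden there and needs the averaged, not pointwise, control. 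Once that is done, your ``horizontally affine implies $\bbG$-linear'' step is clean in exponential coordinates of the first kind, since for any horizontal curve from $0$ to $p$ one has $\int u_j = p_j(p)$ independently of the path, so $g(p)=\sum_j X_jf(x)\,p_j(p)$. If you intend a self-contained writeup, the lemma to isolate and prove carefully is exactly that uniform Lebesgue-point convergence; everything else follows by standard compactness.
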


Note that Theorem \ref{pansutheorem} also holds for Carnot group targets \cite{Pan89} and even for suitable infinite dimensional targets \cite{MR, MPS17}.

\begin{definition}\label{defUDSabstract}
A set $N\subset \bbG$ is called a \emph{universal differentiability set (UDS)} if every Lipschitz map $f\colon \bbG \to \bbR$ is differentiable at a point of $N$.
\end{definition}

Theorem \ref{pansutheorem} implies that every positive measure subset of $\bbG$ is a UDS \cite{Mag01}. In Euclidean space $\bbR^{n}$ for $n>1$, where the group law is simply addition and the step is $1$, measure zero UDS exist and they can be made compact and of Hausdorff and Minkowski dimension one \cite{Pre90, DM12, DM14}. All step 2 Carnot groups contain a measure zero UDS of Hausdorff dimension one with respect to the CC metric \cite{PS16, LPS17}. Note that the Hausdorff dimension of any UDS must be at least one \cite{LPS17}.

Define the horizontal projection $p\colon \bbG\to \bbR^{r}$ by $p(x):=(x_{1},\ldots, x_{r})$. We now recall some relevant results from \cite{LPS17}.

\begin{lemma}\label{distanceinequality}
Let $u=\exp(E)$ for some $E\in V_{1}$. Then
\[d(uz) \geq d(u)+ \langle p(z), p(u)/d(u)\rangle \qquad \mbox{for any }z\in \bbG.\]
Moreover, if the CC distance $d\colon \bbG\to \bbR$ is differentiable at $u=\exp(E)$, then its Pansu differential at $u$ takes the form $z\mapsto \langle p(z), p(u)/d(u)\rangle$.
\end{lemma}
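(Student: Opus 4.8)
The plan is to route everything through the \emph{horizontal projection} $p\colon\bbG\to\bbR^{r}$, $p(x)=(x_{1},\dots,x_{r})$, and to reduce the whole statement to a one-variable projection estimate. I would first record two structural facts. Using the Baker--Campbell--Hausdorff formula \eqref{BCH}, the $V_{1}$-component of $X\diamond Y$ is $X_{V_{1}}+Y_{V_{1}}$ (all bracket terms land in $V_{2}\oplus\dots\oplus V_{s}$), so $p$ is a group homomorphism, $p(xy)=p(x)+p(y)$. Second, in exponential coordinates of the first kind the horizontal left-invariant vector fields have the triangular form $X_{j}=\partial_{x_{j}}+\sum_{k>r}(\text{polynomial})\,\partial_{x_{k}}$ for $j\le r$; hence for a horizontal curve $\gamma$ with $\gamma'(t)=\sum_{j\le r}u_{j}(t)X_{j}(\gamma(t))$ one has $(p\circ\gamma)'(t)=u(t)$ a.e., so $p(\gamma(b))-p(\gamma(a))=\int_{a}^{b}u$ and therefore $|p(\gamma(b))-p(\gamma(a))|\le L_{\bbG}(\gamma)$. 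Specialising to $u=\exp(E)$: on one side $p(u)=E$ (under $V_{1}\cong\bbR^{r}$) and the horizontal line $t\mapsto\exp(tE)$, $t\in[0,1]$, has length $\omega(E)=|p(u)|$, giving $d(u)\le|p(u)|$; on the other side the displayed inequality applied to a curve from $0$ to $u$ gives $d(u)\ge|p(u)|$. So $d(u)=|p(u)|$, which is exactly what lets $d(u)$ rather than $|p(u)|$ appear in the final formula.

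For the inequality itself, assume $E\neq0$ (otherwise the claim is vacuous) and set $e:=p(u)/d(u)$, a unit vector. For any horizontal curve $\gamma\colon[0,1]\to\bbG$ from $0$ to $uz$ we have $\int_{0}^{1}u=p(uz)=p(u)+p(z)$, so
\[
L_{\bbG}(\gamma)=\int_{0}^{1}|u|\ \ge\ \Big\langle\int_{0}^{1}u,\,e\Big\rangle=\langle p(u)+p(z),e\rangle=|p(u)|+\langle p(z),e\rangle=d(u)+\big\langle p(z),\tfrac{p(u)}{d(u)}\big\rangle.
\]
Taking the infimum over $\gamma$ gives $d(uz)\ge d(u)+\langle p(z),p(u)/d(u)\rangle$.

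For the ``moreover'' part, put $\phi(z):=\langle p(z),p(u)/d(u)\rangle$; since $p$ is a homomorphism and $\alpha_{1}=\dots=\alpha_{r}=1$, $\phi$ is $\bbG$-linear. If $d$ is Pansu differentiable at $u$ with differential $L$, take $y=uz$ with $z\to 0$; by left-invariance $d(u,uz)=d(z)$ and $u^{-1}y=z$, so $\big(d(uz)-d(u)-L(z)\big)/d(z)\to 0$. The inequality just proved gives $\big(\phi(z)-L(z)\big)/d(z)\le\big(d(uz)-d(u)-L(z)\big)/d(z)$, hence $\limsup_{z\to0}\psi(z)/d(z)\le0$ where $\psi:=\phi-L$ is $\bbG$-linear. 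Because $\psi(\delta_{r}z)=r\psi(z)$ and $d(\delta_{r}z)=rd(z)$, the quotient $\psi(z)/d(z)$ is constant along $z\mapsto\delta_{r}z$; letting $r\to0$ forces $\psi(z)\le0$ for every $z$, and applying this to $z^{-1}$ together with $\psi(z)+\psi(z^{-1})=\psi(0)=0$ yields $\psi\equiv0$, i.e.\ $L=\phi$.

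I do not expect a serious obstacle: the only delicate points are the triangular form of the horizontal vector fields in first-kind exponential coordinates — which is what makes the first $r$ components of $\gamma'(t)$ equal to the control $u(t)$ — and the accompanying identity $d(\exp E)=\omega(E)=|p(\exp E)|$; the remainder is a Cauchy--Schwarz-type projection bound together with a homogeneity/scaling argument.
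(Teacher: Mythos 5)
Your proof is correct and follows essentially the same route as the argument in \cite{LPS17} that the paper cites: project onto the horizontal layer via $p$, observe that $(p\circ\gamma)'$ equals the control so $|p(\gamma(b))-p(\gamma(a))|\leq L_{\bbG}(\gamma)$, use Cauchy--Schwarz against the unit vector $p(u)/d(u)$ for the inequality, and then use $\bbG$-linearity together with dilation-homogeneity and the involution $z\mapsto z^{-1}$ to pin down the Pansu differential. The only cosmetic flaw is the clash between $u=\exp(E)$ and the control function $u(t)$ in the line $\int_0^1 u=p(uz)=p(u)+p(z)$; renaming the control would remove the ambiguity.
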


Recall that $\omega$ is an inner product norm on $V_{1}$ making the basis $X_{1}, \ldots, X_{r}$ of $V_{1}$ orthonormal. We have the following connection between directional derivatives and the Lipschitz constant of a Lipschitz map.

\begin{lemma}\label{lipismaximal}
Let $f\colon \bbG \to \bbR$ be a Lipschitz map. Then
\[\mathrm{Lip}(f)=\sup\{|Ef(x)| \colon x\in \bbG, \, E\in V_{1}, \, \omega(E)=1, \, Ef(x) \mbox{ exists}\}.\]
\end{lemma}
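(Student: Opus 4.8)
The plan is to prove the two inequalities separately; write $S$ for the supremum on the right-hand side.

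The inequality $S\le\mathrm{Lip}(f)$ is immediate. If $E\in V_1$ with $\omega(E)=1$ and $Ef(x)$ exists, then the horizontal line $\tau\mapsto\exp(\tau tE)$, $\tau\in[0,1]$, has constant control of norm $\omega(tE)=|t|$, so $d(x\exp(tE),x)=d(\exp(tE))\le|t|$; hence $|Ef(x)|=\lim_{t\to0}|f(x\exp(tE))-f(x)|/|t|\le\mathrm{Lip}(f)\,\limsup_{t\to0}d(x\exp(tE),x)/|t|\le\mathrm{Lip}(f)$. Taking the supremum over admissible $(x,E)$ gives $S\le\mathrm{Lip}(f)$.

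For the reverse inequality I would fix $a\neq b$ and bound $|f(b)-f(a)|$ by integrating along a path from $a$ to $b$. The key fact I would use is that $d(a,b)$ equals the infimum of lengths of horizontal polygonal paths, i.e. finite concatenations $a=p_0,\dots,p_k=b$ with $p_i=p_{i-1}\exp(t_iE_i)$, $E_i\in V_1$, $\omega(E_i)=1$, and length $\sum_i|t_i|$; this is standard (piecewise-constant controls are $L^1$-dense and the endpoint map is $L^1$-continuous, with Chow's theorem closing the residual gap). On the $i$-th segment set $h_i(s):=f(p_{i-1}\exp(sE_i))$; since $d(\exp(sE_i),\exp(s'E_i))=d(\exp((s'-s)E_i))\le|s-s'|$, the function $h_i$ is Lipschitz on $\bbR$ with constant $\le\mathrm{Lip}(f)$, hence absolutely continuous, and at each point of differentiability the group identity $\exp(sE_i)\exp(\tau E_i)=\exp((s+\tau)E_i)$ together with the definition of directional derivative gives $h_i'(s)=E_if(p_{i-1}\exp(sE_i))$, so $|h_i'(s)|\le S$ for a.e.\ $s$. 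Then $|f(b)-f(a)|\le\sum_i|h_i(t_i)-h_i(0)|\le S\sum_i|t_i|$, and taking the infimum over polygonal paths yields $|f(b)-f(a)|\le S\,d(a,b)$ for all $a,b$, i.e.\ $\mathrm{Lip}(f)\le S$.

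The only point genuinely needing care is the reduction of the CC distance to polygonal paths; everything else is the elementary telescoping-plus-fundamental-theorem-of-calculus estimate above. An alternative that avoids this reduction is to take an arc-length parametrised near-geodesic horizontal curve $\gamma$ from $a$ to $b$ and show $(f\circ\gamma)'(t)=E_tf(\gamma(t))$ for a.e.\ $t$, where $E_t=\sum_ju_j(t)X_j$ with $\omega(E_t)=|u(t)|=1$; there the delicate step is proving $d(\gamma(t+h),\gamma(t)\exp(hE_t))=o(h)$ at almost every $t$ (the Lebesgue points of the control $u$), which follows from continuous dependence on the $L^1$ control of solutions of the horizontal ODE $\dot\gamma=\sum_ju_jX_j(\gamma)$. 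With either route in place, the remainder of the argument is routine.
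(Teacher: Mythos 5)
Your proof is correct, and since the paper only cites the earlier work of Le Donne, Pinamonti and Speight for this lemma without reproducing the argument, there is no in-paper proof to compare against line by line; your argument is the standard one. The inequality $S\le\mathrm{Lip}(f)$ is as you say immediate from $d(x,x\exp(tE))=|t|\,\omega(E)$ (the paper's Lemma on horizontal distances in fact gives equality). For the reverse inequality, your reduction to concatenations of horizontal line segments together with the one-dimensional fundamental theorem of calculus is the right skeleton: on a segment $s\mapsto p_{i-1}\exp(sE_i)$ the composed function $h_i$ is Lipschitz, and at a.e.\ $s$ the derivative $h_i'(s)$ exists and is, by the one-parameter group identity $\exp(sE_i)\exp(\tau E_i)=\exp((s+\tau)E_i)$, precisely $E_if$ evaluated along the segment, hence bounded by $S$. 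The one place where care is genuinely needed, and which you correctly flag, is that the CC distance is the infimum of lengths of \emph{polygonal} horizontal paths; to close the gap from the approximate endpoint to $b$ one needs not only Chow connectivity but a length bound on the connecting polygonal path, which is supplied by the constructive ball-box estimate (concatenations of horizontal lines of total length $\lesssim |b_n-b|^{1/s}$), not merely by Chow's theorem as stated. With that small caveat made explicit, the argument is complete. Your alternative route via the metric derivative along a near-geodesic is indeed more delicate, for exactly the reason you identify: one needs $d(\gamma(t+h),\gamma(t)\exp(hE_t))=o(h)$ at Lebesgue points of the control, and additionally the direction $\hat E_t$ varies with $t$ so the pointwise existence of $\hat E_tf(\gamma(t))$ is not automatic; sticking with the polygonal decomposition avoids this.
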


This justifies the definition of maximal directional derivative.

\begin{definition}\label{maximal}
Let $f\colon G\to \mathbb{R}$ be Lipschitz and let $E\in V_{1}$ with $\omega(E)=1$. We say that a directional derivative $Ef(x)$ is \emph{maximal} if $|Ef(x)|=\mathrm{Lip}(f)$. 
\end{definition}

In Euclidean spaces (and Banach spaces with a differentiable norm), maximality of a directional derivative suffices for differentiability. The following proposition from \cite{LPS17} gives a condition for `maximality implies differentiability' using the differentiability of the CC distance.

\begin{proposition}\label{equivalence}
Let $E\in V_{1}$ with $\omega(E)=1$. Then the following are equivalent.
\begin{enumerate}
\item The CC distance $d$ is differentiable at $\exp(E)$.
\item The following implication holds: whenever $f\colon \bbG \to \bbR$ is Lipschitz and $Ef(x)$ is maximal for some $x\in \bbG$, then $f$ is differentiable at $x$.
\end{enumerate}
\end{proposition}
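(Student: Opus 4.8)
\textbf{Proof plan for Proposition \ref{equivalence}.}

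The plan is to prove the two implications separately, exploiting the scaling structure of Carnot groups to pass between the CC distance and an arbitrary Lipschitz map.

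First I would prove $(2)\Rightarrow(1)$. This is the easy direction: the CC distance $d=d(\cdot,0)$ is itself a Lipschitz map $\bbG\to\bbR$ with $\mathrm{Lip}(d)=1$ (with respect to $\omega$, by Definition \ref{horizontalcurve} and Definition \ref{carnotdistance}). Along the horizontal line $t\mapsto \exp(tE)$ we have $d(\exp(tE))=|t|$ for $t$ near $0$ because $\omega(E)=1$; hence the directional derivative $Ed(\exp(E))$ exists and equals $1=\mathrm{Lip}(d)$, so $Ed(\exp(E))$ is maximal in the sense of Definition \ref{maximal}. Applying the implication in (2) with $f=d$ and $x=\exp(E)$ yields differentiability of $d$ at $\exp(E)$, which is (1).

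The substantive direction is $(1)\Rightarrow(2)$. Suppose $d$ is differentiable at $u:=\exp(E)$; by Lemma \ref{distanceinequality} its Pansu differential there is $z\mapsto\langle p(z),p(u)/d(u)\rangle=\langle p(z),p(u)\rangle$, since $d(u)=\omega(E)=1$. Now let $f\colon\bbG\to\bbR$ be Lipschitz with $Ef(x)$ maximal for some $x$; after replacing $f$ by $f/\mathrm{Lip}(f)$ (and possibly by $-f$) we may assume $\mathrm{Lip}(f)=1$ and $Ef(x)=1$. Set $g(y):=f(xy)-f(x)$, so $g$ is $1$-Lipschitz, $g(0)=0$, and $Eg(0)=1$. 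The candidate Pansu differential of $f$ at $x$ is the $\bbG$-linear map $L(z):=\langle p(z),p(u)\rangle$, and we must show $|g(y)-L(y)|=o(d(y))$ as $y\to0$. For the upper bound: since $g(\exp(tE))\to t$ with derivative $1$ at $t=0$ and $g$ is $1$-Lipschitz, one writes $g(y)=g(\exp(tE)^{-1}y\cdot\exp(tE))+g(\exp(tE))$-type comparisons — more precisely, using $1$-Lipschitzness, $g(y)\le g(\exp(tE)) + d(\exp(tE),y) = t+d(\exp(tE),y)$; choosing $t$ optimally and invoking the differentiability of $d$ at $u$ via its dilation-covariance $d(\delta_\lambda\exp(E),\delta_\lambda(\cdot))=\lambda d(\exp(E),\cdot)$ gives $g(y)\le L(y)+o(d(y))$. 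For the matching lower bound one applies the same estimate to the point $\exp(tE)$ in place of $0$, using that $Eg(0)=1=\mathrm{Lip}(g)$ forces $g(\exp(tE))=t+o(t)$, and the reverse triangle/Lipschitz inequality $g(y)\ge g(\exp(tE))-d(\exp(tE),y)$; combined with the inequality from Lemma \ref{distanceinequality}, namely $d(uz)\ge d(u)+\langle p(z),p(u)\rangle$ rescaled, this pins $g(y)$ from below by $L(y)-o(d(y))$.

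The main obstacle is the quantitative bookkeeping in the lower bound: one must choose the scale $t$ comparable to $d(y)$ and simultaneously control (i) the error in $g(\exp(tE))=t+o(t)$ coming only from maximality of the directional derivative, which is \emph{not} a priori uniform, and (ii) the error in the differentiability of $d$ at $u$ transported to scale $t$ by dilation. The clean way to organize this is to argue by contradiction: if differentiability of $f$ at $x$ failed, extract a sequence $y_k\to0$ with $|g(y_k)-L(y_k)|\ge c\,d(y_k)$, rescale by $\delta_{1/d(y_k)}$ so that $d(\tilde y_k)=1$, and use compactness of the unit CC sphere together with the $1$-Lipschitz bound to pass to a limit; the limiting inequality contradicts either the differentiability of $d$ at $u$ (Lemma \ref{distanceinequality}) or the maximality $Eg(0)=\mathrm{Lip}(g)$. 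This compactness/blow-up argument is what makes the non-uniformity of the directional-derivative limit harmless, and it is the key technical point to get right.
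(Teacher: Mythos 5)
The paper itself does not prove Proposition \ref{equivalence}; it cites it from \cite{LPS17}. So I will assess your argument on its own terms.

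Your $(2)\Rightarrow(1)$ direction is essentially right, modulo a small slip: you want $d(\exp(E)\exp(tE))=d(\exp((1+t)E))=1+t$ for small $t$, not ``$d(\exp(tE))=|t|$'' (the directional derivative is taken at $\exp(E)$, not at $0$). Since $d$ is $1$-Lipschitz, $Ed(\exp(E))=1=\mathrm{Lip}(d)$ is a maximal directional derivative and (2) applies. Fine.

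The $(1)\Rightarrow(2)$ direction, however, has a real gap in the lower bound, and it is precisely at the step you flagged as ``combined with the inequality from Lemma \ref{distanceinequality}.'' To extract a lower bound for $g(y)$ from the Lipschitz inequality $g(y)\ge g(\exp(tE))-d(\exp(tE),y)=g(\exp(tE))-d(\exp(-tE)y)$, you need an \emph{upper} bound on $d(\exp(-tE)y)$. Lemma \ref{distanceinequality}, applied with $u=\exp(-tE)$, gives $d(\exp(-tE)y)\ge t-\langle p(y),p(E)\rangle$, which is a \emph{lower} bound and points in exactly the wrong direction — plugging it in tells you nothing. Differentiability of $d$ at $\exp(E)$ also does not directly control $d(\exp(-tE)y)$: after dilating you would need differentiability of $d$ at $\exp(-E)$, which is neither assumed nor an obvious consequence (the inversion $\iota(x)=x^{-1}$ satisfies $d\circ\iota=d$ and is an isometry, but it is not $\bbG$-linear, so you cannot transport Pansu differentiability along it by a chain rule). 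Your blow-up framework does not cure this: the limit function $g_*$ is $1$-Lipschitz with $g_*(\exp(tE))=t$ and $g_*\le L$, but the inequality $g_*(z)\ge t-d(\exp(-tE)z)$ still faces the same sign problem, and the conjugation error $t\cdot o(t^{-1/s})$ you would get from trying to force everything through $\exp(E)$ does not vanish when $s>1$.

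What rescues the lower bound is a reflection trick that you do not mention. Set $\tilde g(z):=-g(z^{-1})$. Because $z\mapsto z^{-1}$ is a CC isometry, $\tilde g$ is again $1$-Lipschitz with $\tilde g(0)=0$, and $E\tilde g(0)=\lim_{t\to0}(-g(\exp(-tE))/t)=1=\mathrm{Lip}(\tilde g)$. Running your \emph{upper-bound} argument (which uses only differentiability of $d$ at $\exp(E)$) on $\tilde g$ gives $\tilde g(w)\le L(w)+o(d(w))$; substituting $w=y^{-1}$ and using $p(y^{-1})=-p(y)$, hence $L(y^{-1})=-L(y)$, and $d(y^{-1})=d(y)$, yields exactly the lower bound $g(y)\ge L(y)-o(d(y))$. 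With this insertion your compactness/blow-up organization of the quantitative bookkeeping is a perfectly reasonable way to finish, but as written the plan for the lower bound would fail.
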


Known constructions of measure zero UDS rely upon a stronger implication, namely that the existence of an \emph{almost maximal} directional derivative suffices for differentiability. To investigate this stronger implication, deformability as defined below will be important. First recall that two horizontal curves $f_{1}\colon [a,b]\to \bbG$ and $f_{2}\colon [b,c]\to \bbG$ with $f_{1}(b)=f_{2}(b)$ can be joined to form a horizontal curve $f\colon [a,c]\to \bbG$ given by $f(t)=f_{1}(t)$ if $a\leq t\leq b$ and $f(t)=f_{2}(t)$ if $b\leq t\leq c$. Similarly one can join any finite number of horizontal curves provided the end of each curve agrees with the start of the subsequent curve.

\begin{definition}\label{deform}
We say that $E\in V_{1}$ with $\omega(E)=1$ is \emph{deformable} if there exist $C_{E}$, $N_{E}$ and a map $\Delta_{E}\colon (0,\infty) \to (0, \infty)$ such that the following condition holds.

For every $0<s<1, \eta \in (0,\infty), 0<\Delta<\Delta_{E}(\eta)$ and $u\in \bbG$ with $d(u)\leq 1$, there is a Lipschitz horizontal curve $g\colon \bbR\to \bbG$ formed by joining at most $N_{E}$ horizontal lines such that
\begin{enumerate}
\item $g(t)=\exp(tE)$ for $|t|\geq s$,
\item $g(\zeta)=\delta_{\Delta s}(u)$, where $\zeta:= \langle \delta_{\Delta s}(u),E(0)\rangle$,
\item $\mathrm{Lip}_{\bbG}(g)\leq 1+\eta \Delta$,
\item $|(p\circ g)'(t)-p(E)|\leq C_{E}\Delta$ for all but finitely many $t\in \bbR$.
\end{enumerate}
\end{definition}

\begin{remark}\label{deform2}
Consider the restriction of the curve $g$ from Definition \ref{deform} to the interval $[-s,\zeta]$. By applying left translations and reparameterizing, we obtain a curve $\varphi\colon [0,s+\zeta]\to \bbG$ with $\varphi(0)=0$, $\varphi(s+\zeta)=\exp(sE)\delta_{\Delta s}(u)$, and satisfying conditions 3 and 4 of Definition \ref{deform}.
\end{remark}

\subsection{Useful facts about exponential coordinates of the first kind}

In the first $r$ coordinates, the group operation and dilations $\delta_{\lambda}$ read as
\[p(xy)=p(x)+p(y) \quad \mbox{and} \quad p(\delta_{\lambda}(x))=\lambda p(x) \quad \mbox{for }x,y\in \bbG \mbox{ and }\lambda>0.\]

Let $e_{1}, \ldots, e_{n}$ be the standard basis vectors of $\bbR^{n}$. If $1\leq j\leq r$, the basis element $X_{j}$ of $V_{1}\subset \mathfrak{g}$ can be written as
\begin{equation}\label{vfcoordinates}
X_{j}(x)=e_{j}+\sum_{i>r}^{n}q_{i,j}(x)e_{i},
\end{equation}
where $q_{i,j}$ are homogeneous polynomials, in particular, $q_{i,j}(0)=0$. Using \eqref{vfcoordinates}, it follows that $\exp (E)=E(0)$ for any $E\in V_{1}$. Thus points $u=\exp (E)$ for some $E\in V_{1}$ are exactly those of the form $u=(u_{h},0)$ for some $u_{h}\in \bbR^{r}$ and therefore $\exp(E)=(p(\exp(E)),0)$. If $E\in V_{1}$, it follows from \eqref{vfcoordinates} that $p(E(x))$ is independent of $x\in \bbG$. Hence one can unambiguously define $p(E)\in \bbR^{r}$ for every $E\in V_{1}$. The inner product norm $\omega$ is equivalently given by $\omega(E)=|p(E)|$. 

From Definition \ref{horizontalcurve} and \eqref{vfcoordinates} we notice that $L_{\bbG}(\gamma)=L_{\bbE}(p \circ \gamma)$, where $L_{\bbE}$ is the Euclidean length of a curve in $\bbR^{r}$. This implies that
\[d(x,y)\geq |p(y)-p(x)| \quad \mbox{for all} \quad x, y\in \bbG.\]

Lemma \ref{horizontaldistances} and Lemma \ref{lipschitzhorizontal} below give useful facts about length and distance in coordinates. They can be proved exactly as in \cite[Lemma 2.8 and Lemma 2.9]{PS16}.

\begin{lemma}\label{horizontaldistances}
If $E\in V_{1}$ then the following facts hold.
\begin{enumerate}
\item $|E(0)|=\omega(E)=d(E(0))$,
\item $d(x,x\exp(tE))=t\omega(E)$ for any $x\in \bbG$ and $t\in \bbR$.
\end{enumerate}
\end{lemma}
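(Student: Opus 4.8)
The plan is to derive both statements directly from the coordinate facts recorded above, following the scheme of \cite[Lemma 2.8]{PS16}.

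For part (1), I would first identify $E(0)$ with $\exp(E)$ via \eqref{vfcoordinates}, which also shows $\exp(E)=(p(E),0)$, using that $p(E(x))$ is independent of $x\in\bbG$; hence $|E(0)|=|(p(E),0)|=|p(E)|=\omega(E)$. It then remains to prove $d(E(0))=\omega(E)$, where $d(E(0))=d(E(0),0)$. Write $E=\sum_{j=1}^{r}a_{j}X_{j}$ with $p(E)=(a_{1},\dots,a_{r})$ and $\omega(E)=|(a_{1},\dots,a_{r})|$. For the upper bound I would use the horizontal line $\gamma(t)=\exp(tE)$, $t\in[0,1]$, which joins $0$ to $E(0)$: since $\gamma$ is the one-parameter subgroup generated by $E$, one has $\gamma'(t)=E(\gamma(t))=\sum_{j=1}^{r}a_{j}X_{j}(\gamma(t))$, so in Definition \ref{horizontalcurve} the controls are the constants $u_{j}\equiv a_{j}$ and $L_{\bbG}(\gamma)=\int_{0}^{1}|(a_{1},\dots,a_{r})|=\omega(E)$, giving $d(E(0))\leq\omega(E)$. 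For the lower bound I would invoke the inequality $d(x,y)\geq|p(y)-p(x)|$ already established above, which yields $d(E(0))=d(E(0),0)\geq|p(E(0))-p(0)|=|p(E)|=\omega(E)$. Combining the two bounds proves (1).

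For part (2), I would reduce to (1) by left-invariance of the CC distance together with the fact that $V_{1}$ is a vector subspace, so $tE\in V_{1}$ for every $t\in\bbR$. Indeed, $d(x,x\exp(tE))=d(0,\exp(tE))=d\big((tE)(0)\big)$, and applying (1) to the horizontal vector $tE$ gives $d\big((tE)(0)\big)=\omega(tE)=|t|\,\omega(E)$, which agrees with the stated formula for $t\geq 0$. (Alternatively, for $t>0$ one may use $\exp(tE)=\delta_{t}(\exp(E))$ and the $1$-homogeneity $d(\delta_{t}(y))=t\,d(y)$, treating $t<0$ by replacing $E$ with $-E$ and $t=0$ trivially.)

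The computation is routine; the only point requiring any care is the identification $\gamma'(t)=E(\gamma(t))$ for the one-parameter subgroup, i.e. that the horizontal line in direction $E$ has constant controls equal to $p(E)$. This is precisely what makes its $\bbG$-length equal to $\omega(E)$, rather than merely bounded by it, and it is the step that turns the two one-sided estimates into an equality.
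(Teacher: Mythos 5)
Your proof is correct and follows the standard scheme that the paper credits to \cite[Lemma 2.8]{PS16}: the coordinate identity $E(0)=\exp(E)=(p(E),0)$ gives $|E(0)|=\omega(E)$; the horizontal line $t\mapsto\exp(tE)$ on $[0,1]$, whose constant controls equal $p(E)$, gives the upper bound $d(E(0))\leq\omega(E)$; the projection inequality $d(x,y)\geq|p(y)-p(x)|$ (recorded just before the lemma) gives the matching lower bound; and part (2) reduces to part (1) by left-invariance. You also correctly note that the right-hand side in (2) should be $|t|\,\omega(E)$ rather than $t\,\omega(E)$, since the CC distance is nonnegative; the statement as printed implicitly assumes $t\geq 0$.
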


\begin{lemma}\label{lipschitzhorizontal}
Suppose that $\gamma \colon I \to \bbG$ is a horizontal curve. Then
\[\mathrm{Lip}_{\bbG}(\gamma) =  \mathrm{Lip}_{\bbE}(p \circ \gamma).\]
\end{lemma}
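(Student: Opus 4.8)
The plan is to deduce the equality from two facts already recorded in the excerpt: the length identity $L_{\bbG}(\gamma)=L_{\bbE}(p\circ\gamma)$ for horizontal curves (obtained from Definition \ref{horizontalcurve} and \eqref{vfcoordinates}, since $p$ annihilates the non-horizontal components of each $X_j$), and the pointwise bound $d(x,y)\geq|p(y)-p(x)|$. Each of these yields one of the two inequalities between $\mathrm{Lip}_{\bbG}(\gamma)$ and $\mathrm{Lip}_{\bbE}(p\circ\gamma)$, and the whole argument is exactly as in \cite[Lemma 2.9]{PS16}.

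For the inequality $\mathrm{Lip}_{\bbG}(\gamma)\leq\mathrm{Lip}_{\bbE}(p\circ\gamma)$, I would fix $s<t$ in $I$ and note that the restriction $\gamma|_{[s,t]}$ is itself a horizontal curve joining $\gamma(s)$ to $\gamma(t)$, so that
\[
d(\gamma(s),\gamma(t))\leq L_{\bbG}\big(\gamma|_{[s,t]}\big)=L_{\bbE}\big((p\circ\gamma)|_{[s,t]}\big)=\int_{s}^{t}\big|(p\circ\gamma)'(\tau)\big|\dd\tau\leq\mathrm{Lip}_{\bbE}(p\circ\gamma)\,(t-s).
\]
Taking the supremum over $s\neq t$ gives the claim. (If $\mathrm{Lip}_{\bbE}(p\circ\gamma)=+\infty$ there is nothing to prove; if it is finite then $p\circ\gamma$ is Lipschitz, hence absolutely continuous, which justifies expressing its Euclidean length as the integral of its speed.)

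For the reverse inequality $\mathrm{Lip}_{\bbE}(p\circ\gamma)\leq\mathrm{Lip}_{\bbG}(\gamma)$, I would simply apply $d(x,y)\geq|p(y)-p(x)|$ with $x=\gamma(s)$ and $y=\gamma(t)$ to obtain
\[
|p(\gamma(t))-p(\gamma(s))|\leq d(\gamma(s),\gamma(t))\leq\mathrm{Lip}_{\bbG}(\gamma)\,|t-s|,
\]
and again pass to the supremum. Combining the two inequalities gives $\mathrm{Lip}_{\bbG}(\gamma)=\mathrm{Lip}_{\bbE}(p\circ\gamma)$, with the identity understood in $[0,+\infty]$ when $\gamma$ is merely horizontal rather than Lipschitz. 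There is no substantive obstacle here; the only mild point is to phrase the length-as-speed-integral step so that it also covers the case in which $p\circ\gamma$ is a priori only absolutely continuous, which is handled by the parenthetical remark above.
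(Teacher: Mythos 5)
Your proof is correct and is essentially the argument the paper delegates to \cite[Lemma 2.9]{PS16}: one inequality comes from the length identity $L_{\bbG}(\gamma)=L_{\bbE}(p\circ\gamma)$ applied to each restriction $\gamma|_{[s,t]}$ together with the definition of $d$ as an infimum of lengths, and the other from $d(x,y)\geq |p(y)-p(x)|$. One small remark: the parenthetical about absolute continuity is unnecessary, since Definition \ref{horizontalcurve} already requires $\gamma$ (hence $p\circ\gamma$) to be absolutely continuous, so $L_{\bbE}(p\circ\gamma)=\int|(p\circ\gamma)'|$ holds without invoking finiteness of the Lipschitz constant.
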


The following lemma gives easy facts about the simplest $\bbG$-linear maps. It can be easily proved, e.g. as in \cite[Lemma 5.2]{PS16}.

\begin{lemma}\label{lemmascalarlip}
Suppose $E\in V_{1}$ with $\omega(E)=1$ and let $L\colon \bbG \to \bbR$ be the function defined by $L(x)=\langle x, E(0) \rangle$. Then the following facts hold.
\begin{enumerate}
\item $L$ is $\bbG$-linear and $\mathrm{Lip}_{\bbG}(L) = 1$,
\item for every $x\in\bbG$ and every $\tilde E\in V_{1}$ one has
\[\tilde{E}L(x)=L(\tilde{E}(0))=\langle p(\tilde{E}), p(E) \rangle.\]
\end{enumerate}
\end{lemma}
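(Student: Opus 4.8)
The plan is to first rewrite $L$ in terms of the horizontal projection $p$ and then reduce all three assertions to the coordinate identities collected in Section \ref{preliminaries}. Since $E\in V_{1}$, the facts recorded there give $\exp(E)=E(0)=(p(E),0)$ with $p(E)\in\bbR^{r}$, and $\omega(E)=|p(E)|=1$; consequently
\[L(x)=\langle x,E(0)\rangle=\langle p(x),p(E)\rangle\qquad\text{for every }x\in\bbG,\]
because $E(0)$ has vanishing entries beyond the first $r$ coordinates. Everything after this is elementary manipulation of this formula.

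For part (1), $\bbG$-linearity is immediate from $p(xy)=p(x)+p(y)$ and $p(\delta_{\lambda}(x))=\lambda p(x)$: indeed $L(xy)=\langle p(x)+p(y),p(E)\rangle=L(x)+L(y)$ and $L(\delta_{\lambda}(x))=\langle \lambda p(x),p(E)\rangle=\lambda L(x)$. For the Lipschitz constant I would first note that $\bbG$-linearity gives $L(x)-L(y)=L(y^{-1}x)$, and since $p$ is a group homomorphism onto $(\bbR^{r},+)$ we have $p(y^{-1}x)=p(x)-p(y)$; hence by the Cauchy--Schwarz inequality, $|p(E)|=1$, and the inequality $d(x,y)\ge|p(x)-p(y)|$ from Section \ref{preliminaries},
\[|L(x)-L(y)|=|\langle p(x)-p(y),p(E)\rangle|\le|p(x)-p(y)|\le d(x,y),\]
so $\mathrm{Lip}_{\bbG}(L)\le 1$. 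For the reverse inequality I would test on a single horizontal line: by Lemma \ref{horizontaldistances}, $d(0,\exp(tE))=t\,\omega(E)=t$ for $t>0$, while $L(\exp(tE))-L(0)=\langle t\,p(E),p(E)\rangle=t$, so the difference quotient equals $1$ and therefore $\mathrm{Lip}_{\bbG}(L)=1$.

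For part (2), fix $x\in\bbG$ and $\tilde E\in V_{1}$. Using $\bbG$-linearity, $L(x\exp(t\tilde E))-L(x)=L(\exp(t\tilde E))$ for all $t$, so the difference quotient defining $\tilde E L(x)$ does not depend on $x$. Since $t\tilde E\in V_{1}$ one has $\exp(t\tilde E)=(t\tilde E)(0)=t\,\tilde E(0)$, whence $L(\exp(t\tilde E))=\langle t\,\tilde E(0),E(0)\rangle=t\,L(\tilde E(0))$ is linear in $t$; dividing by $t$ and letting $t\to 0$ gives $\tilde E L(x)=L(\tilde E(0))$. Finally $E(0)=(p(E),0)$ and $\tilde E(0)=(p(\tilde E),0)$ yield $L(\tilde E(0))=\langle \tilde E(0),E(0)\rangle=\langle p(\tilde E),p(E)\rangle$, which completes the proof.

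This lemma is essentially bookkeeping, so there is no substantial obstacle; the only points requiring care are the identifications $\exp(F)=F(0)=(p(F),0)$ and $\omega(F)=|p(F)|$ for $F\in V_{1}$ (all recorded in Section \ref{preliminaries}), the observation that $p$ being a homomorphism forces $p(y^{-1}x)=p(x)-p(y)$, and the choice of the horizontal line $t\mapsto\exp(tE)$ as the extremal test configuration for the lower bound on the Lipschitz constant.
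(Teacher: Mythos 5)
Your proof is correct and proceeds exactly along the lines the paper indicates (it cites the analogous Lemma 5.2 of \cite{PS16} without reproducing the argument): rewrite $L(x)=\langle p(x),p(E)\rangle$ via the coordinate facts of Section \ref{preliminaries}, get $\bbG$-linearity and $\mathrm{Lip}_{\bbG}(L)\le 1$ from $p$ being a homomorphism together with $d(x,y)\ge|p(x)-p(y)|$, realize the bound along $t\mapsto\exp(tE)$ using Lemma \ref{horizontaldistances}, and observe that the difference quotient in (2) is constant in $t$ by $\bbG$-linearity. No gaps; this is the standard bookkeeping argument the authors had in mind.
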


\subsection{Free Carnot groups and model filiform groups}

Recall that a homomorphism between Lie algebras is a linear map that preserves the Lie bracket, while isomorphisms are bijective homomorphisms. Free-nilpotent Lie algebras are then defined as follows (e.g. \cite[Definition 14.1.1]{BLU07}).

\begin{definition}\label{freeliealgebra}
Let $r\geq 2$ and $s\geq 1$ be integers. We say that $\mathcal{F}_{r,s}$ is the \emph{free-nilpotent Lie algebra} with $r$ \emph{generators} $x_1, \ldots, x_{r}$ of \emph{step} $s$ if:
\begin{enumerate}
\item $\mathcal{F}_{r,s}$ is a Lie algebra generated by elements $x_1, \ldots, x_r$,
\item $\mathcal{F}_{r,s}$ is nilpotent of step $s$ (i.e., nested Lie brackets of length $s+1$ are $0$),
\item for every Lie algebra $\mathfrak{g}$ that is nilpotent of step $s$ and for every map $\Phi\colon \{x_1, \ldots, x_r\}\to \mathfrak{g}$, there is a unique homomorphism of Lie algebras $\tilde{\Phi}\colon \mathcal{F}_{r,s} \to \mathfrak{g}$ that extends $\Phi$.
\end{enumerate}
\end{definition}

We next define free Carnot groups, e.g. \cite[Definition 14.1.3]{BLU07}.

\begin{definition}\label{freecarnotgroup}
The \emph{free Carnot group} with rank $r$ and step $s$ is the Carnot group whose Lie algebra is isomorphic to the free-nilpotent Lie algebra $\mathcal{F}_{r, s}$. We denote it by $\bbF_{r,s}$.
\end{definition}

By saying that two Carnot groups are isomorphic we simply mean that they are isomorphic as Lie groups, with an isomorphism that preserves the stratification. Since Carnot groups are simply connected Lie groups, any homomorphism $\phi$ between their Lie algebras lifts to a Lie group homomorphism $F$ between the Carnot groups satisfying $dF=\phi$.

Intuitively, model filiform groups are the Carnot groups with the simplest Lie brackets possible while still having arbitrarily large step. The formal definition is as follows.

\begin{definition}\label{filiform}
Let $n\geq 2$. The \emph{model filiform group of step $n-1$} is the Carnot group $\mathbb{E}_{n}$ whose Lie algebra $\mathcal{E}_{n}$ admits a basis $X_{1}, \ldots, X_{n}$ for which the only non-vanishing bracket relations are given by $[X_{i},X_{1}]=X_{i+1}$ for $1<i<n$.

The stratification of $\mathcal{E}_{n}$ is $\mathcal{E}_{n}=V_{1}\oplus \cdots \oplus V_{n-1}$ with $V_{1}=\mathrm{Span}\{X_{1}, X_{2}\}$ and $V_{i}=\mathrm{Span}\{X_{i-1}\}$ for $1<i<n$.
\end{definition}

Throughout the paper, we will view the model Filiform group $\bbE_{n}$ in first exponential coordinates as $\bbR^{n}$ with group operation obtained from the Lie brackets by the BCH formula \eqref{BCH}.

\section{Differentiability of the CC distance in Carnot groups}\label{CCdifferentiability}

In this section we investigate the differentiability of the CC distance at endpoints of horizontal vectors. By Proposition \ref{equivalence}, this is equivalent to the implication `maximality implies differentiability'.

\subsection{Deformability implies differentiability of the CC distance}

We first observe that if $E$ is a deformable direction then the CC distance is differentiable at $\exp(E)$. At present we do not know whether the converse holds.

\begin{proposition}\label{deformimpliesdiff}
Suppose $\bbG$ is a Carnot group and let $E\in V_{1}$ with $\omega(E)=1$ be deformable. Then the CC distance is differentiable at $\exp(E)$.
\end{proposition}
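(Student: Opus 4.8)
The goal is to show that if $E \in V_1$ with $\omega(E)=1$ is deformable, then $d \colon \bbG \to \bbR$ is differentiable at $u := \exp(E)$. By Lemma \ref{distanceinequality} we already know that $d(uz) \geq d(u) + \langle p(z), p(u)/d(u)\rangle$ for every $z \in \bbG$, and that if $d$ is differentiable at $u$ then its Pansu differential must be $L(z) := \langle p(z), p(u)/d(u)\rangle$. Note $d(u) = \omega(E) = 1$ by Lemma \ref{horizontaldistances}, and $p(u) = p(E)$, so in fact $L(z) = \langle p(z), p(E)\rangle$. Thus the plan is: fix this candidate $\bbG$-linear map $L$ (which is $\bbG$-linear with $\mathrm{Lip}_\bbG(L)=1$ by Lemma \ref{lemmascalarlip}), and show
\[
\lim_{z \to 0} \frac{d(uz) - d(u) - L(z)}{d(0,uz)} = 0 .
\]
The lower bound from Lemma \ref{distanceinequality} handles one side: $d(uz) - d(u) - L(z) \geq 0$. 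So the whole content is the matching \emph{upper} bound: given $\ep > 0$, for all $z$ with $d(z)$ small enough, $d(uz) \leq d(u) + L(z) + \ep\, d(uz)$, or equivalently $d(uz) - 1 - \langle p(z), p(E)\rangle \leq \ep\, d(0,uz)$.

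The key step — and the main obstacle — is to use deformability to construct a short horizontal path from $0$ to $uz$ whose length is at most $1 + \langle p(z), p(E)\rangle + \ep\, d(0,uz)$. Here is how I would set it up. Write $d(z) = \rho$, which we take small. Scale: set $z = \delta_\rho(w)$ where $d(w) = 1$, and apply the deformability definition with $u$ replaced by $w$, with $\Delta$ comparable to $\rho$ (so that $\Delta s$ matches the scale of $z$), with $s$ near $1$, and with $\eta$ chosen small depending on $\ep$. Concretely, deformability (via Remark \ref{deform2}) gives a curve $\varphi \colon [0, s+\zeta] \to \bbG$ with $\varphi(0) = 0$, $\varphi(s+\zeta) = \exp(sE)\,\delta_{\Delta s}(w)$, with $\mathrm{Lip}_\bbG(\varphi) \leq 1 + \eta\Delta$ and $|(p\circ\varphi)'(t) - p(E)| \leq C_E \Delta$. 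I would choose the parameters so that $\exp(sE)\delta_{\Delta s}(w)$ is exactly $uz = \exp(E)z$ — this pins down $s$ and $\Delta$ in terms of $\rho$ (note $s$ will be slightly less than $1$ and $\Delta s \approx \rho$; one needs $\exp(sE)\exp((1-s)E)\delta_{\Delta s}(w) = \exp(E)\delta_\rho(w)$, forcing $\delta_{\Delta s}(w)$ to absorb both $\exp((1-s)E)$ and the dilation mismatch, which is a routine but slightly fiddly computation using that $p$ is additive and homogeneous). Alternatively, and more cleanly, one may first reduce to the case $z = \delta_\rho(w)$ with $p(w)$ orthogonal to $p(E)$, or simply not insist $\varphi$ ends exactly at $uz$ but at a point within $o(\rho)$ of $uz$ and correct with a short segment; I expect the cleanest route is to let the deformability data produce the endpoint and then verify it equals $uz$ for the right parameter choice.

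Once the curve $\varphi$ is in hand, the length estimate is essentially linear. We have
\[
d(0, uz) \leq L_\bbG(\varphi) = L_\bbE(p\circ\varphi) = \int_0^{s+\zeta} |(p\circ\varphi)'(t)|\dd t
\]
by Lemma \ref{lipschitzhorizontal} and the identity $L_\bbG = L_\bbE \circ p$. Now write $(p\circ\varphi)'(t) = p(E) + r(t)$ with $|r(t)| \leq C_E\Delta$, so $|(p\circ\varphi)'(t)| \leq |p(E)| + \langle (p\circ\varphi)'(t), p(E)\rangle/|p(E)| + O(|r(t)|^2/|p(E)|)$... actually more directly: $|(p\circ\varphi)'(t)| \leq \langle (p\circ\varphi)'(t), p(E)\rangle + \tfrac{1}{2}|r(t)|^2$ when $|p(E)|=1$ and one expands $|p(E)+r|$; integrating, $L_\bbE(p\circ\varphi) \leq \langle p(\varphi(s+\zeta)) - p(\varphi(0)), p(E)\rangle + \tfrac12 C_E^2 \Delta^2 (s+\zeta)$. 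The first term is $\langle p(uz), p(E)\rangle = \langle p(u), p(E)\rangle + \langle p(z), p(E)\rangle = 1 + L(z)$ since $p(u) = p(E)$ and $|p(E)|=1$. The error term is $O(\Delta^2) = O(\rho^2) = O(d(z)^2)$, which is $o(d(0,uz))$ because $d(0,uz) \geq |p(uz) - p(0)| \geq 1 - |p(z)| \geq 1/2$ for $z$ small (so $d(0,uz)$ is bounded below!). Wait — that last point is crucial and pleasant: since $u$ is a unit distance from $0$ and $z \to 0$, $d(0,uz) \to 1$, so it suffices to show the numerator is $o(d(z))$, hence $o(1)$ relative to $\ep$; and $O(\Delta^2) = O(d(z)^2) = o(d(z))$. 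Also one must account for the $\eta\Delta$ contribution to $\mathrm{Lip}_\bbG(\varphi)$: if I bound $L_\bbG(\varphi) \leq (1+\eta\Delta)(s+\zeta)$ via the Lipschitz bound instead, that gives a cruder estimate; better to use the pointwise derivative bound (4) as above and the fact that $\varphi$ is a concatenation of finitely many horizontal lines so $(p\circ\varphi)'$ exists a.e. Putting the pieces together: $d(0,uz) - 1 - L(z) \leq \tfrac12 C_E^2 \Delta^2(s+\zeta) = O(d(z)^2)$, which combined with the lower bound $d(uz) - 1 - L(z) \geq 0$ and $d(0,uz) \geq 1/2$ gives $|d(uz) - d(u) - L(uz)\text{-stuff}|/d(0,uz) \to 0$. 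The main obstacle, as flagged, is the bookkeeping matching the deformability parameters $(s, \Delta, \eta)$ to $z$ so that the constructed curve genuinely ends at $uz$; everything after that is a one-line Taylor expansion of the Euclidean norm plus Lemma \ref{lipschitzhorizontal} and Lemma \ref{distanceinequality}.
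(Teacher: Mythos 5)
Your plan has one genuine conceptual error that makes the argument, as written, fail to prove differentiability, plus a bookkeeping worry that is easy to dissolve. It also takes a slightly heavier route than the paper.

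\textbf{The denominator is wrong.} You write the differentiability criterion as
\[
\lim_{z\to 0} \frac{d(uz)-d(u)-L(z)}{d(0,uz)}=0,
\]
but by Definition \ref{pansudifferentiability} (with $x=u$, $y=uz$, so $x^{-1}y=z$) the denominator is $d(u,uz)=d(z)$, not $d(0,uz)$. Since $d(0,uz)\to 1$ while the numerator is a continuous function of $z$ vanishing at $z=0$, the ratio you wrote tends to $0$ for \emph{any} continuous $f$ with $f(u)-f(u)-L(0)=0$; so the claim ``it suffices to show the numerator is $o(1)$'' is vacuous and cannot possibly prove differentiability. You do subsequently say that the numerator should be $o(d(z))$, which is the right target, but you reach it by an invalid chain of reasoning (``$d(0,uz)\to 1$, so \dots $o(1)$''). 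The fix is simply to use $d(z)$ in the denominator; then your estimate that the numerator is $O(\Delta^2)=O(d(z)^2)=o(d(z))$ does close the argument.

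\textbf{The endpoint-matching worry is unnecessary.} You need not force $\varphi(s+\zeta)=uz$ nor pick parameters so that $\exp(sE)\delta_{\Delta s}(w)=\exp(E)z$. Instead fix $s$ (the paper takes $s=1/2$), write $z=\delta_{\Delta s}(u)$ with $d(u)=1$ and $\Delta=d(z)/s$, and use the full curve $g$ from Definition \ref{deform} on the interval $[-1,\zeta]$: since $s<1$ one has $g(-1)=\exp(-E)$ and $g(\zeta)=z$, and by left-invariance $d(\exp(E)z)=d(\exp(-E),z)=d(g(-1),g(\zeta))$. So no left translation, reparameterization, or endpoint matching is needed at all. (Remark \ref{deform2} restricts to $[-s,\zeta]$ and so produces a curve ending at $\exp(sE)z$, not $\exp(E)z$; that is why your attempt to use $\varphi$ ran into trouble.)

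\textbf{Comparison with the paper's proof.} After those repairs your approach is valid but does more than necessary. The paper bounds $d(g(-1),g(\zeta))\leq \mathrm{Lip}_\bbG(g)\,|1+\zeta|\leq (1+\eta\Delta)(1+\langle z,E(0)\rangle)\leq 1+\langle z,E(0)\rangle+4\eta\,d(z)$, using only the Lipschitz control (condition 3 of Definition \ref{deform}); sending $\eta\to 0$ gives $o(d(z))$. You instead invoke the pointwise direction bound (condition 4), a Taylor expansion of the Euclidean norm, and Lemma \ref{lipschitzhorizontal} to extract $O(\Delta^2)=O(d(z)^2)$. That gives a sharper rate but needs more machinery; in particular it uses $C_E$ and the direction bound, which the paper reserves for the deeper arguments in Sections \ref{sectiondistanceestimate} and \ref{sectionUDS}. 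For this proposition the Lipschitz bound alone is both sufficient and cleaner.
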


\begin{proof}
First notice that Lemma \ref{distanceinequality} gives $d(\exp(E)z)\geq 1 +\langle \exp(E),z\rangle$ for any $z\in \mathbb{G}$. Hence it suffices to derive a suitable upper bound for $d(\exp(E)z)$. 

Let $\eta>0$ and $\Delta_{E}\colon (0,\infty)\to (0,\infty)$ be as in Definition \ref{deform}. Suppose $z\in \bbG$ satisfies $d(z)< \min(\Delta_{E}(\eta)/2, \ 1)$ and let $s=1/2$. Then we may choose $u\in \bbG$ with $d(u)=1$ and $0<\Delta<\Delta_{E}(\eta)$ so that $z=\delta_{\Delta s}(u)$. By applying Definition \ref{deform} with this choice of $\eta, s, u, \Delta$ we find a Lipschitz horizontal curve $g\colon \bbR \to \bbG$ satisfying $\mathrm{Lip}(g)\leq 1+\eta\Delta$, $g(-1)=\exp(-E)$ and $g(\zeta)=z$, where $\zeta=\langle z, E(0)\rangle$. Since $|\zeta|\leq 1$, we now estimate as follows:
\begin{align*}
d(\exp(E)z)=d(\exp(-E),z)&=d(g(-1),g(\zeta))\\
&\leq (1+\eta\Delta)|1+\zeta|\\
&\leq 1+\langle z, E(0)\rangle + 4\eta d(z)\\
&\leq 1+\langle z, E(0)\rangle + o(d(z)).
\end{align*}
Here $o(d(z))/d(z)\to 0$ as $z\to 0$, which follows because $2\eta\Delta/d(z)=2\eta\Delta/(\Delta s)=4\eta$ and $\eta$ can be made arbitrarily small by making $d(z)$ sufficiently small.
\end{proof}

\subsection{A strong example of non-differentiability of the CC distance}

The CC distance in the Engel group (which is the model filiform group $\bbE_{4}$) is not differentiable at $\exp(X_{2})$ \cite{LPS17}. In other words, the implication `maximality implies differentiability' fails for the direction $X_{2}$ in the Engel group. We now derive some consequences of this result for other Carnot groups.

Fix two Carnot groups $\bbG$ and $\bbH$ of rank $r$ which have horizontal layers $V$ and $W$ with the following property. There exist bases $\mathbf{X}=(X_{1}, \ldots, X_{r})$ and $\mathbf{Y}=(Y_{1}, \ldots, Y_{r})$ of $V$ and $W$ respectively, together with a Lie group homomorphism $F\colon \bbG \to \bbH$ such that $F_{*}(X_{i})=Y_{i}$ for $1\leq i\leq r$.

Equip $\bbG$ and $\bbH$ with the CC metrics $d_{\bbG}$ and $d_{\bbH}$ induced by the bases $\mathbf{X}$ and $\mathbf{Y}$ respectively. We view both $\bbG$ and $\bbH$ in exponential coordinates of the first kind and let $p_{\bbG}\colon \bbG\to \bbR^{r}$ and $p_{\bbH}\colon \bbH \to \bbR^{r}$ denote the horizontal projections. For any $u\in \bbG$ we have $p_{\bbH}(F(u))=p_{\bbG}(u)$. Also if $u=(u_{h},0)\in \bbG$ for some $u_{h}\in \bbR^{r}$, then $F(u)=(u_{h},0)\in \bbH$.

The following proposition was proven in \cite{LPS17}.

\begin{proposition}\label{quotientdiffCC}
Suppose the CC distance in $\bbG$ is differentiable at $\exp(E)$ for some $E\in V_{1}$. Then the CC distance in $\bbH$ is differentiable at $\exp(F_{*}(E))$.
\end{proposition}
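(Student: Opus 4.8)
The plan is to exploit the fact that the Lie group homomorphism $F\colon \bbG\to \bbH$ is $1$-Lipschitz with respect to the CC metrics and preserves horizontal projections, together with the explicit formula for the Pansu differential of the CC distance supplied by Lemma \ref{distanceinequality}. First I would record the basic metric property: since $F_{*}(X_{i})=Y_{i}$, the image under $F$ of a horizontal curve in $\bbG$ is a horizontal curve in $\bbH$ of the same $L^{1}$-control, hence $L_{\bbH}(F\circ\gamma)=L_{\bbG}(\gamma)$; taking infima over curves joining $0$ to a given point gives $d_{\bbH}(F(x))\le d_{\bbG}(x)$ for all $x\in\bbG$. On the other hand, since $F$ preserves horizontal projections, $|p_{\bbH}(F(x))|=|p_{\bbG}(x)|$, and combined with the universal lower bound $d(y)\ge|p(y)|$ from the discussion after \eqref{vfcoordinates} and the fact that $d_{\bbG}(\exp(E))=\omega(E)=|p(E)|$ (Lemma \ref{horizontaldistances}(1)), one checks that $d_{\bbH}(F(\exp(E)))=d_{\bbG}(\exp(E))=|p(E)|$ and, more importantly, that the whole line $t\mapsto\exp(tE)$ maps isometrically, so $F(\exp(E))=\exp(F_{*}(E))$ as expected from the coordinate description $F(u_{h},0)=(u_{h},0)$.

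Next I would write out what differentiability of $d_{\bbG}$ at $\exp(E)$ gives us. By Lemma \ref{distanceinequality}, the Pansu differential of $d_{\bbG}$ at $u:=\exp(E)$ is the $\bbG$-linear map $z\mapsto\langle p_{\bbG}(z),p_{\bbG}(u)/d_{\bbG}(u)\rangle$, so
\[
d_{\bbG}(uz)=d_{\bbG}(u)+\Big\langle p_{\bbG}(z),\frac{p_{\bbG}(u)}{d_{\bbG}(u)}\Big\rangle+o(d_{\bbG}(z))\qquad\text{as }z\to 0 .
\]
The same Lemma \ref{distanceinequality}, applied now in $\bbH$ at $\tilde u:=\exp(F_{*}(E))=F(u)$, already gives the matching lower bound
\[
d_{\bbH}(\tilde u w)\ge d_{\bbH}(\tilde u)+\Big\langle p_{\bbH}(w),\frac{p_{\bbH}(\tilde u)}{d_{\bbH}(\tilde u)}\Big\rangle\qquad\text{for all }w\in\bbH ,
\]
so to prove differentiability of $d_{\bbH}$ at $\tilde u$ (with Pansu differential $w\mapsto\langle p_{\bbH}(w),p_{\bbH}(\tilde u)/d_{\bbH}(\tilde u)\rangle$) it suffices to establish the complementary upper bound $d_{\bbH}(\tilde u w)\le d_{\bbH}(\tilde u)+\langle p_{\bbH}(w),p_{\bbH}(\tilde u)/d_{\bbH}(\tilde u)\rangle+o(d_{\bbH}(w))$.

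For the upper bound I would lift $w$ through $F$. The main obstacle is that $F$ need not be surjective onto a neighbourhood basis, nor isometric, so a generic small $w\in\bbH$ need not be $F(z)$ for a comparably small $z\in\bbG$; one must therefore be slightly careful. The clean way is to use a right inverse on the horizontal projection: given $w\in\bbH$ with $d_{\bbH}(w)$ small, choose $z\in\bbG$ with $p_{\bbG}(z)=p_{\bbH}(w)$ and $d_{\bbG}(z)$ controlled by $d_{\bbH}(w)$ — concretely, $z$ can be taken to be the endpoint of a near-geodesic in $\bbH$ from $0$ to $w$, lifted coordinate-by-coordinate in the horizontal controls to a horizontal curve in $\bbG$ whose $\bbG$-length equals the $\bbH$-length of the original, so $d_{\bbG}(z)\le d_{\bbH}(w)+\epsilon$ and automatically $p_{\bbG}(z)=p_{\bbH}(w)$. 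Then, using $F(uz)=\tilde u F(z)$, that $F$ is $1$-Lipschitz, and that $F$ maps $z$ to a point $F(z)$ with $p_{\bbH}(F(z))=p_{\bbG}(z)=p_{\bbH}(w)$,
\[
d_{\bbH}(\tilde u w)\le d_{\bbH}(\tilde u F(z))+d_{\bbH}(F(z),w)\le d_{\bbG}(uz)+d_{\bbH}(F(z),w).
\]
The first term is handled by the $\bbG$-expansion above (with $d_{\bbG}(z)\le d_{\bbH}(w)+\epsilon$ and $p_{\bbG}(z)=p_{\bbH}(w)$, noting $d_{\bbG}(u)=d_{\bbH}(\tilde u)$ and $p_{\bbG}(u)=p_{\bbH}(\tilde u)$), while the error term $d_{\bbH}(F(z),w)$ must be shown to be $o(d_{\bbH}(w))$ — this, and the interplay of the two $o(\cdot)$ quantities measured in the two different metrics, is the one point requiring genuine care; it follows because $F(z)$ and $w$ share the same horizontal projection and the same CC distance to the origin up to $\epsilon$, so by homogeneity (dilating) both lie in an arbitrarily small CC-ball around the $\bbH$-geodesic, forcing $d_{\bbH}(F(z),w)\to 0$ faster than $d_{\bbH}(w)$ after rescaling; letting $\epsilon\to 0$ then closes the estimate. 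Combining the two bounds yields differentiability of $d_{\bbH}$ at $\exp(F_{*}(E))$ with the asserted differential. (Alternatively, and perhaps more robustly, one can bypass the lifting difficulty entirely by phrasing everything through the Pansu differential: $d_{\bbH}\circ F$ is $1$-Lipschitz on $\bbG$, agrees with $d_{\bbH}(\exp(F_*E))+\langle p_{\bbG}(\cdot),p_{\bbH}(\tilde u)/d_{\bbH}(\tilde u)\rangle$ to first order along horizontal directions by the lower bound and the $\bbG$-differentiability, and the shared $p$-projection upgrades this to full Pansu differentiability in $\bbH$.)
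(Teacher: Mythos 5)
The paper does not prove Proposition \ref{quotientdiffCC}; it cites \cite{LPS17}, so there is no in-paper proof to compare against. Your overall strategy — lift a near-geodesic of $\bbH$ through $F$ into $\bbG$ and combine the one-sided bound from Lemma \ref{distanceinequality} in $\bbH$ with the two-sided bound from differentiability of $d_{\bbG}$ at $\exp(E)$ — is the natural one and almost certainly the one in \cite{LPS17}.

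There is, however, a genuine gap in the step you yourself flag as delicate. You try to argue that $d_{\bbH}(F(z),w)=o(d_{\bbH}(w))$ on the grounds that $F(z)$ and $w$ ``share the same horizontal projection and the same CC distance to the origin up to $\epsilon$,'' and therefore both lie near the geodesic after dilation. This inference is false: two points of a Carnot group with the same horizontal projection and the same CC distance to the origin can be uniformly far apart (e.g.\ in the Heisenberg group, $(a,b,t)$ and $(a,b,-t)$). Sharing $p_{\bbH}$ and $d_{\bbH}(\cdot,0)$ does not pin down a point.

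The good news is that the error term you were trying to control is identically zero, so the argument survives once you notice the correct reason. Your lift is constructed by taking a horizontal curve $\gamma\colon[0,1]\to\bbH$ from $0$ to $w$ with controls $u_{j}$, and defining $\tilde\gamma\colon[0,1]\to\bbG$ with $\tilde\gamma(0)=0$ and $\tilde\gamma'=\sum_{j}u_{j}X_{j}(\tilde\gamma)$. Because $F_{*}(X_{j})=Y_{j}$, the curve $F\circ\tilde\gamma$ satisfies $(F\circ\tilde\gamma)'=\sum_{j}u_{j}Y_{j}(F\circ\tilde\gamma)$ with $F(\tilde\gamma(0))=0$; by uniqueness of solutions to this ODE, $F\circ\tilde\gamma=\gamma$, and in particular $F(z)=w$ exactly, where $z:=\tilde\gamma(1)$. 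So $d_{\bbH}(\tilde u w)=d_{\bbH}(F(uz))\le d_{\bbG}(uz)$ directly, with no extra additive term. The remaining bookkeeping — choosing $\epsilon\le d_{\bbH}(w)$ so that $d_{\bbG}(z)\le 2d_{\bbH}(w)$, feeding this into the $\bbG$-differentiability estimate, and matching it against the lower bound from Lemma \ref{distanceinequality} in $\bbH$ — is exactly as you wrote it. The closing parenthetical ``alternative'' is too vague to count as a second argument: asserting that agreement to first order along horizontal directions ``upgrades'' to full Pansu differentiability is precisely what needs to be proved. I would drop it and keep the lifting argument with the $F(z)=w$ observation.
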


Our first result about non-differentiability of the CC distance is the following.

\begin{proposition}\label{X2nogood}
In any model filiform group $\bbE_{n}$ with $n\geq 4$, the CC distance is not differentiable at $\exp(X_{2})$.
\end{proposition}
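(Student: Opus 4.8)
The plan is to reduce the statement to the already-established case of the Engel group $\bbE_4$, where the CC distance is not differentiable at $\exp(X_2)$ by \cite{LPS17}, and to transfer this via Proposition \ref{quotientdiffCC}. For $n=4$ there is nothing to prove, so fix $n\geq 5$.

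First I would produce a graded Lie group homomorphism $F\colon \bbE_n\to \bbE_4$ that is the identity on horizontal generators; equivalently (since Carnot groups are simply connected, so Lie algebra morphisms lift) a morphism of stratified Lie algebras $\phi\colon \cE_n\to \cE_4$ with $\phi(X_1)=X_1$ and $\phi(X_2)=X_2$. The natural choice is $\phi(X_i)=X_i$ for $1\leq i\leq 4$ and $\phi(X_i)=0$ for $i\geq 5$, which collapses the top $n-4$ one-dimensional layers of $\cE_n$; this realizes $\bbE_4$ as a graded quotient of $\bbE_n$. To verify that $\phi$ is a Lie algebra homomorphism one only needs to test the non-trivial relations $[X_i,X_1]=X_{i+1}$ for $1<i<n$: for $i=2,3$ both sides are sent to $X_{i+1}$ in $\cE_4$, while for $i\geq 4$ both sides vanish, since $[X_4,X_1]=0$ in $\cE_4$ and $\phi(X_{i+1})=0$; every other bracket of basis vectors is zero on both sides. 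Since $\phi$ maps each adapted basis vector of $\cE_n$ to the corresponding one of $\cE_4$ or to $0$, it preserves the stratifications, hence lifts to $F\colon \bbE_n\to \bbE_4$ with $F_{*}=\phi$, so that $F_{*}(X_1)=X_1$ and $F_{*}(X_2)=X_2$.

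This puts us in exactly the situation preceding Proposition \ref{quotientdiffCC}: $\bbG=\bbE_n$ and $\bbH=\bbE_4$ have common rank $r=2$, with horizontal bases $\mathbf{X}=(X_1,X_2)$ on both sides and each group equipped with the CC metric induced by its basis, and $F_{*}(X_i)=X_i$ for $i=1,2$. I then conclude by contraposition: if the CC distance on $\bbE_n$ were differentiable at $\exp(X_2)$, then Proposition \ref{quotientdiffCC} applied with $E=X_2\in V_1$ would give differentiability of the CC distance on $\bbE_4$ at $\exp(F_{*}(X_2))=\exp(X_2)$, contradicting \cite{LPS17}. Hence the CC distance on $\bbE_n$ is not differentiable at $\exp(X_2)$.

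The argument is essentially bookkeeping, and the only point that needs care is checking that $\phi$ is a well-defined morphism of stratified Lie algebras --- i.e. that annihilating the subspace spanned by $X_5,\ldots,X_n$ is consistent with every bracket relation in $\cE_n$ --- which the short computation above settles. Everything else is a direct invocation of Proposition \ref{quotientdiffCC} together with the known non-differentiability in the Engel group.
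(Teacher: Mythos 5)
Your proof is correct and takes essentially the same approach as the paper: construct the graded quotient map $\phi\colon\cE_n\to\cE_4$ by killing $X_5,\ldots,X_n$, lift it to a Lie group homomorphism $F\colon\bbE_n\to\bbE_4$, and invoke Proposition \ref{quotientdiffCC} together with the known non-differentiability at $\exp(X_2)$ in the Engel group. The only cosmetic differences are that you treat $n=4$ as a separate trivial case and spell out the bracket verification in slightly more detail.
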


\begin{proof}
Let $X_{1},\ldots, X_{n}$ be a basis of the Lie algebra $\mathcal{E}_{n}$ such that the only non-vanishing bracket relations are given by $[X_{i}, X_{1}]=X_{i+1}$ for $1<i<n$. Let $Y_{1}, \ldots, Y_{4}$ denote a similar basis for the Lie algebra $\mathcal{E}_{4}$. Define a linear map $\Phi\colon \mathcal{E}_{n}\to \mathcal{E}_{4}$ by $\Phi(X_{i})=Y_{i}$ for $1\leq i\leq 4$ and $\Phi(X_{i})=0$ for $i>4$. It is easy to see that $\Phi$ is a Lie algebra homomorphism, hence lifts to a Lie group homomorphism $F\colon \bbE_{n}\to \bbE_{4}$ satisfying $F_{*}(X_{i})=Y_{i}$ for $1\leq i\leq 2$. By \cite[Theorem 4.2]{LPS17}, the CC distance in $\bbE_{4}$ is not differentiable at $\exp(Y_{2})$. Hence, by Proposition \ref{quotientdiffCC}, the CC distance in $\bbE_{n}$ is not differentiable at $\exp(X_{2})$.
\end{proof}

Recall that $\bbE_{2}$ is just $\bbR^{2}$ and $\bbE_{3}$ is a Carnot group of step 2. Combining the results of \cite{LPS17} with Proposition \ref{deformimpliesdiff} and Proposition \ref{X2nogood} for $n\geq 4$, we obtain the following corollary.

\begin{corollary}\label{pmX2}
In the model filiform group $\bbE_{n}$, the directions $\pm X_{2}$ are deformable for $n=2$ and $n=3$. They are not deformable for $n\geq 4$.
\end{corollary}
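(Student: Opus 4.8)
The claim to prove is Corollary \ref{pmX2}: in $\bbE_n$, the directions $\pm X_2$ are deformable for $n=2,3$ and not deformable for $n\geq 4$.

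For the negative direction ($n\geq 4$), the strategy is immediate from what precedes. Proposition \ref{deformimpliesdiff} says that if a direction is deformable then the CC distance is differentiable at its exponential image. By Proposition \ref{X2nogood}, in $\bbE_n$ with $n\geq 4$ the CC distance fails to be differentiable at $\exp(X_2)$. Contrapositive: $X_2$ is not deformable. For $-X_2$, note $\exp(-X_2)$ is the image of $-X_2 \in V_1$, and running the same argument with the basis element $-X_2$ in place of $X_2$ (the map $\Phi$ in Proposition \ref{X2nogood} can equally be taken to send $X_2 \mapsto -Y_2$, or simply observe that $d(\exp(-X_2)) = d(\exp(X_2))$ and symmetry of the situation under $X_2 \mapsto -X_2$, which is a Lie algebra automorphism of $\mathcal E_n$), gives non-differentiability of $d$ at $\exp(-X_2)$ and hence non-deformability of $-X_2$.

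For the positive direction ($n=2,3$): when $n=2$, $\bbE_2 = \bbR^2$ is Euclidean, and when $n=3$, $\bbE_3$ is a step-two Carnot group (the Heisenberg group $\bbH^1$). In both cases \emph{every} horizontal direction is deformable — for step-two groups this was established in \cite{LPS17} (as remarked in the introduction, "All horizontal directions in step two Carnot groups are deformable"), and the Euclidean case $n=2$ is trivial (one may take $g$ to be the two-segment path from $\exp(-sE)$ to $z$ to $\exp(sE)$; conditions 1–4 follow from elementary planar geometry). In particular $\pm X_2$ are deformable in $\bbE_2$ and $\bbE_3$.

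The plan is thus to split into these two cases and cite the relevant prior results; there is essentially no obstacle, since all the real content is in Proposition \ref{deformimpliesdiff}, Proposition \ref{X2nogood}, and the step-two result of \cite{LPS17}. The only mild subtlety worth a sentence is handling the sign: checking that the argument for $X_2$ transfers to $-X_2$, which it does because $X_i \mapsto (-1)^i X_i$ (or the even simpler $X_1 \mapsto X_1$, $X_2 \mapsto -X_2$, $X_{i+1} \mapsto -[X_i,X_1]$-consistent rescaling) is a stratification-preserving Lie algebra automorphism of $\mathcal E_n$ carrying $X_2$ to $-X_2$, so deformability of $X_2$ and of $-X_2$ are equivalent, as are differentiability of $d$ at $\exp(X_2)$ and at $\exp(-X_2)$.

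\begin{proof}
We treat the two cases separately.

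\emph{Case $n=2$ or $n=3$.} When $n=2$, $\bbE_{2}=\bbR^{2}$ with the Euclidean structure, and when $n=3$, $\bbE_{3}$ is a Carnot group of step $2$. In \cite{LPS17} it was shown that every horizontal direction in a step $2$ Carnot group is deformable (this is implicit in the construction there; see also the discussion in the introduction). The Euclidean case $n=2$ is elementary: given $E\in V_{1}$ with $\omega(E)=1$, $0<s<1$, $\eta,\Delta>0$ and $z=\delta_{\Delta s}(u)$ with $d(u)\leq 1$, take $g$ to be the piecewise linear curve agreeing with $t\mapsto \exp(tE)$ for $|t|\geq s$ and passing through $z$ at the appropriate parameter, which is a join of at most three line segments and satisfies conditions (1)--(4) of Definition \ref{deform} with constants independent of the data, by a direct computation with planar triangles. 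In either case $\pm X_{2}$ are deformable.

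\emph{Case $n\geq 4$.} Suppose, for contradiction, that $X_{2}$ is deformable in $\bbE_{n}$. Then Proposition \ref{deformimpliesdiff} implies that the CC distance is differentiable at $\exp(X_{2})$, contradicting Proposition \ref{X2nogood}. Hence $X_{2}$ is not deformable. For $-X_{2}$, observe that the linear map fixing $X_{1}$ and sending $X_{i}\mapsto (-1)^{i}X_{i}$ for $i\geq 2$ respects the bracket relations $[X_{i},X_{1}]=X_{i+1}$ ($1<i<n$) and preserves the stratification, hence is a Lie algebra automorphism of $\mathcal{E}_{n}$ carrying $X_{2}$ to $-X_{2}$. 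It lifts to a stratification-preserving Lie group automorphism of $\bbE_{n}$ which is a CC isometry and maps $\exp(X_{2})$ to $\exp(-X_{2})$. Consequently deformability of $-X_{2}$ is equivalent to deformability of $X_{2}$, and likewise differentiability of $d$ at $\exp(-X_{2})$ is equivalent to differentiability at $\exp(X_{2})$. Therefore $-X_{2}$ is not deformable either.
\end{proof}
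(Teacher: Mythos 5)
Your overall approach is the same as the paper's (which dispatches the corollary in a single sentence: for $n=2,3$ it is Euclidean resp.\ step two, covered by \cite{LPS17}; for $n\geq 4$ it combines Proposition \ref{deformimpliesdiff} with Proposition \ref{X2nogood}). The extra care you take over the $-X_{2}$ case is welcome, since Proposition \ref{X2nogood} as stated only addresses $\exp(X_{2})$ and the paper passes silently from $X_{2}$ to $\pm X_{2}$.

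However, the explicit automorphism you write down in the proof is wrong. The map ``fixing $X_{1}$ and sending $X_{i}\mapsto (-1)^{i}X_{i}$ for $i\geq 2$'' sends $X_{2}\mapsto (+1)X_{2}$, so it does not carry $X_{2}$ to $-X_{2}$; moreover it fails to be a Lie algebra homomorphism, since $[\phi(X_{2}),\phi(X_{1})]=[X_{2},X_{1}]=X_{3}$ while $\phi([X_{2},X_{1}])=\phi(X_{3})=-X_{3}$. The correct map, which you in fact mention as the ``even simpler'' alternative in your preamble, is $\phi(X_{1})=X_{1}$ and $\phi(X_{i})=-X_{i}$ for all $i\geq 2$: then $\phi([X_{i},X_{1}])=-X_{i+1}=\phi(X_{i+1})$, so $\phi$ is a stratification-preserving Lie algebra automorphism of $\mathcal{E}_{n}$, it restricts to an $\omega$-isometry of $V_{1}$, and it sends $X_{2}\mapsto -X_{2}$. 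Replace the formula accordingly and the argument is complete; the rest (citing \cite{LPS17} for step two, the elementary planar construction for $n=2$, and the contrapositive of Proposition \ref{deformimpliesdiff} together with Proposition \ref{X2nogood} for $n\geq 4$) is correct and matches the paper's intent.
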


Our second result addressing non-differentiability of the CC distance gives an example of a Carnot group where the CC distance fails to be differentiable at the endpoint of every horizontal vector. This is Theorem \ref{strongnondiff}.

\begin{proof}[Proof of Theorem \ref{strongnondiff}]
Fix an orthonormal basis $X_{1}, X_{2}$ of the horizontal layer of $\bbF_{2,3}$. It suffices to show that the CC distance in $\bbF_{2,3}$ is not differentiable at $\exp(E)$ whenever $E=aX_{1}+bX_{2}$ with $a^2+b^2=1$. Let $Y_{1}, Y_{2}$ be a basis of the horizontal layer in the Engel group $\bbE_{4}$, where the CC distance defined using $Y_{1}, Y_{2}$ is not differentiable at $\exp(Y_{2})$. 

Define $W_{1}=bY_{1}+aY_{2}$, $W_{2}=-aY_{1}+bY_{2}$. Notice $W_{1}, W_{2}$ are orthonormal with respect to the inner product induced by $Y_{1}, Y_{2}$. Hence the CC distance in $\bbE_{4}$ obtained from $W_{1}, W_{2}$ is the same as the CC distance obtained from $Y_{1}, Y_{2}$. Now define a linear map $\Phi$ from the horizontal layer of $\bbF_{2,3}$ to the horizontal layer of $\bbE_{4}$ by $\Phi(X_{i})=W_{i}$ for $i=1,2$. Using the definition of free Lie algebra and lifting, we obtain a Lie group homomorphism $F\colon \bbF_{2,3}\to \bbE_{4}$ such that $F_{*}(X_{i})=W_{i}$ for $i=1,2$. Since $a^2+b^2=1$, we have:
\[ F_{*}(aX_{1}+bX_{2})=aW_{1}+bW_{2}=Y_{2}.\]
Since the CC distance in $\bbE_{4}$ is not differentiable at $\exp(Y_{2})$, we deduce using Proposition \ref{quotientdiffCC} that the CC distance in $\bbF_{2,3}$ cannot be differentiable at $aX_{1}+bX_{2}$.
\end{proof}

\section{Deformability in model filiform groups}\label{CurvesFiliform}

In this section we work in $\bbE_{n}$ for some $n\geq 3$. Our goal is to prove that every horizontal direction in $\bbE_{n}$ except possibly $\pm X_{2}$ is deformable. For simplicity of notation, in this section we identify the model filiform group $\bbE_{n}$ with its Lie algebra $\mathcal{E}_{n}$. Hence for $E\in \mathcal{E}_{n}$ we will simply write $E$ instead of $\exp(E)$.

\subsection{Construction of horizontal curves in filiform groups}

We start by proving two lemmas that show how a horizontal line can be perturbed to reach a nearby point. The first lemma shows how to reach a point whose $n$'th coordinate is known, with small errors in the other vertical coordinates. Given $A\in \bbR$, we will use the notation
\begin{equation}\label{EE'}
E:=X_1+A X_2 \qquad \mbox{and} \qquad E':=AX_1-X_2.
\end{equation}
Note that $E$ and $E'$ are orthogonal with respect to $\omega$, for any $A\in \bbR$.

\begin{lemma}\label{Xn}
For all $A\in\bbR$ there exist polynomials $P_{i}(x)$ for $3\leq i \leq n$ depending on $n$ such that
\begin{itemize}
\item each $P_{i}(x)$ is divisible by $x^2$
\item the coefficients of each $P_{i}(x)$ are polynomials in $A$
\end{itemize}
and the following holds. 

Then, for every $\eta \in \bbR$ there exist $\eta_{i}\in \{\pm \eta\}$ for $1\leq i\leq 2^{n-2}$ such that
\[\prod_{i=1}^{2^{n-2}} \frac{1}{2^{n-2}} (E+\eta_i E')= E+C_{n}(A^2+1)\eta X_n +\sum_{i=3}^{n} P_i(\eta) X_i,\]
where $C_{n}\neq 0$ is a constant depending on $n$ and the sign of each $\eta_{i}$ depends on $i$ but not on $\eta$ or $A$.
\end{lemma}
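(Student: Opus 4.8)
The plan is to work directly in the Lie algebra $\mathcal{E}_n$, using the fact that under the BCH formula the group product of a long chain of horizontal vectors can be computed explicitly because $\mathcal{E}_n$ is filiform: all iterated brackets collapse onto the single chain $X_1, X_2, X_3=[X_2,X_1], \ldots, X_n=[X_{n-1},X_1]$. The key structural observation is that in a product $v_1 \diamond v_2 \diamond \cdots \diamond v_m$ where each $v_k = \tfrac{1}{2^{n-2}}(E + \eta_k E')$, the horizontal components all equal $\tfrac{1}{2^{n-2}}E$ up to the $X_2$-perturbation $\tfrac{\eta_k}{2^{n-2}}E'$; writing $E = X_1 + AX_2$ and $E' = AX_1 - X_2$, the $X_1$-coefficient of $v_k$ is $\tfrac{1}{2^{n-2}}(1+A\eta_k)$ and the $X_2$-coefficient is $\tfrac{1}{2^{n-2}}(A-\eta_k)$. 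First I would record the recursive structure: for vectors lying in a filiform algebra, $\bigl(\sum_i a_i X_1 + b_i X_2 + (\text{higher})\bigr)$-type products have their degree-$j$ ($j \geq 3$) component given by a universal polynomial in the $a_i, b_i$ with rational coefficients, and that component is a sum of terms each of which is a product of at least one $b_i$ (an $X_2$-type coefficient) with a string of $X_1$-type coefficients — this is because $X_3, \ldots, X_n$ only arise from bracketing $X_2$ repeatedly with $X_1$.

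Next I would isolate the top-degree ($X_n$) coefficient. Since $X_n$ sits in the last stratum, the only iterated brackets contributing to it are those of the form $[[\cdots[[X_2, X_1], X_1]\cdots], X_1]$ with exactly $n-2$ copies of $X_1$. By the combinatorics of BCH (the Dynkin–Specht–Wever form, or direct bookkeeping), the $X_n$-coefficient of the product is a fixed universal multilinear expression: it is $C_n$ times $\bigl(\sum_k (\text{some monomial in the }a_i)\bigr)$ but, crucially, linear in the single $X_2$-type coefficient that appears, summed over which factor it came from. Substituting $a_k = \tfrac{1}{2^{n-2}}(1+A\eta_k)$, $b_k = \tfrac{1}{2^{n-2}}(A - \eta_k)$, the $X_n$-coefficient becomes a polynomial in $\eta$ whose leading behaviour I want to extract. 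Here is the crux: $\sum_k \eta_k$ can be made to vanish (half the signs $+$, half $-$, which is possible since $2^{n-2}$ is even for $n \geq 3$), but $\sum_k \eta_k \cdot (\text{product of other factors' } a_j)$ need not vanish — and this is where the sign pattern must be chosen carefully. I would argue that one can choose the $2^{n-2}$ signs $\eta_i \in \{\pm 1\}$, depending only on $i$, so that: (a) the $X_n$-coefficient equals exactly $C_n(A^2+1)\eta$ for a nonzero constant $C_n$, and (b) every lower-degree coefficient $P_i(\eta)$, $3 \leq i \leq n-1$, is a polynomial in $\eta$ divisible by $\eta^2$ (no linear term), and moreover $P_n$ absorbs any residual higher-order-in-$\eta$ contribution — actually, to match the statement cleanly, I would set up the $X_n$ term to be exactly linear, which requires showing the $\eta^2$-and-higher contributions to the $X_n$-slot cancel under the chosen signs, OR re-examine: the statement says the $X_n$ coefficient is exactly $C_n(A^2+1)\eta$ with no $P_n$ correction beyond it, so the sign choice must kill all $\eta^{\geq 2}$ terms in the top slot.

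The natural way to engineer this is an explicit pairing/symmetry argument on the index set $\{1, \ldots, 2^{n-2}\}$: pair up indices so that the $\eta_i$ form a palindromic or alternating pattern making the product telescope. Concretely, I expect the right choice is to index the factors by binary strings in $\{+,-\}^{n-2}$ (hence $2^{n-2}$ of them) and let $\eta_i$ be the sign of the first bit, or a parity of the bits — chosen so that when one expands the multilinear $X_n$-form $\sum_k b_k \prod_{j \neq k} a_j$ (and its higher-BCH corrections), the terms cancel in pairs except for the $\eta^1$ contribution. I would verify the $A$-dependence comes out as $(A^2+1)$: note $a_k b_k = \tfrac{1}{2^{2(n-2)}}(1+A\eta_k)(A-\eta_k) = \tfrac{1}{2^{2(n-2)}}(A - \eta_k + A^2\eta_k - A\eta_k^2) = \tfrac{1}{2^{2(n-2)}}(-\eta_k + A^2 \eta_k)$ since $\eta_k^2 = 1$, giving a clean $(A^2-1)\eta_k$; combined with cross terms $b_k a_j$ for $j \neq k$ one collects $(A^2+1)$ — I would double-check this is consistent with $n=3$ where the product is just $v_1 \diamond v_2 = v_1 + v_2 + \tfrac12[v_1,v_2]$ and $[v_1,v_2]$ has $X_3$-coefficient $a_1 b_2 - a_2 b_1$, which with $\eta_1 = +\eta$, $\eta_2 = -\eta$ and the $\tfrac1{2}$ normalization works out to a nonzero multiple of $(A^2+1)\eta$. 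The main obstacle, and the bulk of the real work, is steps (a)–(b): producing the explicit sign pattern $\eta_i$ (depending only on $i$) and proving by induction on $n$ that it simultaneously forces the top coefficient to be exactly $C_n(A^2+1)\eta$ (linear, nonzero constant) and forces all intermediate $P_i$ to start at order $\eta^2$. I would set this up as an induction on $n$, treating $\mathcal{E}_{n+1}$ as a central extension of $\mathcal{E}_n$, doubling the number of factors $2^{n-2} \to 2^{n-1}$ by splitting the palindromic block, and tracking how the $X_{n+1}$-coefficient is produced from brackets of the $\mathcal{E}_n$-data with $X_1$; the divisibility by $\eta^2$ of the lower $P_i$ should propagate automatically, while the linearity of the top term in the extension is the delicate point requiring the precise choice of how to split the sign block.
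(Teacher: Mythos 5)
Your high-level plan --- indexing the factors by a binary/parity pattern, viewing $\mathcal{E}_{n+1}$ as a central extension of $\mathcal{E}_n$, doubling the number of factors, and inducting on $n$ --- is exactly what the paper does. The paper makes the ``palindromic block'' recursion concrete: it defines $p_1(\eta)=(E+\eta E')(E-\eta E')$ and $p_{k+1}(\eta)=p_k(\eta)\,p_k(-\eta)$, which produces precisely the parity-of-bits (Thue--Morse) sign sequence you guessed, and the divisibility by $\eta^2$ of the lower coefficients then propagates because $p_k(\eta)+p_k(-\eta)$ kills all odd-degree-in-$\eta$ contributions while $\frac12[p_k(\eta),p_k(-\eta)]$ injects a new linear term one slot further down. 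So the route is the right one; your intuition about the sign pattern is correct.

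However there is a genuine gap that would derail the write-up: you have misread the target identity. The right-hand side contains $\sum_{i=3}^{n}P_i(\eta)X_i$, so $P_n(\eta)X_n$ also contributes to the $X_n$-coefficient; the total $X_n$-coefficient is $C_n(A^2+1)\eta+P_n(\eta)$ with $P_n$ divisible by $\eta^2$. There is no requirement that the $X_n$-slot be \emph{exactly} linear in $\eta$, and indeed trying to ``kill all $\eta^{\geq 2}$ terms in the top slot,'' as you ultimately decide to do, is an overconstraint that the sign choice does not and need not satisfy. The content of the lemma is only that the $X_n$-coefficient has a nonzero linear-in-$\eta$ part while the $X_3,\dots,X_{n-1}$ coefficients have none; $P_n$ is there precisely to absorb the higher-order junk. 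Separately, there is a computational slip in your $A$-dependence check: you use $\eta_k^2=1$, but $\eta_k\in\{\pm\eta\}$ so $\eta_k^2=\eta^2$, and in any case the $(A^2+1)$ factor comes from the antisymmetric cross term $a_jb_k-a_kb_j$ (the determinant appearing in the $X_3$-coefficient of a bracket), not from $a_kb_k$. For $n=3$ one has $[E+\eta E',E-\eta E']=-2(A^2+1)\eta\,X_3$, which is where the factor really originates. Fix the reading of the statement and redo that computation, and the inductive scheme you outlined will close in essentially the paper's way.
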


\begin{proof}
We define products $p_{k}(\eta)$ inductively by
\begin{equation}\label{p1}
p_{1}(\eta):=(E+\eta E')(E-\eta E')
\end{equation}
and
\begin{equation}\label{pi}
\qquad p_{k+1}(\eta):=p_{k}(\eta)p_{k}(-\eta)
\end{equation}
for all $k\geq 1$. Choose $\eta_i\in \{\pm \eta\}$ for $1\leq i\leq 2^{n-2}$ such that
\[p_{n-2}(\eta) = \prod_{i=1}^{2^{n-2}} (E+\eta_i E').\]
To establish the lemma, it suffices to prove there exist $C=C_{n}\neq 0$ and polynomials $P_{i}(x)$ as in the statement such that
\begin{equation}\label{p_{n-2}}
p_{n-2}(\eta)=2^{n-2} E + 2^{n-2}C(A^2+1)\eta X_{n}+\sum_{i=3}^{n} P_i(\eta) X_i.
\end{equation}
To do so we prove by induction that, for every $1\leq k\leq n-2$, $p_{k}$ has the form
\begin{equation}\label{pkform}
p_{k}(\eta)=2^{k}E+\sum_{i=4}^{n} S_i(\eta) X_i + \eta \sum_{i=k+2}^n\lambda_i X_{i},
\end{equation}
where $\lambda_i$ are constants with $\lambda_{k+2}\neq 0$ and $S_i(x)$ are polynomials divisible by $x^2$.

To begin proving \eqref{pkform}, notice that Definition \ref{filiform}, \eqref{BCH}, \eqref{EE'} and \eqref{p1} give
\begin{equation}\label{p1form}
p_1(\eta)=2E-(A^2+1)\eta X_3 + \sum_{i=4}^{n} Q_i(\eta) X_i,
\end{equation}
where the polynomials $Q_i(x)$ are divisible by $x$. Hence \eqref{pkform} holds for $k=1$.

Next we suppose that $p_{k}$ satisfies \eqref{pkform} for some $1\leq k \leq n-3$; we will show that $p_{k+1}$ has the desired form too. Firstly, the BCH formula \eqref{BCH} gives
\begin{align}\label{pkproduct}
p_{k+1}(\eta)&=p_{k}(\eta)p_{k}(-\eta)\nonumber \\
&=p_k(\eta)+p_k(-\eta)+\frac{1}{2}[p_{k}(\eta),p_{k}(-\eta)]\nonumber \\
&\qquad \qquad+\mbox{brackets of length $\geq 3$}.
\end{align}
A simple computation using \eqref{pkform} yields
\begin{equation}\label{pksum}
p_k(\eta)+p_k(-\eta)=2^{k+1} E+ \sum_{i=4}^{n} (S_i(\eta)+S_i(-\eta)) X_i
\end{equation}
and
\begin{equation}\label{pkbracket}
\frac{1}{2}[p_k(\eta),p_k(-\eta)]=2^{k}\eta \sum_{i=k+2}^{n-1} \lambda_i X_{i+1}+2^{k-1}\sum_{i=4}^{n-1} (S_i(\eta) -S_i(-\eta)) X_{i+1}.
\end{equation}
Note that $(S_i(\eta)-S_i(-\eta))$ is divisible by $\eta^2$ because each polynomial $S_{i}(x)$ is divisible by $x^2$. The coefficient of $X_{k+3}$ in the first term on the right hand side of \eqref{pkbracket} is $2^{k} \lambda_{k+2} \eta$, with $\lambda_{k+2}\neq 0$ coming from the induction hypothesis. The other terms in \eqref{pkproduct} are linear combinations of nested commutators of length greater or equal than three, i.e.
\[
[Z_1,[Z_2,\ldots,[Z_{M-1},Z_M]]\ldots]
\]
where $Z_i\in\{p_k(\eta),p_k(-\eta)\}$ and $M\geq 3$. By the definition of $Z_i$ we get that each of the previous commutators is a constant multiple of 
\begin{align*}
&[X_1,[X_1,\ldots,[Z_{M-1},Z_M]]\ldots]\\
&\qquad =2^{k+1}\eta \sum_{i=k+2}^{n+1-M}\lambda_i X_{i+M-1}+2^k\sum_{i=4}^{n+1-M} (S_i(\eta) -S_i(-\eta)) X_{i+M-1}.
\end{align*}
The leading term in the first sum above is a multiple of $X_{j}$ with $j\geq k+4$, while the second sum consists of terms with coefficients divisible by $\eta^2$. Combining this with \eqref{pksum} and \eqref{pkbracket} shows that $p_{k+1}$ satisfies \eqref{pkform} with $k$ replaced by $k+1$. 

It follows by induction that $p_{k}$ has the desired form for every $k$. Evaluating \eqref{pkform} at $k=n-2$ gives \eqref{p_{n-2}}. This proves the lemma.
\end{proof}

We now use the Lemma \ref{Xn} and an induction argument on the dimension of the filiform group to show how a horizontal line can be perturbed to reach a nearby point, without changing too much its length or direction.

\begin{lemma}\label{Filiformcurve}
For every $A_{0}\in \bbR$, there are numbers
\[\varepsilon=\varepsilon(A_{0},n)>0, \qquad K=K(A_{0},n)>0, \qquad N=N(n)\in \bbN\]
so that the following holds for $A\in (A_{0}-\varepsilon, A_{0}+\varepsilon)$ and $a_{2}, \ldots, a_{n} \in (-\varepsilon, \varepsilon)$.

There exist $\theta_{1}, \ldots, \theta_{N} \in \bbR$ which depend smoothly on $a_{2}, \ldots, a_{n}$ with $|\theta_{i}|\leq K\max_{j\geq 2}|a_{j}|$ for $1\leq i\leq N$ and
\begin{equation}\label{Filiformcurveeq}
\prod_{i=1}^{N} \frac{1}{N} (E+\theta_{i}E') = E+a_{2}E'+a_{3}X_{3}+\cdots+a_{n}X_{n}.
 \end{equation}
\end{lemma}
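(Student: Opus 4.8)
The goal is to solve for the exponents $\theta_1, \ldots, \theta_N$ so that the ordered product $\prod_{i=1}^{N} \frac{1}{N}(E + \theta_i E')$ hits the prescribed target $E + a_2 E' + a_3 X_3 + \cdots + a_n X_n$. The natural strategy is to fix $N = 2^{n-2}$ (the number appearing in Lemma \ref{Xn}) and to set up a smooth map $\Psi\colon \bbR^N \to \mathcal{E}_n$ sending $(\theta_1, \ldots, \theta_N)$ to the vector $\prod_{i=1}^N \frac{1}{N}(E + \theta_i E')$ expressed in the basis $X_1, \ldots, X_n$, then apply the inverse (or implicit) function theorem around the origin. At $\theta = 0$ every factor equals $\frac{1}{N}E$, so $\Psi(0) = E$ (using that $\exp$ and BCH behave correctly here, and that scaling each factor by $1/N$ and taking the $N$-fold product returns $E$ exactly — this is the reason for the $1/N$ normalization). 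Thus $\Psi(0) - E = 0 = (a_2, \ldots, a_n)$ at the origin, matching the required value when all $a_j = 0$.

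\textbf{Reduction of the Jacobian computation.} The first genuine step is to understand the linearization. Since the $X_1$-component of $\Psi$ is identically $1$ and the $X_2$-component is linear in $\theta$ with a nonzero gradient (the coefficient of $E'$ picks up $\frac{1}{N}\sum \theta_i$ type contributions), the interesting part is the map into the $(X_3, \ldots, X_n)$ coordinates together with the $E'$ coordinate. Here Lemma \ref{Xn} does the essential work: choosing the specific sign pattern $\theta_i = \eta_i \in \{\pm\eta\}$ from that lemma, one gets a one-parameter family $\eta \mapsto E + C_n(A^2+1)\eta X_n + \sum_{i=3}^n P_i(\eta) X_i$ with $C_n \neq 0$, which shows that the $X_n$-direction is reachable to first order. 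To get all of $X_3, \ldots, X_n, E'$, I would run an induction on $n$ (as the statement of the lemma hints): the model filiform group $\bbE_{n-1}$ is a quotient of $\bbE_n$ by the center $\mathrm{Span}\{X_n\}$, so by the inductive hypothesis one can already reach arbitrary small perturbations in the $E', X_3, \ldots, X_{n-1}$ directions using products of factors $E + \theta_i E'$ in $\bbE_{n-1}$; lifting these exponents back to $\bbE_n$ reaches the target modulo an error in the $X_n$-coordinate, and Lemma \ref{Xn} (or a variant supplying an extra batch of factors tuned to move only $X_n$ to leading order) corrects that last coordinate. Concatenating the two families of factors and reparametrizing gives a map whose derivative at the origin is triangular with nonzero diagonal entries — invertible — so the inverse function theorem yields smooth functions $\theta_i = \theta_i(a_2, \ldots, a_n)$ on a neighborhood, with $\theta_i(0) = 0$, hence the Lipschitz-type bound $|\theta_i| \leq K \max_{j \geq 2}|a_j|$ by smoothness (shrinking $\varepsilon$).

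\textbf{Uniformity in $A$.} The constants $\varepsilon, K$ must be uniform for $A$ in a neighborhood of $A_0$, not just for a single $A$. This is handled by treating $A$ as an additional parameter: one applies the inverse function theorem to the map $(\theta, A) \mapsto (\Psi_A(\theta) - E_A, A)$, or more simply observes that all the polynomial data (the coefficients $P_i$, the constant $C_n$, the entries of the Jacobian) depend polynomially — hence continuously — on $A$, the Jacobian is invertible at $(\theta, A) = (0, A_0)$, and invertibility together with the size of the inverse is an open condition, so it persists for $A \in (A_0 - \varepsilon, A_0 + \varepsilon)$ after shrinking $\varepsilon$. The number $N = 2^{n-2}$ (or a fixed multiple thereof coming from concatenating the inductive family with Lemma \ref{Xn}'s family) depends only on $n$, as required.

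\textbf{Main obstacle.} The technical heart is verifying that the combined Jacobian into $(E', X_3, \ldots, X_n)$-coordinates is nonsingular at the origin. Lemma \ref{Xn} only guarantees surjectivity onto the $X_n$-line directly; reaching the intermediate layers $X_3, \ldots, X_{n-1}$ and the horizontal direction $E'$ simultaneously, while keeping the whole construction inside a product of boundedly many factors of the form $E + \theta_i E'$, requires organizing the induction carefully so that the lifted lower-dimensional solution and the $X_n$-correction do not interfere to leading order — i.e. that the triangular structure of the derivative is genuinely achieved. This bookkeeping (which batch of $\theta_i$'s controls which coordinate, and checking the off-diagonal contributions are higher order) is where the real work lies; everything after the Jacobian is invertible is a routine application of the inverse function theorem with parameters.
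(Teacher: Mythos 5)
Your plan follows the paper's proof almost exactly: induction on $n$ using $\bbE_{n-1}$ as the quotient of $\bbE_n$ by its center, a lift of the inductive solution that controls $E', X_3,\dots,X_{n-1}$, Lemma~\ref{Xn} to supply a one-parameter family moving $X_n$ to leading order, concatenation of the two factor families (with a final dilation so all factors carry the same $1/N$ weight), and the inverse function theorem with $A$ as a parameter to extract smooth solutions and the linear bound $|\theta_i|\leq K\max_j|a_j|$ on a uniform neighborhood of $A_0$. You correctly flag the verification of the lower-triangular Jacobian as the technical crux — this is precisely Claim~\ref{claimIFT}, where the paper writes out the coefficient table for $p_1p_2$ and checks the determinant is $C(A^2+1)\neq 0$.
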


\begin{proof}
We prove the lemma by induction on $n$. If $n=2$ the result is clear as $\bbE_{2}$ is simply Euclidean space $\bbR^{2}$ as a Carnot group. Suppose the statement holds in $\bbE_{n-1}$; we will show that it also holds for $\bbE_{n}$. 

By the induction hypothesis, there exist
\[\varepsilon=\varepsilon(A_{0},n-1)>0, \qquad K=K(A_{0},n-1)>0, \qquad N=N(n-1) \in \bbN\]
such that the following holds in $\bbE_{n-1}$. For any choice of $A\in (A_{0}-\varepsilon, A_{0}+\varepsilon)$ and $a_{2}, \ldots, a_{n-1} \in (-\varepsilon, \varepsilon)$, there exist $\theta_{1}, \ldots, \theta_{N}$ which depend smoothly on $a_2, \ldots, a_{n-1}$ with $|\theta_{i}|\leq K\max_{j\geq 2}|a_{j}|$ for $1\leq i\leq N$ and
\begin{equation}\label{hypothesis}
\prod_{i=1}^{N} \frac{1}{N} (E+\theta_{i}E') = E+a_{2}E'+a_{3}X_{3}+\cdots+a_{n-1}X_{n-1} \qquad \mbox{in }\bbE_{n-1}.
\end{equation}
We now lift the above equation to $\bbE_{n}$. In other words, we consider $E$ and $E'$ as elements of the Lie algebra of $\bbE_{n}$ in the natural way. All calculations when computing the product using the BCH formula \eqref{BCH} remain the same, except for $[X_{n-1},X_{1}]$ which will be equal to $X_{n}$ rather than $0$. An easy calculation shows
\begin{equation}\label{secondterm}
[E+\theta_{1}E', E+\theta_{2}E']=(1+A^2)(\theta_{2}-\theta_{1})X_{3}.
\end{equation}
From now on we work in $\bbE_{n}$. By the BCH formula \eqref{BCH}, \eqref{hypothesis} and \eqref{secondterm}:
\begin{align*}
p_{1}&:=\prod_{i=1}^{N} \frac{1}{N} (E+\theta_{i}E') \\
&= E+a_{2}E'+a_{3}X_{3}+\cdots+a_{n-1}X_{n-1} + L(\theta_{1}, \ldots, \theta_{N})X_{n}\\
&= (1+Aa_{2})X_{1}+(A-a_{2})X_{2}+a_{3}X_{3}+\cdots+a_{n-1}X_{n-1} + L(\theta_{1}, \ldots, \theta_{N})X_{n},
\end{align*}
where $L(\theta_{1}, \ldots, \theta_{N})$ is a polynomial in $\theta_{1}, \ldots, \theta_{N}$ (with coefficients depending on $A$) with no constant term due to \eqref{secondterm}. By using Lemma \ref{EE'}, for any $\eta \in \bbR$ we can choose $\eta_{i}\in \{\pm \eta\}$ for $1\leq i\leq 2^{n-2}$ with sign depending on $i$ but not $\eta$ such that
\begin{align*}
p_{2}&:=\prod_{i=1}^{2^{n-2}} \frac{1}{2^{n-2}} (E+\eta_i E')\\
&= E+C(A^2+1)\eta X_n +\sum_{i=3}^{n} P_i(\eta) X_i\\
&= X_{1}+AX_{2} +C(A^2+1)\eta X_n +\sum_{i=3}^{n} P_i(\eta) X_i,
\end{align*}
where $P_{i}(x)$ are polynomials divisible by $x^2$ and $C=C_{n}\neq 0$. 

We now analyze $p_{1}p_{2}$. First notice that
\begin{align*}
p_{1}+p_{2}&= (2+Aa_{2})X_{1}+(2A-a_{2})X_{2}+a_{3}X_{3}+\ldots + a_{n-1}X_{n-1}\\
&\qquad \qquad +(L(\theta_{1}, \ldots, \theta_{N})+C(A^2+1)\eta)X_{n} + \sum_{i=3}^{n}P_{i}(\eta)X_{i}
\end{align*}
and
\begin{align*}
[p_{1},p_{2}]&=-(A^2+1)a_{2}X_{3}+a_{3}X_{4}+\ldots +a_{n-1}X_{n}\\
& \qquad \qquad -\sum_{i=4}^{n} (1+Aa_{2})P_{i-1}(\eta)X_{i}.
\end{align*}
By using the BCH formula \eqref{BCH}, the coefficients of $E, E', X_{3}, \ldots, X_{n}$ in $p_{1}p_{2}$ are given by
\begin{align*}
&E \qquad &2\\
&E' &a_{2}\\
&X_{3} &a_{3}+F_{3}(a_{2})+Q_{3}(\eta)\\
&\ldots &\ldots\\
&X_{i} &a_{i}+F_{i}(a_{2}, \ldots, a_{i-1}) + Q_{i}(\eta, a_{2})\\
&\ldots &\ldots\\
&X_{n} &L(\theta_{1}, \ldots, \theta_{N})+C(A^2+1)\eta + F_{n}(a_{2}, \ldots, a_{n-1}) + Q_{n}(\eta, a_{2}),
\end{align*}
where, for $i=3,\ldots, n$,
\begin{itemize}
\item $F_{i}(a_{2}, \ldots, a_{i-1})$ is a polynomial with no constant term whose coefficients depend smoothly on $A$,
\item $Q_{i}(\eta, a_{2})$ is a polynomial divisible by $\eta^{2}$ whose coefficients depend smoothly on $A$.
\end{itemize}
Define $\Phi_{A}\colon (-\varepsilon,\varepsilon)^{n} \to \bbR^{n-1}$ to be the function of $a_{2}, \ldots, a_{n-1}, \eta$ whose coordinates are given by the coefficients of $E', X_{3}, X_{4}, \ldots, X_{n}$ in the above table. Recall that $\varepsilon=\varepsilon(A_{0}, n-1)$ was chosen using the induction hypothesis, which implies $\theta_{1}, \ldots, \theta_{N}$ depend smoothly on $a_{2}, \ldots, a_{n-1}$ whenever $(a_{2}, \ldots, a_{n-1}, \eta) \in (-\varepsilon,\varepsilon)^{n}$. Notice that $\Phi_{A}(0)=0$ and the equality
\begin{equation}\label{tosolve}
\delta_{1/2}(p_{1}p_{2})= E+b_{2}E'+b_{3}X_{3}+\ldots +b_{n}X_{n}
\end{equation}
is equivalent to
\begin{equation}\label{tosolve2}
\Phi_{A}(a_{2}, \ldots, a_{n-1}, \eta)= (2b_{2}, 2^2 b_{3}, \ldots, 2^{i}b_{i}, \ldots, 2^{n}b_{n}).
\end{equation}

\begin{claim}\label{claimIFT}
There exists $\tilde{\varepsilon}=\tilde{\varepsilon}(A_{0}, n)>0$ and $\tilde{K}=\tilde{K}(A_{0},n)>0$ such that for all $A\in (A_{0}-\tilde{\varepsilon}, A_{0}+\tilde{\varepsilon})$ and $b_{2}, \ldots, b_{n} \in (-\tilde{\varepsilon}, \tilde{\varepsilon})$, the equation \eqref{tosolve2} can be solved for $a_{2}, \ldots, a_{n-1}, \eta$. One can choose the solutions so that:
\begin{enumerate}
\item $a_{2}, \ldots, a_{n-1}, \eta$ depend smoothly on $b_{2}, \ldots, b_{n}$,
\item $|a_{i}|, |\eta| \leq \tilde{K}\max_{j\geq 1} |b_{j}|$.
\end{enumerate}
\end{claim}

\begin{proof}[Proof of Claim]
Notice first that $(\partial Q_{i}/\partial \eta)(0)=0$ for each $i$, since $Q_{i}$ is divisible by $\eta^{2}$. Hence $T_{A}:=\Phi_{A}'(0)$ is a lower triangular matrix with determinant $C(A^2+1)\neq 0$. By the inverse function theorem, $\Phi_{A}$ is invertible with a $C^{1}$ inverse in a neighborhood of $0$. In other words for each $A$ and $n$, given $b_{2}, \ldots b_{n}$ sufficiently small, there exist $a_{2}, \ldots, a_{n-1}, \eta$ which depend smoothly on $b_{2}, \ldots, b_{n}$ such that \eqref{tosolve2} holds. We must show that one can use a uniform neighborhood for all $A\in (A_{0}-\tilde{\varepsilon}, A_{0}+\tilde{\varepsilon})$, where $\tilde{\varepsilon}>0$ is sufficiently small and depends on $A_{0}$ and $n$. To establish such a neighborhood and the desired bounds on $|a_{i}|, |\eta|$, we briefly study the proof of the inverse function theorem from \cite[9.2.4 Theorem]{Rud76}.

Define $\lambda(A)>0$ by $1/\lambda(A)=2\|T_{A}^{-1}\|_{\mathrm{op}}$. As the determinant of $T_{A}$ is $C(A^2+1)$ and the entries of the adjoint of $T_{A}$ are linear combinations of products of entries of $T_{A}$, there exists $\lambda>0$ depending on $A_{0}$ and $n$ such that $\lambda(A)>\lambda$ whenever $A\in (A_{0}-\varepsilon, A_{0}+\varepsilon)$. By the mean value theorem, there is $0<\tilde{\varepsilon}<\varepsilon$ depending on $A_{0}$ and $n$ such that $\|\Phi_{A}'(x)-T_{A}\|_{\mathrm{op}}<\lambda$ whenever $\|x\|<\tilde{\varepsilon}$ and $A\in (A_{0}-\tilde{\varepsilon}, A_{0}+\tilde{\varepsilon})=:I$. It follows from \cite{Rud76} that for $A\in I$, the restricted map $\Phi_{A}\colon {B(0,\tilde{\varepsilon})}\to \Phi_{A}(B(0,\tilde{\varepsilon}))$ is bijective with $C^{1}$ inverse. Since the entries of $(\Phi_{A}^{-1})'(x)$ are bounded for every $A\in I$ and every $x\in B(0, \tilde{\varepsilon})$, it follows that $\Phi_{A}$ is bi-Lipschitz for $A\in I$, with bi-Lipschitz constants depending on $A_{0}$. Hence $\Phi_{A}(B(0,\tilde{\varepsilon}))$ contains a ball $B(0,\tilde{\varepsilon}/\tilde{K})$, where $\tilde{K}\geq 1$ is some constant depending on $n$ and $A_{0}$. Replacing $\tilde{\varepsilon}$ by a slightly smaller constant (still depending only on $A_{0}$ and $n$) gives the desired neighborhood for the inversion.

Next, since $\Phi_{A}^{-1}$ is Lipschitz with some Lipschitz constant $\tilde{K}$ depending on $A_{0}$ and $n$ we have
\[ |\Phi_{A}^{-1}(2b_{2}, \ldots, 2^{n}b_{n})| \leq \tilde{K} |(2b_{2}, \ldots, 2^{n}b_{n})|,\]
which yields
\[ |(a_{2}, \ldots, a_{n-1}, \eta)| \leq 2^{n}\tilde{K} |(b_{2}, \ldots, b_{n})|.\]
This gives the desired bounds on $|a_{i}|$ and $|\eta|$.
\end{proof}

Since $\theta_{i}$ and $\eta_{i}$ depend smoothly on $a_{2}, \ldots, a_{n-1}$ and $\eta$, Claim \ref{claimIFT}(1) ensures that $\theta_{i}$ and $\eta_{i}$ depend smoothly on $b_{2}, \ldots, b_{n}$. Since $|\theta_{i}|\leq K\max_{j\geq 2}|a_{j}|$, Claim \ref{claimIFT}(2) implies that $|\theta_{i}|, |\eta|\leq K\tilde{K} \max_{j\geq 1} |b_{j}|$. To conclude, since \eqref{tosolve} and \eqref{tosolve2} are equivalent, it suffices to check that $\delta_{1/2}(p_{1}p_{2})$ is of the form of the left hand side of \eqref{Filiformcurveeq}. Indeed, since the BCH formula \eqref{BCH} implies $XX=2X$ for any element $X$ of the Lie algebra, we have
\begin{align*}
p_{1}p_{2} &= \prod_{i=1}^{N} \frac{1}{N} (E+\theta_{i}E') \prod_{i=1}^{2^{n-2}} \frac{1}{2^{n-2}} (E+\eta_i E')\\
&= \prod_{i=1}^{N2^{n-2}} \frac{1}{N2^{n-2}} (E+\tilde{\theta}_{i}E') \prod_{i=1}^{N2^{n-2}} \frac{1}{N2^{n-2}} (E+\tilde{\eta}_i E'),
\end{align*}
where the sequence $\tilde{\theta}_{i}$ repeats each term of $\theta_{i}$ for $2^{n-2}$ times and the sequence $\tilde{\eta}_i$ repeats each term of $\eta_{i}$ for $N$ times. Hence we can write
\begin{equation}\label{formproved}
\delta_{1/2}(p_{1}p_{2})=\prod_{i=1}^{N2^{n-1}}\frac{1}{N2^{n-1}} (E+\Theta_{i}E'),
\end{equation}
where the terms of the sequence $\Theta_{i}$ consist of the terms of $\tilde{\theta}_{i}$ followed by the terms of $\tilde{\eta}_i$. Notice that \eqref{formproved} is of the form given in \eqref{Filiformcurveeq}. This completes the proof.
\end{proof}

\subsection{Deformability in filiform groups}

We now use the horizontal curves built in Lemma \ref{Filiformcurve} to show that every horizontal direction in $\bbE_{n}$, except possibly $\pm X_{2}$, is deformable. Recall that by Corollary \ref{pmX2}, the directions $\pm X_{2}$ are deformable if and only if $n=2$ or $n=3$.

\begin{theorem}\label{deformFiliform}
For $n\geq 3$, every $E\in \mathcal{E}_{n}$ with $\omega(E)=1$, except for possibly $\pm X_{2}$, is deformable. Moreover, the parameters $C_{E}, N_{E}, \Delta_{E}$ related to the deformability of $E$ can be chosen so that any $\tilde{E}$ sufficiently close to $E$ is also deformable with the same parameters
\[ C_{\tilde{E}}=C_{E}, \qquad N_{\tilde{E}}=N_{E}, \qquad \Delta_{\tilde{E}}(\eta)=\Delta_{E}(\eta).\]
\end{theorem}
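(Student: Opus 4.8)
The plan is to verify Definition \ref{deform} directly for a given horizontal direction $E$ with $\omega(E)=1$ by exhibiting the required curve $g$, using the algebraic machinery of Lemma \ref{Filiformcurve}. The first reduction is normalization: since $\pm X_2$ is excluded, we may write $E$ as a scalar multiple of $X_1 + AX_2$ for some $A\in\bbR$ (after a possible sign flip), and by homogeneity of dilations and the left-invariant structure it suffices to handle the unit vector proportional to $X_1+AX_2$. I would then observe that deformability is a dilation-invariant condition in the following sense: given $0<s<1$ and $u$ with $d(u)\le 1$, the target point $\delta_{\Delta s}(u)$ is small, and applying the dilation $\delta_{1/(\Delta s)}$ (or rather composing the whole problem with a dilation) reduces the construction to the case $s=1$ at the cost of rescaling the length and direction bounds by controlled factors. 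The key point is that conditions (3) and (4) of Definition \ref{deform} involve $\mathrm{Lip}_\bbG(g)\le 1+\eta\Delta$ and $|(p\circ g)'-p(E)|\le C_E\Delta$, i.e. deviations of order $\Delta$, and Lemma \ref{Filiformcurve} produces curves whose defining parameters $\theta_i$ satisfy $|\theta_i|\le K\max_j|a_j|$, which after the appropriate rescaling will be of order $\Delta$.

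The core step is to take $u=(u_h, u_2, \dots, u_n)$ with $d(u)\le 1$, hence $|u_i|$ bounded by a structural constant, and write $\delta_{\Delta s}(u)$ in coordinates; its $X_i$-coordinate scales like $(\Delta s)^{\alpha_i}$. Since $\alpha_i\ge 1$ for all $i\ge 2$ and $\alpha_1=1$, all the "vertical" target coordinates are $O(\Delta)$ or smaller, and the horizontal displacement is $\zeta=\langle\delta_{\Delta s}(u),E(0)\rangle$. I would then invoke Lemma \ref{Filiformcurve} with $A_0=A$ to obtain, for $b_2,\dots,b_n$ small (here the $b_j$ encode the discrepancy between $\exp(sE)$ translated to the start and the actual target $\exp(sE)\delta_{\Delta s}(u)$, suitably rescaled), parameters $\theta_1,\dots,\theta_N$ with $|\theta_i|\le \tilde K\max_j|b_j|$ realizing the displacement as a concatenation of $N=N(n)$ horizontal segments of the form $E+\theta_i E'$. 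This concatenation, read as a horizontal curve, has Euclidean horizontal velocity $p(E)+\theta_i p(E')$ on each piece, so since $E\perp E'$ we get $\mathrm{Lip}_\bbG \le \sqrt{1+\max_i\theta_i^2}\le 1+\max_i\theta_i^2/2$, and $|(p\circ g)'-p(E)|\le |\theta_i|\,|p(E')|$. Choosing $\Delta_E(\eta)$ small enough relative to $\eta$ and the structural constant $\tilde K$, and $N_E=2N$ (to include the initial and final segments of $\exp(tE)$ for $|t|\ge s$, which add at most two more lines when we truncate and splice), and $C_E=\tilde K\cdot(\text{structural constant})\cdot |p(E')|$, gives all four conditions. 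The uniformity in $\tilde E$ near $E$ is then immediate because $\varepsilon, K, N$ from Lemma \ref{Filiformcurve} depend only on $A_0$ and $n$ and hence apply to an open interval of values of $A$; one just takes $\tilde E$ with corresponding parameter $\tilde A\in(A_0-\varepsilon,A_0+\varepsilon)$.

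The main obstacle I anticipate is bookkeeping the rescaling between the $s=1$ normal form of Lemma \ref{Filiformcurve} and the general $(s,\Delta,u)$ data, and in particular checking that after conjugating by $\delta_{1/s}$ (or $\delta_{1/(\Delta s)}$) the length bound degrades only by a factor depending on $\eta$ through a quantity that can be absorbed into $\Delta_E(\eta)$ — because dilations multiply the homogeneous-degree-$k$ coordinates by $\lambda^k$, the curve $g$ after rescaling has horizontal velocity unchanged (degree $1$) but the "error" terms that were $O(\theta_i)$ must be shown not to be amplified. A related subtlety is that Lemma \ref{Filiformcurve} is stated as an identity in the group $\bbE_n$ identified with its Lie algebra (products $E+\theta_i E'$ meaning $\exp(E+\theta_i E')$), so I must translate "$\prod \frac1N(E+\theta_i E') = \text{target}$" into an actual reparameterized horizontal curve and verify the concatenation is horizontal with the claimed endpoint and Lipschitz constant via Lemma \ref{lipschitzhorizontal}; the factor $\frac1N$ corresponds to each segment being traversed over a parameter interval of length proportional to $1/N$ times the horizontal step, which is the standard way a product of exponentials in the Lie algebra becomes a piecewise-linear horizontal path. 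Once this dictionary is set up cleanly, conditions (1)--(4) fall out, and the proof of both the main assertion and the uniformity clause is complete.
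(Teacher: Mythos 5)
Your proposal follows essentially the same route as the paper's proof: normalize $E = aX_1+bX_2$ with $a\neq 0$, rescale to the normal form $X_1+AX_2$ suitable for Lemma \ref{Filiformcurve}, apply that lemma with the $a_j$ of order $\Delta$, deduce conditions (3)--(4) from the orthogonality of $E$ and $E'$ together with the linear bound $|\theta_i|\leq K\max|a_j|$, and get the uniformity clause from the $\varepsilon$-interval in Lemma \ref{Filiformcurve}. One detail your Lipschitz estimate $\sqrt{1+\max_i\theta_i^2}$ elides is that the horizontal velocity carries the reparameterization factor $\tfrac{s}{s+\zeta}$ while the direction is $(a+\Delta u_1,\,b+\Delta u_2)$ rather than $(a,b)$; both factors equal $1+\Delta\langle p(u),p(E)\rangle+O(\Delta^2)$, and it is their mutual cancellation to first order that leaves a Lipschitz excess of size $O(\Delta^2)\leq\eta\Delta$ --- orthogonality of $E$ and $E'$ alone only controls the $\theta_i$ contribution and would not by itself eliminate the linear-in-$\Delta$ term.
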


\begin{proof}
Fix $E\in \mathcal{E}_{n}$ with $\omega(E)=1$ and $E\neq \pm X_{2}$. We need to show that for some $C_{E}, N_{E}$ and $\Delta_{E}\colon (0,\infty)\to (0,\infty)$ the following holds. Given $0<s<1$, $\eta\in (0,\infty)$, $u\in \bbE_{n}$ with $d(u)\leq 1$ and $0<\Delta<\Delta_{E}(\eta)$, there is a Lipschitz horizontal curve $g\colon \bbR\to \bbE_{n}$, formed by joining $N_{E}$ horizontal lines, such that
\begin{enumerate}
\item $g(t)=\exp(tE)$ for $|t|\geq s$,
\item $g(\zeta)=\delta_{\Delta s}(u)$, where $\zeta:= \langle \delta_{\Delta s}(u),E(0)\rangle$,
\item $\mathrm{Lip}(g)\leq 1+\eta \Delta$,
\item $|(p\circ g)'(t)-p(E)|\leq C_{E}\Delta$ for all but finitely many $t\in \bbR$.
\end{enumerate}
Moreover, the same parameters $C_{E}, N_{E}, \Delta_{E}$ should work for any direction sufficiently close to $E$.

Notice that for $|t|\geq s$ the curve is explicitly defined by (1) and satisfies (3) and (4). Hence our task is to extend $g(t)$ for $-s<t<\zeta$ and $\zeta<t<s$. Since the two cases are similar, we show how to handle $-s<t<\zeta$. Up to left translations and reparameterizations of the curve, it suffices to verify the following equivalent claim.

\begin{claim}\label{bigclaim}
There exist $C_{E}, N_{E}$ and $\Delta_{E}\colon (0,\infty) \to (0,\infty)$ such that the following holds.

Given any $0<s<1$, $\eta>0$, $u\in \bbE_{n}$ with $d(u)\leq 1$ and $0<\Delta<\Delta_{E}(\eta)$, there is a Lipschitz horizontal curve $\varphi\colon [0,s+\zeta]\to \bbE_{n}$, where $\zeta:= \langle \delta_{\Delta s}(u),E(0)\rangle$, formed by joining at most $N_{E}$ horizontal lines, such that $\varphi(0)=0$, $\varphi(s+\zeta)=\exp(sE)\delta_{\Delta s}(u)$, and
\begin{enumerate}
\item[(A)] $\mathrm{Lip}(\varphi)\leq 1+\eta \Delta$,
\item[(B)] $|(p\circ \varphi)'(t)-p(E)|\leq C_{E}\Delta$ for all but finitely many $t\in \bbR$.
\end{enumerate}
Moreover, the same parameters $C_{E}, N_{E}, \Delta_{E}(\eta)$ work for any direction sufficiently close to $E$.
\end{claim}

\begin{proof}[Proof of Claim]
Since $E\neq \pm X_{2}$, we can write $E=aX_{1}+bX_{2}$, where $a^2+b^2=1$ and $a\neq 0$. Without loss of generality, up to changing the direction of the curve, we can assume $a>0$. Let $u:=u_{1}X_{1}+u_{2}X_{2}+\cdots + u_{n}X_{n}$. Identifying $\bbE_{n}$ and $\mathcal{E}_{n}$, since $E\in V_{1}$ we have
\[(sE)\delta_{\Delta s}(u)=\delta_{s}(E\delta_{\Delta}(u)),\]
and a simple computation gives
\[E+\delta_{\Delta}(u)=(a+\Delta u_{1})X_{1} + (b+\Delta u_{2})X_{2} + \Delta^{2}u_{3}X_{3} + \cdots + \Delta^{n-1}u_{n}X_{n},\]
\[[E, \delta_{\Delta}(u)]=\Delta(bu_{1}-au_{2})X_{3} - a\Delta^{2}u_{3}X_{4} - \cdots - a\Delta^{n-2}u_{n-1}X_{n}.\]
By the BCH formula \eqref{BCH}, it is then clear that $E \delta_{\Delta}(u)$ has the form
\[E \delta_{\Delta}(u)=(a+\Delta u_{1})X_{1} + (b+\Delta u_{2})X_{2} + \sum_{i=3}^{n} \eta_{i}X_{i},\]
where $\eta_{i}$ satisfy $|\eta_{i}|\leq \tilde{Q}\Delta$ for a constant $\tilde{Q}$ depending only on $n$. Next we write
\[E \delta_{\Delta}(u) = \delta_{a+\Delta u_{1}} \left( X_{1} + \frac{b+\Delta u_{2}}{a+\Delta u_{1}}X_{2} + \sum_{i=3}^{n} \frac{\eta_{i}}{(a+\Delta u_{1})^{i-1}}X_{i} \right),\]
and we define
\[ A:=\frac{b+\Delta u_{2}}{a+\Delta u_{1}}, \qquad a_{2}:=0, \qquad a_{i}:=\frac{\eta_{i}}{(a+\Delta u_{1})^{i-1}} \, \mbox{ for }3\leq i \leq n.\]
Let $A_{0}:=b/a$ and let $\varepsilon=\varepsilon(A_{0},n)$, $K=K(A_{0},n)$ and $N=N(n)$ be defined according to Lemma \ref{Filiformcurve}. By making $\Delta$ sufficiently small, we can ensure that $A\in (A_{0}-\varepsilon, A_{0}+\varepsilon)$ and $|a_{j}|<\varepsilon$. If we consider a direction $\tilde{E}=\tilde{a}X_{1}+\tilde{b}X_{2}$ with $\tilde{a}$ and $\tilde{b}$ sufficiently close to $a$ and $b$ (bound depending on $a$, $b$, $\varepsilon$), then we can ensure that if
\[\tilde{A}:=\frac{\tilde{b}+\Delta u_{2}}{\tilde{a}+\Delta u_{1}}, \qquad \tilde{a}_{2}:=0, \qquad \tilde{a}_{i}:=\frac{\eta_{i}}{(a+\Delta u_{1})^{i-1}} \, \mbox{ for }3\leq i \leq n,\]
then $\tilde{A}\in (A_{0}-\varepsilon, A_{0}+\varepsilon)$ and $|\tilde{a}_{j}|<\varepsilon$. Hence, in what follows, everything which applies to the direction $E$ will also apply to every direction $\tilde{E}$ sufficiently close to $E$ with the same parameters.

Applying Lemma \ref{Filiformcurve} with $E_{0}=X_{1}+AX_{2}$ and $E_{0}'=AX_{1}-X_{2}$ gives smooth functions $\theta_{1}, \ldots, \theta_{N}$ satisfying $|\theta_{i}|\leq K\max_{j\geq 2}|a_{j}|$ such that
\begin{equation}\label{eqfromFiliformcurve}
\prod_{i=1}^{N} \frac{1}{N} (E_{0}+\theta_{i}E_{0}') = E_{0}+a_{2}E_{0}'+a_{3}X_{3}+\cdots+a_{n}X_{n}.
\end{equation}
By definition of $a_{j}$, it follows that $|\theta_{i}|\leq Q\Delta$ for some constant $Q$ depending on $E$, provided that $\Delta$ is small compared to $a$. Using the definitions of $E_{0}$, $E_{0}'$ and dilating both sides of \eqref{eqfromFiliformcurve} by $a+\Delta u_{1}$ gives
\begin{align*}
&\prod_{i=1}^{N} \frac{1}{N} \Big( (a+\Delta u_{1})X_{1} + (b+\Delta u_{2})X_{2} + \theta_{i}( (b+\Delta u_{2})X_{1} - (a+\Delta u_{1})X_{2}) \Big)\\
&\qquad \qquad = (a+\Delta u_{1})X_{1} + (b+\Delta u_{2})X_{2} + \sum_{i=3}^{n} \eta_{i}X_{i}\\
&\qquad \qquad = E \delta_{\Delta}(u).
\end{align*}
Then, dilating both sides by $s$ and recalling that
\[\zeta= \langle \delta_{\Delta s}(u),E(0)\rangle=\Delta s(au_{1}+bu_{2}),\]
we get
\begin{align*}
&\prod_{i=1}^{N} \frac{s+\zeta}{N}\frac{s}{s+\zeta} \Big( (a+\Delta u_{1})X_{1} + (b+\Delta u_{2})X_{2} + \theta_{i}( (b+\Delta u_{2})X_{1} - (a+\Delta u_{1})X_{2}) \Big)\\
&\qquad \qquad = (sE)\delta_{\Delta s}(u).
\end{align*}
Define the horizontal curve $\varphi\colon [0,s+\zeta] \to \bbE_{n}$ by $\varphi(0)=0$ and
\[\varphi'(t)=\frac{s}{s+\zeta} \Big( (a+\Delta u_{1})X_{1} + (b+\Delta u_{2})X_{2} + \theta_{i}( (b+\Delta u_{2})X_{1} - (a+\Delta u_{1})X_{2}) \Big)\]
whenever
\[t\in I_{i}:=\left( \frac{(i-1)(s+\zeta)}{N},\, \frac{i(s+\zeta)}{N}\right), \qquad 1\leq i \leq N.\]
Then $\varphi(0)=0, \varphi(s+\zeta)=(sE)\delta_{\Delta s}(u)$ and $\varphi$ is a Lipschitz horizontal curve formed from joining $N$ horizontal lines. It remains to check that conditions (A) and (B) hold.

To verify (A), notice that by Lemma \ref{lipschitzhorizontal} it suffices to bound $|(p\circ \varphi)'|$. Recall that $|\theta_{i}|\leq Q\Delta$ for $1\leq i \leq N$ and $(1+x)^{1/2}\leq 1+x/2$ for $x\geq -1$. For any $t \in I_{i}$ and any sufficiently small $\Delta$ one has
\begin{align*}
|(p\circ \varphi)'| &= \frac{s}{s+\zeta}\Big|\Big( (a+\Delta u_{1})+\theta_{i}(b+\Delta u_{2}),\, (b+\Delta u_{2})-\theta_{i}(a+\Delta u_{1})\Big)\Big|\\
& =\frac{1}{1+\Delta(au_{1}+bu_{2})} \Big(1+2\Delta(au_{1}+bu_{2})+\Delta^{2}(u_{1}^2+u_{2}^2)\Big)^{1/2}\Big(1+\theta_{i}^2\Big)^{1/2}\\
& \leq \frac{1}{1+\Delta(au_{1}+bu_{2})} \Big( 1 + \Delta(au_{1}+bu_{2}) + \Delta^{2}(u_{1}^2+u_{2}^2)/2 \Big)\Big(1+\theta_{i}^2/2 \Big)\\
& \leq 1+\eta\Delta,
\end{align*}
where in the last inequality we used that $|\theta_{i}|\leq Q\Delta$ and made $\Delta$ sufficiently small relative to $\eta$. This proves (A).

To verify (B), we estimate as follows. For any $t\in I_{i}$ and any sufficiently small $\Delta$ one has
\begin{align*}
|(p\circ \varphi)'(t)-p(E)|&\leq C\Delta + \left| \frac{s}{s+\zeta}-1 \right|\\
&= C\Delta + \left| \frac{\zeta}{s+\zeta} \right|\\
&= C\Delta + \left| \frac{\Delta(au_{1}+bu_{2})}{1+\Delta(au_{1}+bu_{2})} \right| \\
&\leq C\Delta.
\end{align*}
This shows that (B) holds. Since our conclusions also hold for any direction sufficiently close to $E$, the claim is proved.
\end{proof}

As described earlier, Claim \ref{bigclaim} suffices to prove the theorem.
\end{proof}
 
Since deformability implies differentiability of the CC distance by Proposition \ref{deformimpliesdiff}, combining Proposition \ref{X2nogood} and Theorem \ref{deformFiliform} shows that a horizontal direction in a model filiform group $\bbE_{n}$ for $n\geq 4$ is deformable if and only if it is not $ \pm X_{2}$.

\section{Distances between piecewise linear curves}\label{sectiondistanceestimate}

In this section we prove a simple estimate for the distance between piecewise linear curves with similar directions in a general Carnot group $\bbG$. This will be needed to prove `almost maximality' implies differentiability. We first recall the following useful lemma \cite[Lemma 3.7]{Mag13}.

\begin{lemma}\label{Magnani}
Let $\bbG$ be a Carnot group of step $s$. Given any $\nu>0$, there exists a constant $K_{\nu}>0$ with the following property. If $N\in \bbN$ and $A_{j}, B_{j} \in \bbG$ defined for $j=1,\ldots,N$ satisfy $d(B_{j}B_{j+1}\cdots B_{N})\leq \nu$ and $d(A_{j},B_{j})\leq \nu$ for $j=1,\ldots,N$, then it holds that
\[d(A_{1}A_{2}\cdots A_{N}, B_{1}B_{2}\cdots B_{N}) \leq K_{\nu} \sum_{j=1}^{N} d(A_{j}, B_{j})^{1/s}.\]
\end{lemma}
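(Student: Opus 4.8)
\textbf{Proof proposal for Lemma \ref{Magnani} (distance between piecewise linear curves).}

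The plan is to reduce the claimed estimate to an iterated application of the triangle inequality together with the conjugation estimate of Proposition \ref{conjugatedistance}, peeling off one factor at a time from the left. Write $P:=A_1A_2\cdots A_N$ and $Q:=B_1B_2\cdots B_N$, and set $Q_j:=B_jB_{j+1}\cdots B_N$ for $j=1,\dots,N$ (with $Q_{N+1}:=0$), so that $d(Q_j)\le \nu$ by hypothesis. The key observation is that
\[
d(A_1\cdots A_N,\; B_1\cdots B_N)\le \sum_{j=1}^N d\big(B_1\cdots B_{j-1}A_jQ_{j+1},\; B_1\cdots B_{j-1}B_jQ_{j+1}\big),
\]
obtained by inserting the intermediate products $B_1\cdots B_{j-1}A_jA_{j+1}\cdots A_N$ and telescoping; here one uses left-invariance of $d$ to cancel the common prefix $B_1\cdots B_{j-1}$ in each term, leaving
\[
d(A_1\cdots A_N,B_1\cdots B_N)\le \sum_{j=1}^N d\big(A_jQ_{j+1},\, B_jQ_{j+1}\big).
\]
Wait — the telescoping must be done carefully so that consecutive terms share a prefix; the clean way is to compare $B_1\cdots B_{j-1}A_j\cdots A_N$ with $B_1\cdots B_j A_{j+1}\cdots A_N$ for $j=1,\dots,N$, which differ only in the $j$-th slot, and after stripping the prefix $B_1\cdots B_{j-1}$ this is $d(A_jA_{j+1}\cdots A_N,\ B_jA_{j+1}\cdots A_N)=d(A_j^{-1}B_j)$ after stripping the suffix on the right — but $d$ is not right-invariant, so instead I strip only the left prefix and bound $d(A_j W,\ B_j W)$ where $W:=A_{j+1}\cdots A_N$.

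To estimate each term $d(A_jW, B_jW)$ with $W=A_{j+1}\cdots A_N$, write $d(A_jW,B_jW)=d\big((B_jW)^{-1}A_jW\big)=d\big(W^{-1}(B_j^{-1}A_j)W\big)$. Now $d(B_j^{-1}A_j)=d(A_j,B_j)=:\delta_j\le\nu$, and I need a bound on $d(W)$: since $W=A_{j+1}\cdots A_N$ and each $A_i$ is close to $B_i$ with $d(B_{j+1}\cdots B_N)=d(Q_{j+1})\le\nu$, one shows inductively (again via the same telescoping plus the triangle inequality, absorbing constants) that $d(W)\le C\nu$ for a constant depending only on $\nu$ and $N$ — or, better, one arranges the induction so that this bound is uniform. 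Then Proposition \ref{conjugatedistance} applied to $x=W$, $y=B_j^{-1}A_j$ gives
\[
d(W^{-1}(B_j^{-1}A_j)W)\le C_D\Big(\delta_j + d(W)^{1/s}\delta_j^{\frac{s-1}{s}} + d(W)^{\frac{s-1}{s}}\delta_j^{1/s}\Big)\le C'\,\delta_j^{1/s}
\]
since $\delta_j\le\nu$ and $d(W)\le C\nu$ make every term on the right controlled by a constant (depending on $\nu,N,s$) times $\delta_j^{1/s}$ (using $\delta_j\le \delta_j^{1/s}\nu^{(s-1)/s}$ etc.). Summing over $j=1,\dots,N$ yields $d(P,Q)\le K_\nu\sum_{j=1}^N d(A_j,B_j)^{1/s}$, as desired.

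The main obstacle I anticipate is making the bound on the "tail products" $d(W)=d(A_{j+1}\cdots A_N)$ clean and uniform: naively iterating the triangle inequality produces a constant that blows up with $N$, whereas the hypothesis $d(B_j\cdots B_N)\le\nu$ is exactly what should prevent this. The right bookkeeping is to carry along the inductive hypothesis $d(A_{j}\cdots A_N)\le K_\nu'\nu$ (rather than something growing in $N$), proving it simultaneously with the main estimate by downward induction on $j$ — at step $j$ one has $d(A_j\cdots A_N)\le d(A_j, B_j) + d(B_j) + (\text{conjugation error})+d(A_{j+1}\cdots A_N)$-type control, and since $d(A_j,B_j)\le\nu$, $d(B_j)\le d(B_j\cdots B_N)+d(B_{j+1}\cdots B_N)\le 2\nu$, and the error terms are lower order, the constant stabilizes. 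This is routine once set up correctly, and I expect the final $K_\nu$ to depend on $\nu$ and $s$ only (the $N$-dependence being absorbed precisely because of the hypothesis $d(B_j\cdots B_N)\le\nu$, which forces the $B_i$, and hence the $A_i$, to become small on average).
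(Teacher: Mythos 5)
The paper does not prove this lemma; it is cited verbatim from Magnani \cite[Lemma 3.7]{Mag13}, so there is no in-paper proof to compare against. Nevertheless your proposal contains a genuine gap, and it is precisely the one you flag at the end.

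Your telescoping replaces the $A_j$'s by $B_j$'s one slot at a time \emph{from the left}, so the intermediate products are $B_1\cdots B_{j-1}A_j\cdots A_N$, and after stripping the left prefix you are left with a conjugation of $B_j^{-1}A_j$ by $W=A_{j+1}\cdots A_N$. The entire argument then needs $d(W)\leq C\nu$ with $C$ independent of $N$, but this is \emph{false}. Take $B_j=0$ for every $j$ (so $d(B_j\cdots B_N)=0\leq\nu$) and $A_j=\exp(\nu E)$ for a fixed unit horizontal $E$; then $d(A_j,B_j)=\nu$, yet $d(A_{j+1}\cdots A_N)=(N-j)\nu$ is unbounded as $N\to\infty$. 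Note the lemma itself still holds in this example (here $\sum_j\delta_j^{1/s}=N\nu^{1/s}$ while the left side is $N\nu=\nu^{(s-1)/s}\cdot N\nu^{1/s}$), so the statement is fine but your route to it breaks: the hypothesis $d(B_j\cdots B_N)\leq\nu$ controls the tails of the $B$'s, not the tails of the $A$'s, and no bookkeeping in the downward induction will manufacture a uniform bound on $d(A_{j+1}\cdots A_N)$ out of thin air.

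The fix is to telescope in the other direction, keeping the tails made of $B$'s. Set $R_j:=A_1\cdots A_j\,B_{j+1}\cdots B_N$ for $j=0,\dots,N$, so $R_0=B_1\cdots B_N$ and $R_N=A_1\cdots A_N$. Then, writing $Q_{j+1}:=B_{j+1}\cdots B_N$ and stripping the left prefix $A_1\cdots A_{j-1}$,
\[
d(R_j,R_{j-1})=d\bigl(A_jQ_{j+1},\,B_jQ_{j+1}\bigr)=d\bigl(Q_{j+1}^{-1}(B_j^{-1}A_j)Q_{j+1}\bigr),
\]
and now the conjugating element $Q_{j+1}$ satisfies $d(Q_{j+1})\leq\nu$ \emph{by hypothesis}. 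Applying Proposition \ref{conjugatedistance} with $x=Q_{j+1}$, $y=B_j^{-1}A_j$ and using $\delta_j:=d(A_j,B_j)\leq\nu$ to absorb $\delta_j$ and $\delta_j^{(s-1)/s}$ into $\nu^{(s-1)/s}\delta_j^{1/s}$ gives $d(R_j,R_{j-1})\leq 3C_D\,\nu^{(s-1)/s}\delta_j^{1/s}$, and summing over $j$ yields the lemma with $K_\nu=3C_D\,\nu^{(s-1)/s}$. So the tools you chose are the right ones; only the orientation of the telescope must be reversed so that the hypothesis on $B$-tails is actually usable.
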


Our estimate for the distance between curves is given by the following lemma.

\begin{lemma}\label{closedirectioncloseposition}
Let $\bbG$ be a Carnot group of step $s$. Then there is a constant $C_{a}\geq 1$ depending on $\bbG$ for which the following is true.

Suppose $E\in V_{1}$ with $\omega(E)\leq 1$, $0\leq D\leq 1$ and $N\in \bbN$. Let $g\colon (-R,R) \to \bbG$ be a Lipschitz horizontal curve with $g(0)=0$ satisfying the following conditions:
\begin{enumerate}
\item $g$ is formed by joining of at most $N$ horizontal lines,
\item $|(p\circ g)'(t)-p(E)|\leq D$ whenever $(p\circ g)'(t)$ exists.
\end{enumerate}
Then $d(g(t),\exp(tE)) \leq C_{a}ND^{1/s^2}|t|$ for every $t\in (-R,R)$.
\end{lemma}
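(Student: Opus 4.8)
The plan is to compare the piecewise linear curve $g$ with the straight horizontal line $t\mapsto\exp(tE)$ by writing both curves as products of $N$ short horizontal segments and applying Lemma \ref{Magnani}. Fix $t\in(-R,R)$; assume $t>0$, the case $t<0$ being symmetric. Let $0=t_0<t_1<\cdots<t_N=t$ be chosen so that $g$ is a horizontal line on each interval $[t_{j-1},t_j]$; if $g$ has fewer than $N$ pieces on $[0,t]$ we simply allow some of the $t_j$ to coincide, so there is no loss in assuming exactly $N$ intervals. On the $j$'th interval $g$ has the form $g(t_{j-1})\exp((\tau-t_{j-1})E_j)$ for some $E_j\in V_1$ with $p(E_j)=(p\circ g)'$ on that interval, so $|p(E_j)-p(E)|\le D$ and hence $\omega(E_j)\le\omega(E)+D\le 2$. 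Set $\ell_j:=t_j-t_{j-1}$, so $\sum_j\ell_j=t$, and define the segment increments
\[
A_j:=\exp(\ell_j E_j),\qquad B_j:=\exp(\ell_j E).
\]
Then $g(t)=A_1A_2\cdots A_N$ and $\exp(tE)=B_1B_2\cdots B_N$, the latter because $E\in V_1$ so $\exp(\ell_1 E)\cdots\exp(\ell_N E)=\exp((\sum_j\ell_j)E)=\exp(tE)$.

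\textbf{Key estimates.} First I would bound $d(A_j,B_j)$. Since $A_j^{-1}B_j=\exp(\ell_j E)\exp(-\ell_j E_j)$, left-invariance gives $d(A_j,B_j)=d(\exp(\ell_j E)\exp(-\ell_j E_j))$. I want this controlled by $\ell_j D$ up to a constant. Using that $\ell_j\le t\le$ diam-type bounds are available only after restricting to a compact set; since $|t|<R$ we may first handle the range where $|t|$ and all relevant points stay in a fixed compact neighbourhood of the identity, and then remark that the claimed inequality is scaling-covariant: replacing $g$ by $\delta_{1/t}\circ(\tau\mapsto g(t\tau))$ reduces to the case $t=1$ (dilations rescale $d$ linearly, preserve the number of segments, and do not change the bound $D$ on $(p\circ g)'$, since $p$ commutes with dilations in degree one). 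So it suffices to prove the estimate for $t=1$, where everything lives in the compact ball $\{d\le 1+\text{const}\}$. There, Proposition \ref{euclideanheisenberg} compares $d$ with the Euclidean distance, and a direct BCH computation of $\exp(\ell_j E)\exp(-\ell_j E_j)$ shows its Euclidean norm is $O(\ell_j|p(E)-p(E_j)|)=O(\ell_j D)$ in the horizontal coordinates and $O(\ell_j D)$ in higher layers as well (each bracket contributes at least one factor $p(E)-p(E_j)$ together with bounded factors), whence $d(A_j,B_j)\le C\,\ell_j D$ for a constant $C=C(\bbG)$. Likewise $d(B_jB_{j+1}\cdots B_N)=d(\exp((\ell_j+\cdots+\ell_N)E))=(\ell_j+\cdots+\ell_N)\omega(E)\le 1=:\nu$, and $d(A_j,B_j)\le C\ell_j D\le\nu$ for $\Delta$, sorry, for $D$ small; if $C\ell_j D$ is not below $\nu$ one subdivides further, at the cost of a constant factor, or simply enlarges $\nu$ — either way Lemma \ref{Magnani} applies with a constant $K_\nu$ depending only on $\bbG$.

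\textbf{Conclusion.} Applying Lemma \ref{Magnani} and then $\sum_j\ell_j D^{1/s}=D^{1/s}$ gives, for $t=1$,
\[
d(g(1),\exp(E))\le K_\nu\sum_{j=1}^N d(A_j,B_j)^{1/s}\le K_\nu\sum_{j=1}^N (C\ell_j D)^{1/s}\le K_\nu C^{1/s} N^{1-1/s}D^{1/s},
\]
using $\sum_j\ell_j^{1/s}\le N^{1-1/s}(\sum_j\ell_j)^{1/s}=N^{1-1/s}$ by Hölder. This already yields a bound of the form $C_a N D^{1/s}$; the slightly weaker exponent $D^{1/s^2}$ claimed in the lemma is then immediate since $D\le 1$ implies $D^{1/s}\le D^{1/s^2}$. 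Undilating (multiplying through by $|t|$ and replacing $g$ by the original curve) gives $d(g(t),\exp(tE))\le C_a N D^{1/s^2}|t|$ for all $t\in(-R,R)$, with $C_a$ depending only on $\bbG$.

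\textbf{Main obstacle.} The delicate point is the per-segment estimate $d(A_j,B_j)\lesssim\ell_j D$: one must be careful that both curves share the same \emph{length-type} scale $\ell_j$ (not just that their horizontal velocities are close), and that BCH terms of all orders are genuinely controlled by a single power of $|p(E)-p(E_j)|$ times bounded quantities. The reduction to $t=1$ by dilation is what makes the compactness available and keeps all constants dependent only on $\bbG$; verifying that this reduction is legitimate (segments, horizontality, and the bound $D$ all transform correctly) is the other thing that needs a careful line.
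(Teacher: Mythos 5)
Your overall skeleton (decompose $g$ into $N$ segments, apply Lemma \ref{Magnani}, use the Euclidean--CC comparison) matches the paper's, but the per-segment estimate $d(A_j,B_j)\le C\,\ell_j D$ is wrong, and this is where the exponent $1/s^2$ actually comes from. Proposition \ref{euclideanheisenberg} gives $C_{\mathrm H}^{-1}|x-y|\le d(x,y)\le C_{\mathrm H}|x-y|^{1/s}$; you need the \emph{upper} bound, and it carries a power $1/s$, not a power $1$. Even though the Euclidean coordinates of $A_j^{-1}B_j$ are all $O(\ell_j D)$ (the horizontal ones are exactly $\ell_j|p(E_j)-p(E)|$, and every higher-layer coordinate picks up at least one factor of $E-E_j$), the CC distance is generically of order $D^{1/s}$, not $D$. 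A concrete counterexample to your claim: in the Heisenberg group ($s=2$), take $E=X_1$, $E_j=X_1+D X_2$, $\ell_j=1$; then $\exp(-E)\exp(E_j)=\exp(DX_2-\tfrac{D}{2}X_3)$ and $d$ of this point is of order $D^{1/2}$, not $D$. So the step ``whence $d(A_j,B_j)\le C\ell_j D$'' fails. Correcting it to $d(A_j,B_j)\le C_{\mathrm H}(\ell_j D)^{1/s}$ and then running your Magnani plus H\"older argument gives $\sum_j d(A_j,B_j)^{1/s}\le C_{\mathrm H}^{1/s}D^{1/s^2}\sum_j\ell_j^{1/s^2}\le C_{\mathrm H}^{1/s}N^{1-1/s^2}D^{1/s^2}$, which recovers the lemma (in fact with a slightly better dependence on $N$ than the paper's $N$, since they drop the $t_j/t$ factors and do not use H\"older). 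Your apparent improvement to $D^{1/s}$ at the end of the argument should therefore have been a warning sign; the paper's $1/s^2$ exponent is there precisely because the Euclidean-to-CC comparison costs a power $1/s$ and Magnani's lemma costs another.

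Two smaller remarks. The dilation reduction to $t=1$ is legitimate and correctly justified (dilations are homomorphisms that scale $p$ linearly in degree one, so horizontality, the segment count, and the bound $D$ on $(p\circ g)'$ are all preserved); the paper achieves the same normalization implicitly by writing $A_j=\exp((t_j/t)E_j)$, $B_j=\exp((t_j/t)E)$. For the hypothesis $d(A_j,B_j)\le\nu$ of Lemma \ref{Magnani}, you need not ``subdivide further'' or wave at small $D$: the paper simply bounds $d(A_j,B_j)\le d(\exp(E_j))+d(\exp(E))=\omega(E_j)+\omega(E)\le 3$ and takes $\nu=3$, which is cleaner and unconditional.
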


\begin{proof}
Fix $t\geq 0$ without loss of generality. We may write
\[g(t)=\exp(t_{1}E_{1})\exp(t_{2}E_{2})\cdots \exp(t_{N}E_{N}),\]
where $t=t_{1}+t_{2}+\ldots +t_{N}$ with $t_{i}\geq 0$ and $E_{j}\in V_{1}$ with $|p(E_{j})-p(E)|\leq D$ for $1\leq j\leq N$. We intend to apply Lemma \ref{Magnani} to estimate
\[\frac{d(g(t),\exp(tE))}{|t|} = d\left(\exp\Big(\frac{t_{1}E_{1}}{t}\Big)\cdots \exp\Big(\frac{t_{N}E_{N}}{t}\Big),\, \exp\Big(\frac{t_{1}E}{t}\Big)\cdots \exp\Big(\frac{t_{N}E}{t}\Big)\right).\]
We first check that the hypotheses of Lemma \ref{Magnani} hold with the choice
\[A_{j}:=\exp ((t_{j}/t)E_{j}), \qquad B_{j}:=\exp((t_{j}/t)E), \qquad \nu:=3.\]
We first notice
\begin{align*}
d(B_{j}B_{j+1}\cdots B_{N}) &= d\left( \exp\left( \frac{(t_{j}+\ldots + t_{N})E}{t}\right) \right)\\
&= \frac{(t_{j}+\ldots + t_{N})}{t}d(\exp(E))\\
&\leq 1.
\end{align*}
Second, we can estimate as follows
\begin{align*}
d(A_{j},B_{j})&=(t_{j}/t)d(\exp(E_{j}),\exp(E))\\
&\leq d(\exp(E_{j}))+d(\exp(E))\\
&\leq 3.
\end{align*}
We can then combine Lemma \ref{Magnani} with Proposition \ref{euclideanheisenberg} to get
\begin{align*}
\frac{d(g(t),\exp(tE))}{|t|} &\leq K_{3} \sum_{j=1}^{N} d(\exp((t_{j}/t)E_{j}),\, \exp( (t_{j}/t)E))^{1/s}\\
& \leq K_{3} \sum_{j=1}^{N} d(\exp(E_{j}),\, \exp(E))^{1/s}\\
& \leq C \sum_{j=1}^{N} |\exp(E_{j})-\exp(E)|^{1/s^2}\\
& \leq C \sum_{j=1}^{N} |p(E_{j})-p(E)|^{1/s^2}\\
& \leq C ND^{1/s^2}.
\end{align*}
The proof is complete noticing that $C\geq 1$ is a constant depending only on $\bbG$.
\end{proof}

\section{Almost maximal directional derivatives and the UDS}\label{sectionUDS}

In this section we fix a Carnot group $\bbG$ satisfying the following condition.

\begin{assumptions}\label{ass}
Assume $\bbG \neq \bbR$. We say that $\bbG$ admits a ball of uniformly deformable directions if there exists an open ball $B\subset V_{1}$ of directions such that every $E\in B$ is deformable with the same parameters $C_{B}, N_{B}$ and $\Delta_{B}$.
\end{assumptions}

We will show that every Carnot group $\bbG$ which admits a ball of uniformly deformable directions contains a CC-Hausdorff dimension one (hence measure zero) set $N$ so that almost maximality of a directional derivative $Ef(x)$ implies differentiability if $x\in N$ and $E\in B$. Combining this with Proposition \ref{DoreMaleva} will lead to a proof of Theorem \ref{maintheorem}. By Theorem \ref{deformFiliform}, all model filiform groups $\bbE_{n}$  with $n\geq 2$ admit a ball of uniformly deformable directions. In particular, Theorem \ref{maintheorem} applies to Carnot groups of arbitrarily high step.

The Carnot group $\bbG$ will be identified with $\bbR^{n}$ by means of exponential coordinates of the first kind. Let $B_{\bbQ}$ denote the set of $E\in B$ with $\omega(E)=1$ which are a rational linear combination of the basis vectors $X_{1}, \ldots, X_{r}$ of $V_{1}$. Note that $B_{\bbQ}$ is dense in $B$ since the Euclidean sphere contains a dense set of points with rational coordinates. The construction of our universal differentiability set is given by the following lemma.

\begin{lemma}\label{uds}
For each choice of $E \in B_{\bbQ}$, $u\in \bbQ^{n}$ with $d(u)\leq 1$ and rationals
\[0<s<1, \qquad \eta>0, \qquad 0 < \Delta < \Delta_{B}(\eta),\]
let $\gamma_{E, u, s, \Delta, \eta}$ denote a curve granted by Definition \ref{deform} with parameters $C_{B}, N_{B}, \Delta_{B}(\eta)$. 

Let $L$ be the countable union of images of all translated curves $x\gamma_{E, u, s, \Delta}$, where $x\in \bbQ^{n}$ and $E, u, s, \Delta$ are as above. Then there is a $G_{\delta}$ set $N\subset \bbG$ containing $L$ which has Hausdorff dimension one with respect to the CC metric.
\end{lemma}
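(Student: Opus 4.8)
The plan is to produce $N$ as a countable intersection of open neighborhoods of the countable family $L$, chosen so tightly that the Hausdorff dimension (with respect to the CC metric) of $N$ does not exceed one; since each horizontal line segment has CC Hausdorff dimension exactly one, and $L$ is a countable union of images of horizontal curves (each a concatenation of at most $N_B$ horizontal lines by Definition \ref{deform}), $L$ itself has CC Hausdorff dimension one, and the construction is designed to not inflate this. First I would enumerate the countable set $L$. The index set is a countable product of countable sets: $E\in B_{\bbQ}$ (countable, as a subset of the rational points of the sphere in $V_1$), $u\in\bbQ^n$ with $d(u)\le 1$, rationals $0<s<1$, $\eta>0$, $0<\Delta<\Delta_B(\eta)$, and the translation parameter $x\in\bbQ^n$. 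For each such tuple the curve $\gamma_{E,u,s,\Delta,\eta}$ is fixed by Definition \ref{deform}, and its image is a compact subset of $\bbG$; translating by $x$ keeps it compact. So $L=\bigcup_{k=1}^\infty L_k$ with each $L_k$ a compact piecewise-horizontal-linear curve.

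The key covering step is as follows. Each $L_k$, being a union of at most $N_B$ horizontal line segments, can be covered for every $\rho>0$ by $O_k(1/\rho)$ CC-balls of radius $\rho$; here one uses that a horizontal line segment $t\mapsto p\exp(tE)$ has CC length comparable to its parameter length (Lemma \ref{horizontaldistances}), so it is covered by $\asymp (\text{length})/\rho$ balls of CC-radius $\rho$, giving $\sum_j \rho^1 \lesssim \text{length}$ — i.e. its CC $1$-dimensional Hausdorff measure is finite, hence its CC Hausdorff dimension is $\le 1$. For the open set: given any sequence $\ep_k\downarrow 0$, let $U_k$ be the open $\ep_k$-neighborhood of $L_k$ in the CC metric, and set $N:=\bigcap_{m=1}^\infty \bigcup_{k\ge 1} U_k^{(m)}$ where $U_k^{(m)}$ is the open $\ep_k^{(m)}$-neighborhood with $\ep_k^{(m)}\downarrow 0$ as $m\to\infty$ fast enough. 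Concretely, choose for each $k$ and each $m$ a radius $\ep_k^{(m)}>0$ so small that $L_k$ (hence its $\ep_k^{(m)}$-neighborhood, once $\ep_k^{(m)}$ is small relative to the covering scale) can be covered by finitely many CC-balls $B(z_{k,m,i},r_{k,m,i})$ with $\sum_i r_{k,m,i}^{1+1/m} \le 2^{-k-m}$; this is possible precisely because $L_k$ has finite (indeed $\sigma$-finite, being a finite union of rectifiable-in-CC curves) $\mathcal H^1$-measure in the CC metric, so $\mathcal H^{1+1/m}(L_k)=0$, and one may take $\ep_k^{(m)}$ below the mesh of such an economical cover. Then $N_m:=\bigcup_k U_k^{(m)}$ is open, contains $L$, and satisfies $\mathcal H^{1+1/m}(N_m)\le \sum_k 2^{-k-m}\le 2^{-m}$. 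Setting $N:=\bigcap_m N_m$, this is a $G_\delta$ set containing $L$, and for every $m$ we have $\mathcal H^{1+1/m}(N)\le \mathcal H^{1+1/m}(N_m)\le 2^{-m}$, whence $\mathcal H^{1+1/m}(N)=0$ for all $m$; therefore the CC Hausdorff dimension of $N$ is at most $1$. Since $N\supseteq L$ contains horizontal line segments, its dimension is exactly $1$ (and it has measure zero, the homogeneous dimension $Q$ being strictly greater than $1$ because $\bbG\ne\bbR$).

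The main obstacle I anticipate is the bookkeeping that makes the neighborhoods both open and dimension-non-increasing simultaneously: one must choose the radii $\ep_k^{(m)}$ \emph{after} fixing an economical $\mathcal H^{1+1/m}$-cover of the compact set $L_k$, using that a small enough open neighborhood of a compact set is contained in a slight fattening of any given finite cover of it — this requires only elementary compactness, but it is the point where one must be careful that the estimate $\sum_i r_i^{1+1/m}$ over the fattened balls stays controlled (e.g. by doubling radii and the Besicovitch-type bound, or simply by starting from a cover of $\{x : d(x,L_k)\le \ep_k^{(m)}\}$ directly). A secondary routine point is confirming $L$ is genuinely countable, i.e. that all the parameters $(E,u,s,\Delta,\eta,x)$ range over countable sets and that Definition \ref{deform} lets us fix one curve per tuple; and that the specific curves chosen indeed have images that are finite unions of horizontal line segments with CC length bounded in terms of $N_B$ and the (bounded) parameters, so that $\mathcal H^1(L_k)<\infty$ uniformly enough to run the argument. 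None of this uses deformability beyond the fact that the curves of Definition \ref{deform} are concatenations of at most $N_B$ horizontal lines.
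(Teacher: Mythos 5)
Your overall plan — enumerate $L=\bigcup_k L_k$ as a countable family of compact concatenations of at most $N_B$ horizontal line segments, cover each $L_k$ economically, take shrinking open neighborhoods, and intersect — is the right one and matches the construction behind the cited proof in the step-two setting. The identification of the countability of the index set, the use of $\mathcal H^{1+1/m}(L_k)=0$ (since each $L_k$ is CC-rectifiable of finite $\mathcal H^1$-measure), and the compactness argument to fit a small open neighborhood inside a given finite cover are all correct and necessary.

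However, the line ``$\mathcal H^{1+1/m}(N_m)\le\sum_k 2^{-k-m}\le 2^{-m}$'' is a genuine error. The set $N_m$ is open, so it has positive Lebesgue ($=\mathcal H^Q$) measure, and since $1+1/m<Q$ (here $Q\ge 3$ for $\bbG\ne\bbR$), in fact $\mathcal H^{1+1/m}(N_m)=\infty$. What the cover actually gives you is a bound on the \emph{pre-measure at the scale of the mesh}: if the covering balls all have radius $<\rho_m$, then $\mathcal H^{1+1/m}_{\rho_m}(N_m)\le 2^{-m}$, but this does not pass to the full Hausdorff measure of $N_m$ as $\rho\downarrow 0$, precisely because $N_m$ is open. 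Consequently the deduction ``$\mathcal H^{1+1/m}(N)\le\mathcal H^{1+1/m}(N_m)$'' does not establish $\mathcal H^{1+1/m}(N)=0$.

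The repair is short but must be made explicit. When choosing the finite cover of $L_k$ at stage $m$, also insist that all radii are $<1/m$ (possible since $\mathcal H^{1+1/m}(L_k)=0$ allows covers of arbitrarily small mesh). Then $N\subset N_m$ is covered, for each $m$, by balls of radius $r_i<1/m$ with $\sum_i r_i^{1+1/m}\le 2^{-m}$. Now fix $\alpha>1$ and choose $m$ so large that $1+1/m<\alpha$ and $1/m<1$. Since $r_i<1$, we have $r_i^\alpha\le r_i^{1+1/m}$, hence $\mathcal H^\alpha_{1/m}(N)\le\sum_i r_i^\alpha\le\sum_i r_i^{1+1/m}\le 2^{-m}$. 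Letting $m\to\infty$ gives $\mathcal H^\alpha(N)=0$, so $\dim_H(N)\le 1$ and, since $L\subset N$ contains horizontal segments, $\dim_H(N)=1$. In short: your construction is sound, but you must reason with the dyadic-scale pre-measures $\mathcal H^\alpha_{\rho}$ and a diagonalization over $\alpha>1$, not with $\mathcal H^{1+1/m}(N_m)$ itself.
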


The proof of Lemma \ref{uds} is essentially the same as that of \cite[Lemma 5.4]{LPS17}. We also recall the following mean value type lemma for future use \cite[Lemma 3.4]{Pre90}.

\begin{lemma}\label{preissmeanvalue}
Suppose $|\zeta|<s<\rho$, $0<v<1/32$, $\sigma>0$ and $L>0$ are real numbers and let $\varphi, \psi\colon \mathbb{R} \to \mathbb{R}$ be Lipschitz maps satisfying $\mathrm{Lip}_{\mathbb{E}}(\varphi)+\mathrm{Lip}_{\mathbb{E}}(\psi)\leq L$, $\varphi(t)=\psi(t)$ for $|t|\geq s$ and $\varphi(\zeta)\neq \psi(\zeta)$. Suppose, moreover, that $\psi'(0)$ exists and that
\[|\psi(t)-\psi(0)-t\psi'(0)|\leq \sigma L|t|\]
whenever $|t|\leq \rho$,
\[\rho\geq s\sqrt{(sL)/(v|\varphi(\zeta)-\psi(\zeta)|)},\]
and
\[\sigma \leq v^{3}\Big( \frac{\varphi(\zeta)-\psi(\zeta)}{sL} \Big)^{2}.\]
Then there is $\tau\in (-s,s)\setminus \{\zeta\}$ such that $\varphi'(\tau)$ exists,
\[\varphi'(\tau)\geq \psi'(0)+v|\varphi(\zeta)-\psi(\zeta)|/s,\]
and
\[|(\varphi(\tau+t)-\varphi(\tau))-(\psi(t)-\psi(0))|\leq 4(1+20v)\sqrt{(\varphi'(\tau)-\psi'(0))L}|t|\]
for every $t\in \mathbb{R}$.
\end{lemma}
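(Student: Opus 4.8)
\textbf{Proof proposal for Lemma \ref{preissmeanvalue}.}

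The plan is to follow Preiss's original argument from \cite{Pre90} adapted to our notation, treating the statement as a purely one-dimensional real-analysis fact: it asserts that if $\varphi$ and $\psi$ agree outside $[-s,s]$ but differ at the interior point $\zeta$, and $\psi$ is very flat near $0$ (derivative $\psi'(0)$ with a quadratically small error governed by $\sigma$), then $\varphi$ must somewhere have a derivative strictly exceeding $\psi'(0)$ by a definite amount proportional to $|\varphi(\zeta)-\psi(\zeta)|/s$, and moreover the increments of $\varphi$ and $\psi$ stay close on a scale dictated by that excess. First I would normalize: by replacing $\varphi,\psi$ with $\varphi-\psi(0)-t\psi'(0)$ and $\psi-\psi(0)-t\psi'(0)$ (this does not change Lipschitz constants by more than a harmless factor absorbed into $L$, or more carefully one works with the difference $h:=\varphi-\psi$ and the function $\psi$ separately), one reduces to $\psi'(0)=0$, $\psi(0)=0$, so the target inequalities become $\varphi'(\tau)\ge v|\varphi(\zeta)-\psi(\zeta)|/s$ and an increment bound with $\psi'(0)$ removed.

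Next I would run the standard selection-of-a-good-point argument. Set $\delta:=|\varphi(\zeta)-\psi(\zeta)|>0$ and, WLOG $\varphi(\zeta)>\psi(\zeta)$ so $h(\zeta)=\delta>0$ while $h\equiv 0$ outside $(-s,s)$. One considers, among all $\tau$ where $\varphi'(\tau)$ exists (a.e.\ $\tau$, since $\varphi$ is Lipschitz), those for which the ``slope from the left'' or an appropriate auxiliary quantity is large; concretely Preiss maximizes a functional like $\tau\mapsto \varphi(\tau)-K t$ or examines points where $h$ rises most steeply. The point $\tau$ is extracted by a maximum/extremal argument: because $h$ climbs from $0$ up to at least $\delta$ and back down within a window of length $<2s$, there must be a point where $\varphi'$ is at least roughly $\delta/s$ above the comparison slope; the factor $v$ and the constant $1/32$ give the slack needed to make the extremal point an honest interior point of $(-s,s)\setminus\{\zeta\}$ and to guarantee the derivative exists there. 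The hypotheses $\rho\ge s\sqrt{sL/(v\delta)}$ and $\sigma\le v^3(\delta/(sL))^2$ are exactly what is needed so that the flatness of $\psi$ on the larger interval $[-\rho,\rho]$ controls the error over the whole range relevant to the increment estimate: one uses that outside $[-s,s]$, $\varphi=\psi$, so for $|t|$ up to $\rho$ the increment of $\varphi$ past $\tau$ is controlled by the increment of $\psi$ (small, by the $\sigma$-flatness) plus what happens inside $[-s,s]$ (controlled by the Lipschitz bound $L$ on a window of length $2s$), and the two contributions are balanced precisely when $\rho$ has the stated size.

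For the final increment inequality $|(\varphi(\tau+t)-\varphi(\tau))-(\psi(t)-\psi(0))|\le 4(1+20v)\sqrt{(\varphi'(\tau)-\psi'(0))L}\,|t|$, I would split into the regime $|t|$ small (where one Taylor-expands: $\varphi(\tau+t)-\varphi(\tau)\approx \varphi'(\tau)t$ up to an error, and $\psi(t)\approx\psi'(0)t$ by flatness, so the difference is $\approx(\varphi'(\tau)-\psi'(0))t$, bounded by $\sqrt{(\varphi'(\tau)-\psi'(0))L}|t|$ once $|t|\lesssim (\varphi'(\tau)-\psi'(0))/L \cdot(\text{something})$ using $|t|\le \sqrt{(\varphi'(\tau)-\psi'(0))/L}\cdot\sqrt{L|t|/(\ldots)}$) and the regime $|t|$ large (where both $\varphi(\tau+t)-\varphi(\tau)$ and $\psi(t)-\psi(0)$ are individually bounded by $L|t|$, and $L|t|\le \sqrt{(\varphi'(\tau)-\psi'(0))L}|t|$ provided $|t|$ is large enough that the crossover scale $\sqrt{L/(\varphi'(\tau)-\psi'(0))}\cdot(\ldots)$ has been passed — again this is where $\rho$ enters). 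The constant $4(1+20v)$ is bookkeeping from combining these two estimates at the crossover scale. I expect the \textbf{main obstacle} to be the delicate choice of the extremal point $\tau$ and verifying it lies strictly inside $(-s,s)$, avoids $\zeta$, and simultaneously satisfies both the lower bound on $\varphi'(\tau)$ and makes the two-regime split in the increment estimate match up at a common crossover scale; getting the explicit constants ($v<1/32$, the factor $4(1+20v)$, the exponents in the hypotheses on $\rho$ and $\sigma$) to close requires careful tracking, but no idea beyond Preiss's. Since this lemma is quoted verbatim from \cite[Lemma 3.4]{Pre90}, in the paper itself one would simply cite it rather than reproduce the proof.
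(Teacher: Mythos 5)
The paper does not prove Lemma \ref{preissmeanvalue}; it is recalled verbatim from \cite[Lemma 3.4]{Pre90}, and the only addition the paper makes is Remark \ref{meanvalueremark}, which observes that $\tau$ may additionally be chosen outside any prescribed Lebesgue-null subset of $\mathbb{R}$. You correctly identify this in your closing sentence, which is the intended reading.

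Your heuristic outline --- normalize so that $\psi(0)=\psi'(0)=0$, select an extremal $\tau$ by perturbing $\varphi$ linearly, then match a small-$|t|$ Taylor regime against a large-$|t|$ Lipschitz regime at a crossover scale governed by $\sqrt{(\varphi'(\tau)-\psi'(0))/L}$, using the hypotheses on $\rho$ and $\sigma$ to balance the errors --- is broadly faithful to the shape of Preiss's argument. But as you note yourself, it is a sketch and not a proof: the essential and delicate content is the precise extremal selection of $\tau$ and the verification that this single point simultaneously yields the derivative lower bound $\varphi'(\tau)\geq\psi'(0)+v|\varphi(\zeta)-\psi(\zeta)|/s$ and the global increment estimate with the explicit constant $4(1+20v)$, neither of which is carried out. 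Since the correct move in the context of this paper is to cite \cite{Pre90} rather than reprove the lemma, there is no gap in what you propose; just be aware that the sketch on its own would not stand as a proof.
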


\begin{remark}\label{meanvalueremark}
By examining the proof of Lemma \ref{preissmeanvalue} in \cite{Pre90}. one can see that $\tau$ can additionally be chosen outside a given Lebesgue measure zero subset of $\mathbb{R}$.
\end{remark}

From now on we fix a set $N\subset \bbG$ as given by Lemma \ref{uds}. 

\begin{notation}\label{D^f}
For any Lipschitz function $f:\bbG \to \bbR$, define:
\[D^{f}:=\{ (x,E) \in N\times V_{1} \colon \omega(E)=1,\, Ef(x) \mbox{ exists}\}.\]
\end{notation}

\begin{theorem}\label{almostmaximalityimpliesdifferentiability}
Let $\bbG$ be a Carnot group of step $s$ which admit a ball of uniformly deformable directions (Assumptions \ref{ass}).

Let $f\colon \bbG \to \bbR$ be Lipschitz with $\mathrm{Lip}_{\bbG}(f) \leq 1/2$ and suppose $(x_{\ast}, E_{\ast})\in D^{f}$ with $E_{\ast}\in B$. Let $M$ denote the set of pairs $(x,E)\in D^{f}$ such that $Ef(x)\geq E_{\ast}f(x_{\ast})$ and for every $t\in (-1,1)$:
\begin{align*}
& |(f(x\exp(tE_{\ast}))-f(x)) - (f(x_{\ast}\exp(tE_{\ast}))-f(x_{\ast}))| \\
& \qquad \leq 6|t| (  (Ef(x)-E_{\ast}f(x_{\ast}))\mathrm{Lip}_{\bbG}(f))^{1/2s^{2}}.
\end{align*}
If
\[\lim_{\delta \downarrow 0} \sup \{Ef(x)\colon (x,E)\in M \mbox{ and }d(x,x_{\ast})\leq \delta\}\leq E_{\ast}f(x_{\ast}),\]
then $f$ is differentiable at $x_{\ast}$ and its Pansu differential is given by
\[L(x):=E_{\ast}f(x_{\ast})\langle x , E_{\ast}(0) \rangle=E_{\ast}f(x_{\ast})\langle p(x) , p(E_{\ast}) \rangle.\]
\end{theorem}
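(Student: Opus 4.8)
\textbf{Proof strategy for Theorem \ref{almostmaximalityimpliesdifferentiability}.}
The plan is to follow the Euclidean blueprint of \cite{Pre90} adapted to Carnot groups, as done in \cite{PS16, LPS17}, the new ingredient being that we must run the argument along \emph{deformed} horizontal lines rather than genuine ones. We argue by contradiction: if $f$ is not differentiable at $x_\ast$, then by Lemma \ref{distanceinequality} and the formula for the candidate Pansu differential $L$, we know that the directional derivative $E_\ast f(x_\ast)$ is the slope of $L$ in direction $E_\ast$, so non-differentiability forces the existence of points $y_k\to x_\ast$ and a $c>0$ with $f(y_k)-f(x_\ast)-L(x_\ast^{-1}y_k)$ of size at least $c\,d(x_\ast,y_k)$. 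The first step is to convert this into a one-dimensional statement: along the horizontal line $t\mapsto x_\ast\exp(tE_\ast)$, the function $\psi(t):=f(x_\ast\exp(tE_\ast))-f(x_\ast)$ has derivative $\psi'(0)=E_\ast f(x_\ast)$ at $0$, and the failure of differentiability (together with the distance inequality $d(x,y)\ge |p(y)-p(x)|$ and the structure of $L$) should be used to produce, for some $u\in\bbG$ with $d(u)$ small and the associated $\zeta=\langle\delta_{\Delta s}(u),E_\ast(0)\rangle$, a competitor curve along which $f$ grows strictly faster than $\psi$. Here one plugs $u$, $s$, $\Delta$, $\eta$ into a deformability curve $g$ from Definition \ref{deform} (legitimate since $E_\ast\in B$), sets $\varphi(t):=f(x_\ast g(t))-f(x_\ast)$ reparametrized as in Remark \ref{deform2}, and notes $\varphi(t)=\psi(t)$ for $|t|\ge s$ while $\varphi(\zeta)\ne\psi(\zeta)$; the gap $|\varphi(\zeta)-\psi(\zeta)|$ is comparable to $d(u)\Delta s$, i.e. comparable to $d(x_\ast, x_\ast g(\zeta))$ times the non-differentiability constant.

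The second step is to apply the mean value Lemma \ref{preissmeanvalue} (with Remark \ref{meanvalueremark}, so that the resulting parameter $\tau$ avoids the measure-zero set of non-differentiability of $f$ along the curve) to $\varphi$ and $\psi$. This yields a $\tau$ with $\varphi'(\tau)\ge\psi'(0)+v|\varphi(\zeta)-\psi(\zeta)|/s$ and a uniform closeness estimate between the increments of $\varphi$ and of $\psi$. Setting $x:=x_\ast g(\tau)$ and letting $E$ be the (deformed) direction of $g$ at $\tau$ — a horizontal direction close to $E_\ast$ because condition (4) of Definition \ref{deform} bounds $|(p\circ g)'(t)-p(E_\ast)|\le C_B\Delta$ — one gets that $Ef(x)$ exists and $Ef(x)$ strictly exceeds $E_\ast f(x_\ast)$ by a definite amount proportional to the non-differentiability gap. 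Normalizing $E$ to have $\omega(E)=1$ changes things by $O(\Delta)$, which is harmless. Crucially, the point $x$ lies on a translate of a deformability curve with rational data, so after a density/approximation argument (using $\bbQ^n$ dense in the CC topology, $B_\bbQ$ dense in $B$, and continuity of all quantities in the parameters) $x$ can be taken in $L\subset N$, hence $(x,E)\in D^f$.

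The third step is to verify that this pair $(x,E)$ lands in the set $M$: the inequality $Ef(x)\ge E_\ast f(x_\ast)$ is immediate, and the quantitative increment-closeness bound defining $M$ must be extracted from the last conclusion of Lemma \ref{preissmeanvalue}, namely $|(\varphi(\tau+t)-\varphi(\tau))-(\psi(t)-\psi(0))|\le 4(1+20v)\sqrt{(\varphi'(\tau)-\psi'(0))L}\,|t|$, combined with the deformation estimates of Lemma \ref{closedirectioncloseposition} to pass from increments of $f$ along the curve $g$ to increments of $f$ along the genuine horizontal line $t\mapsto x\exp(tE_\ast)$ — this is exactly where the $1/2s^2$ exponent and the constant $6$ in the statement of $M$ come from (the step-$s$ H\"older loss in Proposition \ref{euclideanheisenberg} and Lemma \ref{Magnani} being applied twice). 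Since $d(x,x_\ast)=d(x_\ast, x_\ast g(\tau))\le \mathrm{Lip}_\bbG(g)\,|\tau|$ can be made as small as we like, $(x,E)\in M$ with $d(x,x_\ast)\le\delta$ and $Ef(x)\ge E_\ast f(x_\ast)+(\text{fixed positive amount})$, contradicting the hypothesis $\lim_{\delta\downarrow 0}\sup\{Ef(x):(x,E)\in M,\ d(x,x_\ast)\le\delta\}\le E_\ast f(x_\ast)$. Hence $f$ is differentiable at $x_\ast$; that the Pansu differential is the stated $L$ then follows because $L$ is $\bbG$-linear with the correct directional derivative in direction $E_\ast$ and a $\bbG$-linear map is determined by enough directional data (or: the differential, which exists, must have directional derivative $E_\ast f(x_\ast)$ in direction $E_\ast$, and by the maximality/near-extremal structure this pins it down via Lemma \ref{distanceinequality}). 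The main obstacle I anticipate is the bookkeeping in the second and third steps: choosing the parameters $s,\Delta,\eta,v,\rho$ in the correct order so that all smallness conditions of Lemma \ref{preissmeanvalue} are met simultaneously while keeping the gap $|\varphi(\zeta)-\psi(\zeta)|$ comparable to $d(x_\ast,x)$, and then tracking the composition of H\"older exponents through Lemma \ref{closedirectioncloseposition} and Lemma \ref{Magnani} carefully enough to hit the precise exponent $1/(2s^2)$ and constant $6$ demanded by the definition of $M$.
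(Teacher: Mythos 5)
Your overall strategy matches the paper's: argue by contradiction, build a deformed horizontal curve $g$ through a point witnessing the non-differentiability estimate, apply Lemma \ref{preissmeanvalue} with the refinement of Remark \ref{meanvalueremark}, and extract a pair $(x,E)\in D^f$ with $Ef(x)$ exceeding $E_{\ast}f(x_{\ast})$ by a definite amount and lying in the restricted class $M$, contradicting the almost-maximality hypothesis. The quantitative bookkeeping you flag (matching the constant $6$ and exponent $1/(2s^2)$ via Lemma \ref{closedirectioncloseposition}) is indeed the correct mechanism and the paper executes it as you describe.

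However, there is a genuine gap in how you propose to land $x$ inside $N$. You set $\psi(t):=f(x_{\ast}\exp(tE_{\ast}))-f(x_{\ast})$ and $\varphi(t):=f(x_{\ast}g(t))-f(x_{\ast})$, and you invoke the agreement $\varphi(t)=\psi(t)$ for $|t|\geq s$. That agreement holds only when $g$ is the curve built from the \emph{exact} data $x_{\ast}$, $E_{\ast}$, $u$. But with exact data the image of $g$ has no reason to lie in the $G_{\delta}$ set $N$, which was built in Lemma \ref{uds} as a countable union of images of curves with \emph{rational} data. You say you will "after a density/approximation argument" replace $x=g(\tau)$ by a nearby point of $L\subset N$, but this does not work: directional derivatives of Lipschitz functions are not continuous in $(x,E)$ (they need not even exist at the perturbed point), so the lower bound $Ef(x)>E_{\ast}f(x_{\ast})+c$ and the membership $(x,E)\in M$ do not pass to a nearby rational curve. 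The paper resolves this by rationalizing \emph{before} Lemma \ref{preissmeanvalue} is applied: it chooses $\tilde{x}_{\ast}\in\bbQ^n$ and $\tilde{E}_{\ast}\in B_{\bbQ}$ and builds $g$ with that rational data, so that $g(\tau)\in N$ automatically. The price is that then $g(t)=\tilde{x}_{\ast}\exp(t\tilde{E}_{\ast})$ for $|t|\geq s$, which no longer agrees with the natural comparison curve $x_{\ast}\exp(tE_{\ast})$. This forces the construction of the auxiliary \emph{bridge curve} $h$ (the Claim in the paper's proof, built via Remark \ref{deform2}) which equals $\tilde{x}_{\ast}\exp(t\tilde{E}_{\ast})$ for $|t|\geq s$, equals $x_{\ast}\exp(tE_{\ast})$ for $|t|\leq s/2$, and interpolates with controlled Lipschitz constant and direction. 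Taking $\psi=f\circ h$ then yields both $\varphi(t)=\psi(t)$ for $|t|\geq s$ and $\psi'(0)=E_{\ast}f(x_{\ast})$, making the mean value lemma applicable. This bridge curve, and the ordering of rationalization before the mean value lemma, is a structural ingredient your proposal is missing, not just bookkeeping.
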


\begin{proof}

We assume $\mathrm{Lip}_{\bbG}(f)>0$, since otherwise the statement is trivial. Fix the following parameters:
\begin{enumerate}
\item $\varepsilon>0$ rational,
\item $0< v<1/32$ rational such that $4(1+20v)\sqrt{(2+v)/(1-v)}+v < 6$,
\item $\eta=\varepsilon v^{3}/3200$,
\item $\Delta_{B}(\eta/2)$, $C_{B}$ and $C_{a}$ according to Lemma \ref{closedirectioncloseposition} and Assumptions \ref{ass},
\item rational $0< \Delta < \min \{\eta v^2,\, \Delta_{B}(\eta/2),\, \Upsilon \}$, where
\[\Upsilon := \frac{\varepsilon v^{2s^{2}+1}}{8C_{B}^{2}C_{a}^{2s^{2}}N_{B}^{2s^2}\mathrm{Lip}_{\mathbb{G}}(f)^{2s^{2}-1}},\]
\item $\sigma=9\varepsilon^{2}v^{5}\Delta^2/256$,
\item $0<\rho<1$ such that
\begin{equation}\label{directionaldifferentiability}
|f(x_{\ast}\exp(tE_{\ast})) - f(x_{\ast})-tE_{\ast}f(x_{\ast})|\leq \sigma \mathrm{Lip}_{\mathbb{G}}(f)|t| \quad \mbox{for }|t|\leq \rho,
\end{equation}
\item $0<\delta < \rho \sqrt{3\varepsilon v\Delta^{3}}/4$ such that
\[Ef(x)<E_{\ast}f(x_{\ast})+\varepsilon v\Delta/2\]
whenever $(x,E)\in M$ and $d(x,x_{\ast})\leq 4\delta(1+1/\Delta)$.
\end{enumerate}
To prove Pansu differentiability of $f$ at $x_{\ast}$, we will show that
\[|f(x_{\ast}\delta_{t}(h))-f(x_{\ast})-tE_{\ast}f(x_{\ast})\langle u, E_{\ast}(0) \rangle |\leq \varepsilon t \qquad \mbox{for }d(u)\leq 1,\, 0<t<\delta.\]
Suppose this is not true. Then there exist $u\in \mathbb{Q}^{n}$ with $d(u) \leq 1$ and rational $0<r<\delta$ such that
\begin{equation}\label{badpoint}
|f(x_{\ast}\delta_{r}(u))-f(x_{\ast})-rE_{\ast}f(x_{\ast})\langle u, E_{\ast}(0) \rangle|> \varepsilon r.
\end{equation}
Let ${\rm{s}}=r/ \Delta \in \mathbb{Q}$. To contradict \eqref{badpoint}, we first construct Lipschitz horizontal curves $g$ and $h$ for which we can apply Lemma \ref{preissmeanvalue} with $\varphi:=f\circ g$ and $\psi:=f\circ h$.

\smallskip

\emph{Construction of $g$.} 

\smallskip

To ensure that the image of $g$ is a subset of the set $N$, we first introduce rational approximations to $x_{\ast}$ and $E_{\ast}$. Let
\begin{equation}\label{A1}
A_{1}:=\left( \frac{\eta \Delta}{C_{a}(N_{B}+2)}\right)^{s^2}
\end{equation}
and
\begin{equation}\label{A2}
A_{2}:=\Big(6- \Big( 4(1+20v) \Big( \frac{2+v}{1-v} \Big)^{1/2}+v\Big)\Big)^{s^{2}} \frac{(  \varepsilon v\Delta \mathrm{Lip}_{\mathbb{G}}(f)/2)^{1/2}}{C_{a}^{s^{2}}(N_{B}+2)^{s^2}\mathrm{Lip}_{\mathbb{G}}(f)^{s^{2}}}.
\end{equation}
Notice that $A_{1}, A_{2}>0$. We choose $\tilde{x}_{\ast}\in \mathbb{Q}^{n}$ and $\tilde{E}_{\ast}\in B_{\bbQ}$ sufficiently close to $x_{\ast}$ and $E_{\ast}$ respectively to ensure:
\begin{equation}\label{nowlistingthese} d(\tilde{x}_{\ast}\delta_{r}(u),x_{\ast})\leq 2r,\end{equation}
\begin{equation}\label{lista} d(\tilde{x}_{\ast}\delta_{r}(u), x_{\ast}\delta_{r}(u))\leq \sigma r,\end{equation}
\begin{equation}\label{listvector1} \omega(\tilde{E}_{\ast}-E_{\ast})\leq \min \{ \sigma, \, C_{B}\Delta,\, A_{1},\, A_{2} \}.\end{equation}
Recall that $0<r<\Delta$ and ${{\rm{s}}}=r/\Delta$ are rational and that $0<{\rm{s}}<1$. To construct $g$ we first apply Definition \ref{deform} with $E=\tilde{E}_{\ast}$ and parameters $\eta, {\rm{s}}, \Delta$, $\delta_{r}(u)$ and $u$ as defined above in \eqref{badpoint}. We then left translate this curve by $\tilde{x}_{\ast}$. This gives a Lipschitz horizontal curve $g\colon \bbR \to \bbG$ which is formed by joining at most $N_{B}$ horizontal lines such that
\begin{itemize}
\item $g(t)=\tilde{x}_{\ast}\exp(t\tilde{E}_{\ast})$ for $|t|\geq {\rm{s}}$,
\item $g(\zeta)=\tilde{x}_{\ast}\delta_{r}(u)$, where $\zeta := r\langle u,\tilde{E}_{\ast}(0)\rangle$,
\item $\mathrm{Lip}_{\mathbb{G}}(g)\leq 1+\eta\Delta$,
\item for all but finitely many $t\in \mathbb{R}$, $g'(t)$ exists and $|(p\circ g)'(t) - p(\tilde{E}_{\ast})| \leq C_{B}\Delta$ for $t\in \mathbb{R}$.
\end{itemize}
Since all the relevant quantities are chosen to be rational and $N$ is built according to Lemma \ref{uds}, it follows that the image of $g$ is contained in $N$.

\smallskip

\emph{Construction of $h$.} 

\begin{claim}
There exists a Lipschitz horizontal curve $h\colon \mathbb{R}\to \bbG$ such that
\[h(t)= \begin{cases} \tilde{x}_{\ast}\exp(t\tilde{E}_{\ast}) &\mbox{if }|t|\geq {\rm{s}},\\
x_{\ast}\exp(tE_{\ast}) &\mbox{if }|t|\leq {\rm{s}}/2, and
\end{cases}\]
in each of the regions $({\rm{s}}/2, {\rm{s}})$ and $(-{\rm{s}},-{\rm{s}}/2)$, $h$ is formed by joining at most $N_{B}$ horizontal lines. Moreover
\begin{itemize}
\item $\mathrm{Lip}_{\mathbb{G}}(h)\leq 1+\eta\Delta/2,$
\item for all but finitely many $t\in \mathbb{R}$, $h'(t)$ and satisfies the bound $|(p\circ h)'(t)-p(E_{\ast})|\leq \min \{A_{1}, A_{2}\}$.
\end{itemize}
\end{claim}

\begin{proof}[Proof of Claim]
Up to a left translation, we may start by assuming that $x_{\ast}=0$. Clearly $h(t)$ is defined explicitly and satisfies the required conditions for $|t|\leq {\rm{s}}/2$ and $|t|\geq {\rm{s}}$. We now show how to extend $h$ in $({\rm{s}}/2,{\rm{s}})$. The extension to $(-{\rm{s}},-{\rm{s}}/2)$ is essentially the same.

Recall $\Delta_{B}(1)$ and $C_{B}$ from Assumptions \ref{ass}. Choose $0<\Gamma<\Delta(1)$ satisfying
\[(1+\Gamma)^{2}\leq 1+\eta\Delta/2\]
and
\[C_{B}\Gamma(1+\Gamma)+\Gamma \leq \min \{A_{1}, A_{2}\}.\]
Define $\lambda={\rm{s}}\Gamma/2<\Gamma$. Choose $v\in \bbG$ with $d(v)\leq 1$ such that
\[\delta_{\lambda}(v)=\exp(-{\rm{s}}E_{\ast})\tilde{x}_{\ast}\exp({\rm{s}}\tilde{E}_{\ast}).\]
This is possible if the rational approximation introduced earlier is chosen correctly; note that the rational approximation was introduced after all quantities upon which $\lambda$ depends. We now apply Remark \ref{deform2} with
\begin{itemize}
\item $\eta=1$ and $\Delta$ replaced by $\Gamma$,
\item ${\rm{s}}$ replaced by ${\rm{s}}/2$,
\item $u$ replaced by $v$,
\item $\zeta$ replaced by $\tilde{\zeta}: = \langle \delta_{{\rm{s}}\Gamma /2}(v),E_{\ast}(0)\rangle$.
\end{itemize}
We obtain a Lipschitz horizontal curve $\varphi: [0,{\rm{s}}/2+\tilde{\zeta}]\to \bbG$ that is formed by joining at most $N_{B}$ horizontal lines such that
\begin{itemize}
\item $\varphi(0)=0$,
\item $\varphi({\rm{s}}/2+\tilde{\zeta})=\exp(-({\rm{s}}/2)E_{\ast})\tilde{x}_{\ast}\exp({\rm{s}}\tilde{E}_{\ast})$,
\item $\mathrm{Lip}_{\bbG}(\varphi)\leq 1+\Gamma$,
\item $\varphi'(t)$ exists and $|(p\circ \varphi)'(t)-p(E_{\ast})|\leq C_{B}\Gamma$ for all except finitely many $t\in [0,{\rm{s}}/2+\tilde{\zeta}]$.
\end{itemize}
Then $\tilde{\varphi}:[0,1]\to \bbG$ defined by $\tilde{\varphi}(t)=\varphi(({\rm{s}}/2+\tilde{\zeta})t)$ is a Lipschitz horizontal curve such that
\begin{itemize}
\item $\tilde{\varphi}(0)=0$,
\item $\tilde{\varphi}(1)=\exp(-({\rm{s}}/2)E_{\ast})\tilde{x}_{\ast}\exp({\rm{s}}\tilde{E}_{\ast})$,
\item $\mathrm{Lip}_{\bbG}(\tilde{\varphi})\leq (1+\Gamma)({\rm{s}}/2+\tilde{\zeta})$,
\item $\tilde{\varphi}'(t)$ exists and $|(p\circ \tilde{\varphi})'(t)-({\rm{s}}/2+\tilde{\zeta})p(E_{\ast})|\leq C_{B}\Gamma({\rm{s}}/2+\tilde{\zeta})$ for all but finitely many $t\in [0,1]$.
\end{itemize}
Define $h_1 :[{\rm{s}}/2,{\rm{s}}]\to \bbG$ by
\[h_1(t)=\exp(({\rm{s}}/2)E_{\ast}) \tilde{\varphi}((2/{\rm{s}})(t-{\rm{s}}/2)).\]
Then $h_1$ is a Lipschitz horizontal curve which satisfies $h_{1}({\rm{s}}/2)=\exp(({\rm{s}}/2)E_{\ast})$ and $h_{1}({\rm{s}})=\tilde{x}_{\ast}\exp({\rm{s}}\tilde{E}_{\ast})$. Note that $|p(v)|\leq d(v)\leq 1$ implies $|\tilde{\zeta}|\leq \lambda$. Hence we have
\begin{align*}
\mathrm{Lip}_{\bbG}(h_1)&\leq \frac{2(1+\Gamma)({\rm{s}}/2+\tilde{\zeta})}{{\rm{s}}}\\
& \leq \frac{2(1+\Gamma)({\rm{s}}/2+\lambda)}{{\rm{s}}}\\
&\leq (1+\Gamma)^{2}\\
&\leq 1+\eta\Delta/2.
\end{align*}
Then, for all but finitely many $t\in [{\rm{s}}/2,{\rm{s}}]$
\[|(p\circ h_1)'(t)-(1+2\tilde{\zeta}/{\rm{s}})p(E_{\ast})| \leq C_{B}\Gamma (1+2\tilde{\zeta}/{\rm{s}}),\]
and this implies
\begin{align*}
|(p\circ h_1)'(t)-p(E_{\ast})| &\leq C_{B}\Gamma(1+2\tilde{\zeta}/{\rm{s}}) + 2|\tilde{\zeta}|/{\rm{s}}\\
&\leq C_{B}\Gamma(1+\Gamma)+\Gamma\\
&\leq \min \{A_{1}, A_{2}\}.
\end{align*}

Defining $h(t):=h_{1}(t)$ for any $t\in [{\rm{s}}/2,{\rm{s}}]$ we obtain the desired properties. The extension of $h$ in $[-{\rm{s}},-{\rm{s}}/2]$ is similar.
\end{proof}

\smallskip

\emph{Application of Lemma \ref{preissmeanvalue}.} 

\smallskip

We now prove that the hypotheses of Lemma \ref{preissmeanvalue} hold with $L:=(2+\eta \Delta)\mathrm{Lip}_{\bbG}(f)$, $\varphi:=f\circ g$ and $\psi:=f\circ h$. The inequalities $|\zeta|<{\rm{s}}<\rho$, $0<v<1/32$ and the equality $\varphi(t)=\psi(t)$ for $|t|\geq {\rm{s}}$ are clear. Since $\mathrm{Lip}_{\bbG}(g), \mathrm{Lip}_{\bbG}(h) \leq 1+\eta\Delta/2$, we have $\mathrm{Lip}_{\mathbb{E}}(\varphi)+\mathrm{Lip}_{\mathbb{E}}(\psi)\leq L$. 

Notice that \eqref{lista} implies
\[ |f(\tilde{x}_{\ast}\delta_{r}(u)) - f(x_{\ast}\delta_{r}(u))| \leq \sigma r \mathrm{Lip}_{\bbG}(f).\]
Since $|\zeta|\leq r\leq \rho$, we may evaluate \eqref{directionaldifferentiability} at $t=\zeta$ to obtain
\begin{align*}
|f(x_{\ast}\exp(\zeta E_{\ast}))-f(x_{\ast})-\zeta E_{\ast}f(x_{\ast})| &\leq \sigma \mathrm{Lip}_{\bbG}(f)|\zeta| \\
&\leq \sigma r\mathrm{Lip}_{\bbG}(f).
\end{align*}
Next, note that \eqref{listvector1} implies $|\tilde{E}_{\ast}(0)-E_{\ast}(0)|\leq \sigma$. Recalling that $\zeta=r\langle u, \tilde{E}_{\ast}(0)\rangle$ we can estimate as follows:
\begin{align*}
|\zeta E_{\ast}f(x_{\ast}) - r\langle u,E_{\ast}(0)\rangle E_{\ast}f(x_{\ast})| & = r|E_{\ast}f(x_{\ast})||\langle u, \tilde{E}_{\ast}(0) -E_{\ast}(0)\rangle|\\
&\leq r\mathrm{Lip}_{\bbG}(f)|\tilde{E}_{\ast}(0)-E_{\ast}(0)|\\
&\leq \sigma r \mathrm{Lip}_{\bbG}(f).
\end{align*}
Hence we obtain,
\begin{equation}\label{yetanother}|f(x_{\ast}\exp(\zeta E_{\ast}))-f(x_{\ast})-r\langle u,E_{\ast}(0)\rangle E_{\ast}f(x_{\ast})| \leq 2\sigma r\mathrm{Lip}_{\bbG}(f).\end{equation}

Since $|\zeta|\leq r=\Delta {\rm{s}}\leq {\rm{s}}/2$ we have $h(\zeta)=x_{\ast}\exp(\zeta E_{\ast})$. The definition of $g$ gives $g(\zeta)=\tilde{x}_{\ast}\delta_{r}(u)$. Using also \eqref{badpoint} and \eqref{yetanother}, we can estimate as follows:
\begin{align}\label{differenceofcomposition}
|\varphi(\zeta)-\psi(\zeta)|&= |f(g(\zeta)) - f(h(\zeta))| \nonumber \\
&= |f(\tilde{x}_{\ast}\delta_{r}(u)) - f(x_{\ast}\exp(\zeta E_{\ast}))| \nonumber\\
& \geq |f(x_{\ast}\delta_{r}(u)) - f(x_{\ast}\exp(\zeta E_{\ast}))| - |f(\tilde{x}_{\ast}\delta_{r}(u)) - f(x_{\ast}\delta_{r}(u))|\nonumber \\
& \geq |f(x_{\ast}\delta_{r}(u))-f(x_{\ast})-rE_{\ast}f(x_{\ast})\langle u, E_{\ast}(0) \rangle | \nonumber \\
& \quad -|f(x_{\ast}\exp(\zeta E_{\ast})) - f(x_{\ast}) - r E_{\ast}f(x_{\ast})\langle u, E_{\ast}(0) \rangle| \nonumber \\
& \quad - \sigma r\mathrm{Lip}_{\bbG}(f) \nonumber\\
&\geq \varepsilon r - 2\sigma r\mathrm{Lip}_{\bbG}(f) -  \sigma r\mathrm{Lip}_{\bbG}(f)\nonumber \\
&= \varepsilon r - 3\sigma r\mathrm{Lip}_{\bbG}(f) \nonumber \\
&\geq 3\varepsilon r/4.
\end{align}
In particular, $\varphi(\zeta)\neq \psi(\zeta)$.

The derivative $\psi'(0)$ exists and equals $E_{\ast}f(x_{\ast})$, since $\psi(t)=f(x_{\ast}\exp(tE_{\ast}))$ for every $|t|\leq {\rm{s}}/2$. We next check that
\begin{equation}\label{psiprime}
|\psi(t)-\psi(0)-t\psi'(0)| \leq \sigma L|t| \quad \mbox{for }|t|\leq \rho.
\end{equation}
Recall that $h(0)=x_{\ast}$, $|(p\circ h)'-p(E_{\ast})|\leq A_{1}$ (see \eqref{A1} for the definition of $A_{1}$) and $h$ is formed by joining at most $N_{B}+2$ horizontal lines. Hence Lemma \ref{closedirectioncloseposition} implies that
\[ d(x_{\ast}\exp(tE_{\ast}),h(t))\leq C_{a}(N_{B}+2)A_{1}^{1/s^2}|t|\leq \eta\Delta |t| \quad \mbox{ for every }t\in \bbR.\]
Hence, using also \eqref{directionaldifferentiability} and $L=(2+\eta\Delta)\mathrm{Lip}_{\bbG}(f)$, one has
\begin{align*}
|\psi(t)-\psi(0)-t\psi'(0)| &\leq |f(x_{\ast}\exp(tE_{\ast})) - f(x_{\ast})-tE_{\ast}f(x_{\ast})|\\
& \qquad + |f(x_{\ast}\exp(tE_{\ast})) - f(h(t))|\\
&\leq \sigma \mathrm{Lip}_{\bbG}(f)|t| + \mathrm{Lip}_{\bbG}(f)\eta\Delta|t|\\
&\leq \sigma L |t| \quad \mbox{ for }|t|\leq \rho.
\end{align*}

Since $\mathrm{Lip}_{\bbG}(f)\leq 1/2$ we have $L\leq 4$. By using $r< \delta$, ${\rm{s}}=r/\Delta$, \eqref{differenceofcomposition} and the definition of $r, \delta, \Delta$ and ${\rm{s}}$ we deduce
\begin{align*}
{\rm{s}}\sqrt{ {\rm{s}}L/(v|\varphi(\zeta)-\psi(\zeta)|)}  &\leq 4{\rm{s}}\sqrt{{\rm{s}}/(3\varepsilon rv)}\\
&= 4r/\sqrt{3\varepsilon v\Delta^3}\\
&\leq 4\delta/ \sqrt{3\varepsilon v\Delta^{3}}\\
&\leq \rho.
\end{align*}

Finally we use \eqref{differenceofcomposition}, $L\leq 4$ and the definition of $\sigma$ to get
\begin{align*}
v^3 (|\varphi(\zeta)-\psi(\zeta)|/({\rm{s}}L))^2&\geq v^3(3\varepsilon r / 16{\rm{s}})^2\\
&= 9\varepsilon^2 v^3 \Delta^2 /256\\
& \geq \sigma.
\end{align*}

We can now apply Lemma \ref{preissmeanvalue}. We obtain $\tau \in (-{\rm{s}},{\rm{s}})\setminus \{\zeta \}$ such that $\varphi'(\tau)$ exists and satisfies
\begin{equation}\label{bigderivative}
\varphi'(\tau)\geq \psi'(0)+v|\varphi(\zeta)-\psi(\zeta)|/{\rm{s}},
\end{equation}
and for every $t\in \mathbb{R}$:
\begin{equation}\label{incrementsbound}
|(\varphi(\tau+t)-\varphi(\tau))-(\psi(t)-\psi(0))|\leq 4(1+20v)\sqrt{(\varphi'(\tau)-\psi'(0))L}|t|
\end{equation}
Since $g$ is a horizontal curve, we may use Remark \ref{meanvalueremark} to additionally choose $\tau$ such that $g'(\tau)$ exists and is in $\mathrm{Span}\{X_{i}(g(\tau))\colon 1\leq i\leq r\}$. 

\smallskip

\emph{Conclusion.}

\smallskip

Let $x:=g(\tau)\in N$ and choose $E\in V_{1}$ with $E(g(\tau))=g'(\tau)/|p(g'(\tau))|$, which implies that $\omega(E)=1$. From \eqref{bigderivative} and \eqref{incrementsbound} we will obtain
\begin{equation}\label{betterpair1}
Ef(x)\geq E_{\ast}f(x_{\ast}) + \varepsilon v\Delta/2,
\end{equation}
\begin{equation}\label{betterpair2}
(x,E)\in M.
\end{equation}
We first observe that this suffices to conclude the proof. Indeed, by \eqref{nowlistingthese} and since $g(\zeta)=\tilde{x}_{\ast}\delta_{r}(u)$ one has
\begin{align*}
d(x,x_{\ast}) &\leq d(g(\tau),g(\zeta))+d(\tilde{x}_{\ast}\delta_{r}(u),x_{\ast})\\
&\leq \mathrm{Lip}_{\bbG}(g)|\tau - \zeta| +2r\\
&\leq 4({\rm{s}}+r)\\
&= 4r(1+1/\Delta)\\
&\leq 4\delta(1+1/\Delta).
\end{align*}
Since $x\in N$, combining this with \eqref{betterpair1} and \eqref{betterpair2} contradicts the choice of $\delta$. This forces us to conclude that \eqref{badpoint} is false, finishing the proof.

\smallskip

\emph{Proof of \eqref{betterpair1}.}

\smallskip

Using \eqref{differenceofcomposition} and \eqref{bigderivative} we have that
\begin{equation}\label{stanco}
\varphi'(\tau)-\psi'(0)\geq 3\varepsilon vr/4{\rm{s}}=3\varepsilon v\Delta/4.
\end{equation}
Notice that, by the definition of $E$, by Definition \ref{defdirectionalderivative}, and the fact that g is a concatenation of horizontal lines, we have $\varphi'(\tau)=Ef(x)|p(g'(\tau))|$. Since $\omega(E)=1$, we deduce that $|\varphi'(\tau)|/|p(g'(\tau))|\leq \mathrm{Lip}_{\bbG}(f)$. Similarly $|p(g'(\tau))| \leq \mathrm{Lip}_{\bbG}(g)\leq 1+\eta \Delta$. Since $\psi'(0)=E_{\ast}f(x_{\ast})$, by \eqref{stanco} we have
\begin{align*}
& Ef(x)-E_{\ast}f(x_{\ast})-(1-v)(\varphi'(\tau)-\psi'(0))\\
&\qquad = v(\varphi'(\tau)-\psi'(0)) + (1-|p(g'(\tau))|)\varphi'(\tau)/|p(g'(\tau))|\\
&\qquad \geq 3\varepsilon v^2\Delta/4 - \eta\Delta \mathrm{Lip}_{\bbG}(f)\\
&\qquad \geq 0.
\end{align*}
In the last inequality we used $\mathrm{Lip}_{\bbG}(f)\leq 1/2$ and $\eta\leq 3\varepsilon v^2 /2$. From this we use $0<v<1/32$ and \eqref{stanco} again to deduce
\begin{equation}\label{noimagination}
Ef(x)-E_{\ast}f(x_{\ast})\geq (1-v)(\varphi'(\tau)-\psi'(0))\geq \varepsilon v\Delta /2,
\end{equation}
which proves \eqref{betterpair1}.

\smallskip

\emph{Proof of \eqref{betterpair2}.}

\smallskip

Recall that $|(p\circ g)'(t)-p(\tilde{E}_{\ast})| \leq C_{B}\Delta$ for all but finitely many $t$. Using \eqref{listvector1}, this implies $|(p\circ g)'(t)- p(E_{\ast})|\leq 2C_{B}\Delta$ for all but finitely many $t$. Since $x=g(\tau)$ and $g$ is formed by joining at most $N_{B}$ horizontal lines, we can apply Lemma \ref{closedirectioncloseposition} to obtain
\[d(g(\tau+t),x\exp(tE_{\ast}))\leq C_{a}N_{B}(2C_{B}\Delta)^{1/s^{2}}|t| \qquad \mbox{for every }t\in \bbR.\]
By \eqref{noimagination} we have $\Delta \leq 2(Ef(x)-E_{\ast}f(x_{\ast}))/(\varepsilon v)$. Combining this fact with the definition of $\Delta$, we deduce that
\begin{align}\label{add1}
&|(f(x\exp(tE_{\ast}))-f(x))-(f(g(\tau+t))-f(g(\tau)))|\nonumber \\
&\qquad = |f(x\exp(tE_{\ast}))-f(g(\tau+t))|\nonumber \\
&\qquad \leq \mathrm{Lip}_{\bbG}(f)  d(g(\tau+t), x\exp(tE_{\ast}))\nonumber \\
&\qquad \leq C_{a}N_{B}(2C_{B}\Delta)^{1/s^{2}} \mathrm{Lip}_{\bbG}(f)|t| \nonumber \\
&\qquad \leq C_{a}N_{B}(2C_{B}\sqrt{\Delta})^{1/s^{2}}\mathrm{Lip}_{\bbG}(f)|t| \Big( \frac{2(Ef(x)-E_{\ast}f(x_{\ast}))}{\varepsilon v} \Big)^{\frac{1}{2s^{2}}}\nonumber \\
&\qquad \leq v|t|\big((Ef(x)-E_{\ast}f(x_{\ast}))\mathrm{Lip}_{\bbG}(f) \big)^{\frac{1}{2s^{2}}} \Big(\frac{8C_{B}^{2}C_{a}^{2s^{2}}N_{B}^{2s^{2}}\Delta\mathrm{Lip}_{\bbG}(f)^{2s^{2}-1}}{\varepsilon v^{2s^{2}+1}} \Big)^{\frac{1}{2s^{2}}} \nonumber \\
&\qquad \leq v|t|\big((Ef(x)-E_{\ast}f(x_{\ast}))\mathrm{Lip}_{\bbG}(f) \big)^{\frac{1}{2s^{2}}} \quad \mbox{ for }t\in \bbR.
\end{align}
Combining \eqref{incrementsbound}, \eqref{noimagination} and $L=(2+\eta \Delta)\mathrm{Lip}_{\bbG}(f)\leq (2+v)\mathrm{Lip}_{\bbG}(f)$ gives
\begin{align}\label{add2}
&|(\varphi(\tau+t)-\varphi(\tau))-(\psi(t)-\psi(0))| \nonumber \\
&\qquad \leq 4(1+20v)|t| \Big( \frac{(2+v)\mathrm{Lip}_{\bbG}(f)(Ef(x)-E_{\ast}f(x_{\ast}))}{1-v} \Big)^{\frac{1}{2}} \quad \mbox{ for }t\in \bbR.
\end{align}
Since $\mathrm{Lip}_{\bbG}(f)\leq 1/2$, we easily get
\[((Ef(x)-E_{\ast}f(x_{\ast}))\mathrm{Lip}_{\bbG}(f))^{\frac{1}{2}} \leq ((Ef(x)-E_{\ast}f(x_{\ast}))\mathrm{Lip}_{\bbG}(f))^{\frac{1}{2s^{2}}}\]
since both sides are less than $1$. Hence adding \eqref{add1} and \eqref{add2} and using the definition $\varphi=f\circ g$ gives for $t\in \bbR$:
\begin{align}\label{add3}
& |f(x\exp(tE_{\ast})-f(x))-(\psi(t)-\psi(0))|\nonumber \\
&\qquad \leq \Big( 4(1+20v) \Big( \frac{2+v}{1-v} \Big)^{\frac{1}{2}}+v\Big) |t|( (Ef(x)-E_{\ast}f(x_{\ast}))\mathrm{Lip}_{\bbG}(f))^{\frac{1}{2s^{2}}}.
\end{align}

Recall that $\psi = f\circ h$ and that $h$ is a concatenation of at most $N_B+2$ horizontal lines such that $h(0)=x_{\ast}$ and the inequality $|(p\circ h)'-p(E_{\ast})|\leq A_{2}$ holds for all but finitely many $t\in \bbR$. Then, by Lemma \ref{closedirectioncloseposition}, \eqref{A2} and \eqref{betterpair1}, we have
\begin{align}\label{add4}
&|(\psi(t)-\psi(0))-(f(x_{\ast}\exp(tE_{\ast}))-f(x_{\ast}))| \nonumber \\
&\quad =|f(h(t))-f(x_{\ast}\exp(tE_{\ast}))| \nonumber \\
&\quad \leq \mathrm{Lip}_{\bbG}(f)d(h(t), x_{\ast}\exp(tE_{\ast})) \nonumber \\
&\quad \leq \mathrm{Lip}_{\bbG}(f)C_{a}(N_{B}+2)A_{2}^{1/s^{2}}|t| \nonumber \\
&\quad =  \Big(6- \Big( 4(1+20v) \Big( \frac{2+v}{1-v} \Big)^{\frac{1}{2}}+v\Big)\Big)|t|( \varepsilon v \Delta/2)\mathrm{Lip}_{\bbG}(f))^{\frac{1}{2s^{2}}} \nonumber \\
&\quad \leq  \Big(6- \Big( 4(1+20v) \Big( \frac{2+v}{1-v} \Big)^{\frac{1}{2}}+v\Big)\Big)|t|((Ef(x)-E_{\ast}f(x_{\ast}))\mathrm{Lip}_{\bbG}(f))^{\frac{1}{2s^{2}}} \quad \mbox{ for }t\in \mathbb{R}.
\end{align}
Adding \eqref{add3} and \eqref{add4} gives for every $t\in \bbR$
\begin{align*}
& |(f(x\exp(tE_{\ast}))-f(x)) - (f(x_{\ast}\exp(tE_{\ast}))-f(x_{\ast}))| \\
& \qquad \leq 6|t| \big(  (Ef(x)-E_{\ast}f(x_{\ast}))\mathrm{Lip}_{\bbG}(f) \big)^{\frac{1}{2s^{2}}}.
\end{align*}
This implies \eqref{betterpair2}, hence proving the theorem.
\end{proof}

\section{Construction of an almost maximal directional derivative}\label{sectionconstruction}
Assume $\mathbb{G}$ is a Carnot group of step $s$, rank $r$ and topological dimension $n$. Fix a $G_{\delta}$ set $N\subset \mathbb{G}$. The main result of this section is Proposition \ref{DoreMaleva}, which is an adaptation of \cite[Theorem 3.1]{DM11} and of \cite[Theorem 6.1]{PS16} to $\mathbb{G}$. It shows that given a Lipschitz function $f_{0}\colon \mathbb{G} \to \mathbb{R}$, there is a Lipschitz function $f\colon \mathbb{G} \to \mathbb{R}$ such that $f-f_{0}$ is $\mathbb{G}$-linear and $f$ has an almost locally maximal horizontal directional derivative at a point of $N$.

%We will conclude that any Lipschitz function $f_{0}$ is Pansu differentiable at a point of $N$, proving Theorem \ref{maintheorem}.

\begin{notation}\label{D}
For any Lipschitz function $f:\bbG \to \bbR$, define
\[D^{f}:=\{ (x,E) \in N\times V_{1} \colon \omega(E)=1,\, Ef(x) \mbox{ exists}\}.\]
\end{notation}

Note that if $f-f_{0}$ is $\mathbb{G}$-linear then $D^{f}=D^{f_{0}}$ and also the functions $f$ and $f_{0}$ have the same points of Pansu differentiability.

\begin{proposition}\label{DoreMaleva}
Suppose $f_0:\mathbb{G}\to \mathbb{R}$ is a Lipschitz function, $(x_0,E_0)\in D^{f_0}$ and $\delta_0, \mu, \tau, K>0$. Then there is a Lipschitz function $f:\mathbb{G}\to \mathbb{R}$ such that $f-f_0$ is $\mathbb{G}$-linear with $\mathrm{Lip}_{\mathbb{G}}(f-f_{0})\leq \mu$, and a pair $(x_{\ast},E_{\ast})\in D^{f}$ with $d(x_{\ast},x_0)\leq \delta_0$ and $\omega(E_{\ast}-E_0)\leq \tau$ such that $E_{\ast}f(x_{\ast})>0$ is almost locally maximal in the following sense.

For any $\varepsilon>0$ there is $\delta_{\varepsilon}>0$ such that, whenever $(x,E)\in D^{f}$ satisfies both:
\begin{enumerate}
\item $d(x,x_{\ast})\leq \delta_{\varepsilon}$, $Ef(x)\geq E_{\ast}f(x_{\ast})$ and
\item for any $t\in (-1,1)$
\begin{align*}
&|(f(x\exp(tE_{\ast}))-f(x))-(f(x_{\ast}\exp(tE_{\ast}))-f(x_{\ast}))|\\
& \qquad \leq K|t| ( Ef(x)-E_{\ast}f(x_{\ast}))^{\frac{1}{2s^2}},
\end{align*}
\end{enumerate}
then
\[Ef(x)<E_{\ast}f(x_{\ast})+\varepsilon.\]
\end{proposition}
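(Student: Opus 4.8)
The plan is to run the variational perturbation scheme of Preiss \cite{Pre90} and Dor\'{e}--Maleva \cite[Theorem 3.1]{DM11}, in the form in which it was adapted to Carnot groups (for the Heisenberg group) in \cite[Theorem 6.1]{PS16}; the only genuinely new bookkeeping concerns the exponent $\tfrac{1}{2s^{2}}$ and the constant $K$ appearing in the difference--quotient condition. As a preliminary step I would add to $f_{0}$ a $\mathbb{G}$--linear map $c\langle\,\cdot\,,E_{0}(0)\rangle$ with $0<c\le\mu/2$; by Lemma \ref{lemmascalarlip} this has Lipschitz constant $c$ and changes each directional derivative $Ef(x)$ by $c\langle p(E),p(E_{0})\rangle$, so after renaming we may assume $E_{0}f_{0}(x_{0})>0$, keeping the remaining budget $\mu/2$ in reserve.

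The heart of the argument is an inductive construction of Lipschitz functions $f_{k}$ (with $f_{1}=f_{0}$ and each $f_{k+1}-f_{k}$ $\mathbb{G}$--linear), pairs $(x_{k},E_{k})\in D^{f_{k}}=D^{f_{0}}$ with $(x_{1},E_{1})=(x_{0},E_{0})$, radii $\delta_{k}\downarrow 0$, weights $\mu_{k}>0$ with $\sum_{k}\mu_{k}\le\mu/2$, and constants $K_{k}>K$. Given the data at stage $k$, put
\[
\lambda_{k}:=\sup\{\,Ef_{k}(x)\;:\;(x,E)\in D^{f_{k}},\ d(x,x_{k})\le\delta_{k},\ \omega(E-E_{0})\le\tau,\ Ef_{k}(x)\ge E_{k}f_{k}(x_{k}),\ (\ast_{k})\,\},
\]
where $(\ast_{k})$ is the inequality displayed in the statement with $(x_{\ast},E_{\ast},f,K)$ replaced by $(x_{k},E_{k},f_{k},K_{k})$ and $Ef(x)-E_{\ast}f(x_{\ast})$ replaced by $Ef_{k}(x)-E_{k}f_{k}(x_{k})$. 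Since $(x_{k},E_{k})$ itself lies in the competing set, $\lambda_{k}\ge c_{k}:=E_{k}f_{k}(x_{k})$. If $\lambda_{k}\le c_{k}$ we keep the pair and the function, only shrinking $\delta_{k}$; otherwise we pick $(x_{k+1},E_{k+1})$ nearly attaining $\lambda_{k}$, set $f_{k+1}:=f_{k}+\mu_{k}\langle\,\cdot\,,E_{k+1}(0)\rangle$, and then choose $\delta_{k+1}<\delta_{k}/2$ together with small enough $K_{k+1}$ and $\mu_{k+1}$, so that the whole tail of the construction stays inside both the $k$--th open set of a fixed representation $N=\bigcap_{m}U_{m}$ and the ball $\{d(\cdot,x_{0})\le\delta_{0}\}$, the bounds $\omega(E_{k+1}-E_{0})\le\tau$ and $\sum_{j}\mu_{j}\le\mu/2$ persist, and the increment estimates needed below remain valid. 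The structural point is that $c_{k}$ is nondecreasing and bounded above by $\mathrm{Lip}_{\mathbb{G}}(f_{0})+\mu$, hence $c_{k}\uparrow\lambda_{\ast}>0$ and the gaps $\lambda_{k}-c_{k}$ tend to $0$ -- this is precisely what yields almost--local--maximality of the limit pair.

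Passing to the limit is routine once the estimates of Sections \ref{sectiondistanceestimate}--\ref{sectionUDS} are in hand: since $\sum_{k}\mu_{k}<\infty$ the $f_{k}$ converge uniformly to a Lipschitz $f$ with $f-f_{0}$ equal to a $\mathbb{G}$--linear map of Lipschitz constant $\le\mu$ (Lemma \ref{lemmascalarlip}); since $\sum_{k}\delta_{k}<\infty$, $x_{k}\to x_{\ast}\in N$ with $d(x_{\ast},x_{0})\le\delta_{0}$, and $E_{k}\to E_{\ast}$ with $\omega(E_{\ast}-E_{0})\le\tau$. Because $f_{k}$ and $f$ differ by a $\mathbb{G}$--linear map of Lipschitz constant $O(\sum_{j\ge k}\mu_{j})$, one gets $E_{k}f(x_{k})\to\lambda_{\ast}>0$; the delicate step, carried out exactly as in \cite{DM11,PS16}, is to show that $E_{\ast}f(x_{\ast})$ \emph{exists} and equals $\lambda_{\ast}$, which one reads off the nested inequalities $(\ast_{k})$: they force the difference quotients of $f$ along $t\mapsto x_{\ast}\exp(tE_{\ast})$ to be Cauchy as $t\to0$. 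Finally, given $\varepsilon>0$, one chooses $k$ with $\lambda_{\ast}-c_{k}<\varepsilon/2$ and with $(x_{k},E_{k})$ close enough to $(x_{\ast},E_{\ast})$ and $\sum_{j\ge k}\mu_{j}$ small enough to absorb the errors incurred when passing between $(x_{\ast},E_{\ast},f,K)$ and $(x_{k},E_{k},f_{k},K_{k})$ -- here Lemma \ref{closedirectioncloseposition} and Proposition \ref{euclideanheisenberg} control the position errors -- and sets $\delta_{\varepsilon}:=\delta_{k}/2$. Then any $(x,E)\in D^{f}$ obeying conditions (1) and (2) relative to $(x_{\ast},E_{\ast})$ also obeys $(\ast_{k})$ relative to $(x_{k},E_{k})$, whence $Ef(x)\le Ef_{k}(x)+O(\sum_{j\ge k}\mu_{j})\le\lambda_{k}+o(1)\le\lambda_{\ast}+o(1)<E_{\ast}f(x_{\ast})+\varepsilon$.

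I expect the main obstacle to be the simultaneous calibration of the three families $\mu_{k},\delta_{k},K_{k}$ (and the convergence rate of $(x_{k},E_{k})$): the perturbation series must stay within the budgets $\mu$ and $\tau$; the limit point must land inside the $G_{\delta}$ set $N$; condition (2) relative to the limit data must be a consequence of $(\ast_{k})$ up to errors vanishing in $k$, which forces $K_{k}$ to exceed $K$ by a definite amount governed by the position estimates; and, most importantly, the nested increment bounds must be sharp enough that the limit directional derivative $E_{\ast}f(x_{\ast})$ genuinely exists -- mere uniform convergence of the $f_{k}$ does not suffice. All remaining ingredients -- monotonicity and boundedness of $c_{k}$, the Lipschitz bounds, and membership of $x_{\ast}$ in $N$ -- are straightforward given what is recorded above and in \cite{Pre90,DM11,PS16}.
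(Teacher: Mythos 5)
Your proposal follows the same overall variational perturbation scheme as the paper (Preiss \cite{Pre90}, Dor\'e--Maleva \cite{DM11}, adapted as in \cite{PS16}): iteratively perturb $f$ by small $\mathbb{G}$-linear maps, take a nearly optimal pair at each stage, and pass to the limit. The paper organizes this through Algorithm \ref{alg}, the comparison relation $\leq_{(h,\sigma)}$ of Notation \ref{comparison}, the nested sets $D_{m}$, and Lemmas \ref{lemmachiave}--\ref{almostlocmax}. Your $(\ast_{k})$ and the sets over which you take the sup $\lambda_{k}$ play a role analogous to $\leq_{(f_{m},\sigma)}$ and $D_{m}$.

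However, there is a genuine gap in how you set up the competing class. You insert the constraint $\omega(E-E_{0})\le\tau$ directly into the set defining $\lambda_{k}$, so that the directional bound for the limit pair $(x_{\ast},E_{\ast})$ comes for free. But the conclusion of Proposition \ref{DoreMaleva} quantifies over \emph{all} pairs $(x,E)\in D^{f}$ satisfying (1) and (2) — there is no hypothesis there about $\omega(E-E_{0})$. At the final step you invoke $Ef_{k}(x)\le\lambda_{k}$, which requires $(x,E)$ to lie in your restricted competing class; nothing in (1) and (2) guarantees $\omega(E-E_{0})\le\tau$ a priori, so this step does not follow. Put differently, your construction would only deliver almost local maximality relative to directions already $\tau$-close to $E_{0}$, which is weaker than the statement. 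The paper circumvents this by \emph{not} imposing any direction constraint in $D_{m}$; instead, the direction bound is a \emph{consequence} of near-optimality — Lemma \ref{lemmachiave}(3) derives $d(E(0),E_{m}(0))\le\sigma_{m}$ from the inner-product argument using Algorithm \ref{alg}(5)--(6), and $\omega(E_{\ast}-E_{0})<\tau$ is then proved in Lemma \ref{lemmaquasifinale}(7). That derivation also depends crucially on the preliminary normalization $\mathrm{Lip}_{\mathbb{G}}(f_{0})\le\min\{1/2,\ t_{0}\tau^{2}/32\}$ (Assumptions \ref{Ass}), which your preliminary step — merely adding a small $\mathbb{G}$-linear map to make $E_{0}f_{0}(x_{0})>0$ — does not include. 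To close the gap you should drop the artificial direction constraint from the competing class, include the Lipschitz rescaling at the outset, and obtain the bound $\omega(E_{\ast}-E_{0})\le\tau$ as an output of the optimization rather than as an input.

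A secondary remark: you acknowledge that the existence of the directional derivative $E_{\ast}f(x_{\ast})$ is ``the delicate step,'' but your sketch of it — that the nested inequalities force Cauchy difference quotients — is only a heuristic. In the paper this is Lemma \ref{lemmaquasifinale}(4), which uses the comparison relations $(x_{m-1},E_{m-1})\le_{(f_{m},\cdot)}(x_{\ast},E_{\ast})$ quantitatively together with the bounds $\mathrm{Lip}_{\mathbb{G}}(f-f_{m})\le 2t_{m}$ and $d(E_{\ast}(0),E_{m-1}(0))\le\sigma_{m-1}$ to pin down the limit. Your $K_{k}$-calibration plays the role of the $\sigma$-slack in $\le_{(h,\sigma)}$, and your ``keep the pair if $\lambda_{k}\le c_{k}$'' branch is superfluous once one always passes to a near-maximizer with tolerance $\lambda_{m}>0$ as in Algorithm \ref{alg}(6).
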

We use the remainder of this section to prove Proposition \ref{DoreMaleva}. 
We recall the following constants:
\begin{itemize}
\item $C_{\mathrm{a}}\geq 1$ chosen as in Lemma \ref{closedirectioncloseposition},
%\item $C_{\mathrm{H}}=C_{\mathrm{H\ddot{o}lder}}\geq 1$ denotes the constant in Proposition \ref{euclideanheisenberg} for the compact set $\overline{B_{\mathbb{G}}(x_{0},2+\delta_{0})} \subset \mathbb{G}$,
%\item $C_{V}\geq 1$ such that $\mathrm{Lip}_{\mathbb{G}}(E)\leq C_{V}$ whenever $E\in V$ and $\omega(E)=1$. This is possible since $V=\mathrm{Span}\{X_{i}: 1\leq i\leq r\}$ and $\{X_{i}: 1\leq i\leq r\}$ are Lipschitz functions $\mathbb{R}^{l} \to \mathbb{R}^{l}$ with respect to the Euclidean distance.
\item $C_D \geq 1$ as in \ref{conjugatedistance}.
\end{itemize}
Fix $f_{0}, x_0, E_0, \delta_0, \tau, \mu, K$ as given in the statement of Proposition \ref{DoreMaleva} and define $t_0:=\min \{1/4,\, \mu/2\}$.

\begin{assumptions}\label{Ass}
Without loss of generality, we make the following assumptions:
\begin{itemize}
\item $K\geq 4s^2$, since increasing $K$ makes the statement of Proposition \ref{DoreMaleva} stronger,
\item $\mathrm{Lip}_{\mathbb{G}}(f_0)\leq \min\{1/2, t_0 \tau^2 / 32\}$, after multiplying $f_0$ by a positive constant and possibly increasing $K$,
\item $E_0f(x_0)\geq 0$, by replacing $E_0$ by $-E_0$ if necessary.
\end{itemize}
\end{assumptions}

We prove Proposition \ref{DoreMaleva} using a technique similar to the one implemented in \cite[Theorem 6.1]{PS16}, namely by using Algorithm \ref{alg} below to construct a sequence of Lipschitz functions $(f_m)$ and a sequence of pairs $(x_m,E_m)$ in $D^{f_m}$ so that $E_{m}f(x_m)$ converges to an almost locally maximal directional derivative for $f$. More precisely, we show that the limits $(x_{\ast},E_{\ast})$ and $f$ have the properties stated in Proposition \ref{DoreMaleva}.

\begin{notation}\label{comparison}
Suppose $h:\mathbb{G}\to\mathbb{R}$ is Lipschitz, the pairs $(x,E)$ and $(x',E')$ belong to $D^h$, and $\sigma \geq 0$. We write
\[(x,E)\leq_{(h,\sigma)} (x',E')\]
if $E h(x)\leq E' h(x')$ and for all $t\in (-1,1)$
\begin{align*}
&|(h(x'\exp(tE))-h(x'))-(h(x\exp(tE))-h(x))|\\
&\qquad \leq K (\sigma+ (E'f(x')-Ef(x))^{\frac{1}{2s^2}})|t|.
\end{align*}
\end{notation}

In the language of Notation \ref{comparison}, Proposition \ref{DoreMaleva}(2) means $(x_{\ast},E_{\ast})\leq_{(f,0)} (x,E)$. 

Since $N$ is $G_{\delta}$ we can fix open sets $U_k\subset \mathbb{G}$ such that $N=\cap_{k=0}^{\infty} U_k$. We may assume that $U_{0}=\mathbb{G}$. We point out that, in Algorithm \ref{alg} below, the order in which the parameters are chosen plays a crucial role in what follows.

\begin{algorithm}\label{alg}
Let $f_0, x_0, E_0, \tau$ and $\delta_0$ be as in the assumptions of Proposition \ref{DoreMaleva}. Let $\sigma_0:=2$ and $t_0:=\min \{1/4,\, \mu/2\}$.

Then we can recursively define
\begin{enumerate}
\item $f_m(x):=f_{m-1}(x)+t_{m-1} \langle x, E_{m-1}(0) \rangle$,
\item $\sigma_m\in (0, \sigma_{m-1}/4)$,
%\item $\eta_m:= ...$
\item $t_m\in (0, \min\{t_{m-1}/2,\, \sigma_{m-1}/(s^2 m)\})$,
\item $\lambda_m\in (0, \min\{t_m\sigma_m^{2s^2}/(2C_{\mathrm{a}}^{2s^2}),\, t_m\tau^2/2^{2m+3}\})$,
\item $D_m$ to be the set of pairs $(x,E)\in D^{f_m}=D^{f_0}$ such that $d(x,x_{m-1})<\delta_{m-1}$ and
\[(x_{m-1}, E_{m-1})\leq_{(f_m,\sigma_{m-1}-\varepsilon)} (x,E)\]
for some $\varepsilon\in (0,\sigma_{m-1})$,
\item $(x_m,E_m)\in D_m$ such that $Ef_m(x)\leq E_mf_m(x_m)+\lambda_m$ for every pair $(x,E)\in D_m$,
\item $\varepsilon_m\in (0,\sigma_{m-1})$ such that $(x_{m-1}, E_{m-1})\leq_{(f_m,\sigma_{m-1}-\varepsilon_m)} (x_m, E_m)$,
\item $\delta_m\in (0, (\delta_{m-1}-d(x_m,x_{m-1}))/2)$ such that $\overline{B_{\mathbb{G}}(x_m,\delta_m)}\subset U_m$ and for all $|t|<C_D\delta_m^{\frac{1}{s}}/\varepsilon_m$
\begin{align*}
&|(f_m(x_m\exp(tE_{m}))-f_m(x_m))-(f_m(x_{m-1}\exp(tE_{m-1}))-f_m(x_{m-1}))| \\
&\qquad \leq( E_mf_m(x_m)-E_{m-1}f_m(x_{m-1})+\sigma_{m-1}) |t|.
\end{align*}
%\item WE HAVE TO CHOOSE $\tau_m$!!
\end{enumerate}
\end{algorithm}

\begin{proof}
Clearly one can make choices satisfying (1)--(5). For (6)--(8) we can proceed exactly as in \cite[Proof of Algorithm 6.4]{PS16} using Lemma \ref{lemmascalarlip} and Lemma \ref{lipismaximal} instead of \cite[Lemma 5.2]{PS16} and \cite[Lemma 3.3]{PS16}, respectively.

%For (6) first notice that $(x_{m-1},E_{m-1})\in D_m$  and hence $D_{m} \neq \emptyset$. By , the functions $f_{m} \colon \mathbb{G} \to \mathbb{R}$ are Lipschitz and 
%\begin{equation}\label{deffm}
%f_m(x)=f_0(x)+\Big\langle x, \sum_{k=0}^{m-1} t_kE_k(0)\Big\rangle.
%\end{equation} 
%Using $\mathrm{Lip}_{\mathbb{G}}(f_0)\leq 1/2$, $t_{k+1}\leq t_k/2$, $t_0\leq 1/4$ and Lemma \ref{lemmascalarlip} gives $\mathrm{Lip}_{\mathbb{G}}(f_m)\leq 1$.  implies $|Ef_{m}(x)|\leq \omega(E)\mathrm{Lip}_{\mathbb{G}}(f_{m})$. Hence $\sup_{(x,E)\in D_m} Ef_m(x)\leq 1$, so we can choose $(x_m,E_m)\in D_m$ as in (6). 

%The definition of $D_m$ in (5) implies that one can choose $\varepsilon_m$ as in (7). 

%Notice that (6) and the definition of $D_m$ in (5) imply that $x_{m}\in N\subset U_{m}$, $d(x_m,x_{m-1})<\delta_{m-1}$ and $E_mf_m(x_m)\geq E_{m-1}f_m(x_{m-1})$. Therefore we can use existence of the directional derivatives of $f_{m}$ to choose $\delta_m$ as in (8).
\end{proof}

We omit the proof of the following Lemma since it is exactly the same as the one of \cite[Lemma 6.5]{PS16} for the Heisenberg group. 

\begin{lemma} \label{inclusionballs}
The sequences $\sigma_m, t_m, \lambda_m, \delta_m, \varepsilon_m$ converge to $0$, and for every $m\geq 1$ the inclusion
\[\overline{B_{\mathbb{G}}(x_m,\delta_m)}\subset B_{\mathbb{G}}(x_{m-1},\delta_{m-1})\]
holds.
\end{lemma}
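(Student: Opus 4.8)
The plan is to read everything off directly from the recursive bounds imposed in Algorithm \ref{alg}, exactly along the lines of \cite[Lemma 6.5]{PS16}. First I would record the geometric decay of three of the five sequences. Step (2) of Algorithm \ref{alg} gives $\sigma_m<\sigma_{m-1}/4$, so $\sigma_m<\sigma_0/4^m=2/4^m\to 0$. Step (3) gives $t_m<t_{m-1}/2$, hence $t_m<t_0/2^m\to 0$. For $\delta_m$, note that step (6) forces $(x_m,E_m)\in D_m$, and the definition of $D_m$ requires $d(x_m,x_{m-1})<\delta_{m-1}$; in particular $\delta_{m-1}-d(x_m,x_{m-1})>0$, so the interval in step (8) is nonempty and $\delta_m$ is well defined with $\delta_m<(\delta_{m-1}-d(x_m,x_{m-1}))/2\le \delta_{m-1}/2$, whence $\delta_m<\delta_0/2^m\to 0$.

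For the remaining two sequences I would simply invoke the decay just established. Step (7) gives $\varepsilon_m\in(0,\sigma_{m-1})$, so $\varepsilon_m<\sigma_{m-1}\to 0$. Step (4) gives $\lambda_m<t_m\tau^2/2^{2m+3}<t_m\to 0$ (the other bound $\lambda_m<t_m\sigma_m^{2s^2}/(2C_{\mathrm a}^{2s^2})$ would also suffice once $t_m,\sigma_m\to 0$). Hence all five sequences converge to $0$.

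For the inclusion, fix $m\geq 1$. As noted above, the choice of $(x_m,E_m)\in D_m$ forces $d(x_m,x_{m-1})<\delta_{m-1}$, and step (8) gives $\delta_m<(\delta_{m-1}-d(x_m,x_{m-1}))/2$. If $y\in\overline{B_{\mathbb{G}}(x_m,\delta_m)}$, then by the triangle inequality for the CC metric,
\[
d(y,x_{m-1})\leq d(y,x_m)+d(x_m,x_{m-1})\leq \delta_m+d(x_m,x_{m-1})<\frac{\delta_{m-1}+d(x_m,x_{m-1})}{2}<\delta_{m-1},
\]
where the last inequality again uses $d(x_m,x_{m-1})<\delta_{m-1}$. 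Therefore $y\in B_{\mathbb{G}}(x_{m-1},\delta_{m-1})$, which proves $\overline{B_{\mathbb{G}}(x_m,\delta_m)}\subset B_{\mathbb{G}}(x_{m-1},\delta_{m-1})$.

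I do not expect a genuine obstacle here: the argument is purely arithmetic bookkeeping on the constraints in Algorithm \ref{alg}. The one point that needs care is the nonemptiness of the intervals appearing in the recursion — in particular $\delta_{m-1}-d(x_m,x_{m-1})>0$ — which is exactly what is guaranteed by the membership condition $d(x,x_{m-1})<\delta_{m-1}$ built into the definition of $D_m$ and the existence of the minimizing pair $(x_m,E_m)$ produced in step (6).
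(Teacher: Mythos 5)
Your proof is correct and follows essentially the same recursive bookkeeping argument as the paper, which simply refers the reader to \cite[Lemma 6.5]{PS16}. One small inaccuracy: the intermediate inequality $t_m\tau^2/2^{2m+3}<t_m$ need not hold for small $m$ when $\tau>1$, but this does not affect the conclusion since $t_m\tau^2/2^{2m+3}\to 0$ anyway (and, as you note, the alternative bound $\lambda_m< t_m\sigma_m^{2s^2}/(2C_{\mathrm{a}}^{2s^2})$ also yields $\lambda_m\to 0$ once $t_m,\sigma_m\to 0$).
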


%\begin{proof}
%We only prove that the sequence $\eta_m$ converges to $0$, in fact the rest of the proof is exactly equal to \cite[Lemma 6.5]{PS16}. 
%Algorithm \ref{alg}(2) and $\sigma_0= 2$ gives $0 < \sigma_m \leq 2/4^m$ so $\sigma_m\to 0$. Combining this with Algorithm \ref{alg}(3,4,7,8) shows the other sequences converge to $0$. Let $x\in \overline{B_{\mathbb{H}}(x_m,\delta_m)}$. Then Algorithm \ref{alg}(6,8) gives:
%\begin{align*}
%d(x, x_{m-1})&\leq \delta_m+ d(x_m, x_{m-1})\\
%&< \delta_{m-1}/2 + d(x_m,x_{m-1})/2\\
%&<\delta_{m-1}.
%\end{align*}
%This shows the desired inclusion.
%\end{proof}

We record for later use that $\mathrm{Lip}_{\mathbb{G}}(f_m)\leq 1$ for all $m\geq 1$ and we define $\varepsilon'_m>0$ by letting
\begin{equation}\label{defepsprimo}
\varepsilon'_m:=\min\{\varepsilon_m/2,\, \sigma_{m-1}/2\}.
\end{equation}

We next show that the sets $D_{m}$ form a decreasing sequence. This is an adaptation of \cite[Lemma 3.3]{DM11}.

\begin{lemma}\label{lemmachiave}
The following statements hold.
\begin{enumerate}
\item If $m\geq 1$ and $(x,E)\in D_{m+1}$, then
\[(x_{m-1}, E_{m-1})\leq_{(f_m, \sigma_m-\varepsilon'_m)} (x,E).\]
\item If $m\geq 1$, then $D_{m+1}\subset D_m$.
\item If $m\geq 0$ and $(x,E)\in D_{m+1}$, then $ d(E(0),E_m(0))\leq \sigma_m$.
\end{enumerate}
\end{lemma}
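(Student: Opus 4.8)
The plan is to prove the three statements of Lemma \ref{lemmachiave} more or less simultaneously by induction on $m$, following the structure of \cite[Lemma 3.3]{DM11} but accounting for the Carnot-group exponents $1/(2s^2)$ and $1/s^2$ that appear in Notation \ref{comparison} and in Algorithm \ref{alg}. The starting point is the defining property of $D_{m+1}$: if $(x,E)\in D_{m+1}$ then by Algorithm \ref{alg}(5) we have $d(x,x_m)<\delta_m$ and there is some $\varepsilon\in(0,\sigma_m)$ with $(x_m,E_m)\le_{(f_{m+1},\sigma_m-\varepsilon)}(x,E)$. The goal of part (1) is to "shift the basepoint back" from $(x_m,E_m)$ to $(x_{m-1},E_{m-1})$, at the cost of worsening the tolerance from something involving $\sigma_m$ to $\sigma_m-\varepsilon'_m$, and while switching the comparison function from $f_{m+1}$ to $f_m$.

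First I would unwind $f_{m+1}=f_m+t_m\langle\cdot,E_m(0)\rangle$ (Algorithm \ref{alg}(1)) and use Lemma \ref{lemmascalarlip}(2) to see that the $\bbG$-linear correction term contributes the \emph{same} increment $t_m\langle\exp(tE)\cdot E_m(0),\ldots\rangle$ at both basepoints — more precisely, $\langle x'\exp(tE),E_m(0)\rangle-\langle x',E_m(0)\rangle=\langle\exp(tE),E_m(0)\rangle=t\langle p(E),p(E_m)\rangle$ is independent of $x'$ — so the difference-of-increments quantity controlled in Notation \ref{comparison} is unchanged when we replace $f_{m+1}$ by $f_m$; only the directional-derivative values $Ef_{m+1}(x)=Ef_m(x)+t_m\langle p(E),p(E_m)\rangle$ shift. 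This is where the smallness of $t_m$ chosen in Algorithm \ref{alg}(3) (namely $t_m<\sigma_{m-1}/(s^2m)$, and $t_m<t_{m-1}/2$) gets used to absorb the shift into the tolerance. Next I would chain the relation $(x_{m-1},E_{m-1})\le_{(f_m,\sigma_{m-1}-\varepsilon_m)}(x_m,E_m)$ from Algorithm \ref{alg}(7) with $(x_m,E_m)\le_{(f_{m+1},\sigma_m-\varepsilon)}(x,E)$, using the triangle inequality on the increment differences and the fact that $(a+b)^{1/(2s^2)}\le a^{1/(2s^2)}+b^{1/(2s^2)}$ (subadditivity of $t\mapsto t^{1/(2s^2)}$ since $1/(2s^2)\le 1$) to combine the two error bounds; the arithmetic has to be arranged so that the combined constant lands below $\sigma_m-\varepsilon'_m$, for which the choices $\sigma_m<\sigma_{m-1}/4$ (Algorithm \ref{alg}(2)) and the definition $\varepsilon'_m=\min\{\varepsilon_m/2,\sigma_{m-1}/2\}$ in \eqref{defepsprimo} are exactly what is needed. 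Part (2) then follows from part (1) almost immediately: $(x,E)\in D_{m+1}$ gives $d(x,x_m)<\delta_m$, hence $d(x,x_{m-1})\le d(x,x_m)+d(x_m,x_{m-1})<\delta_m+d(x_m,x_{m-1})<\delta_{m-1}$ by Algorithm \ref{alg}(8), and part (1) supplies a witness $\varepsilon'_m\in(0,\sigma_{m-1})$ for the comparison relation defining $D_m$, so $(x,E)\in D_m$.

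For part (3), I would argue that the increment-difference bound in $(x_m,E_m)\le_{(f_{m+1},\sigma_m-\varepsilon)}(x,E)$, specialized appropriately and combined with the lower bound from Lemma \ref{distanceinequality} (via Lemma \ref{lemmascalarlip}), forces $p(E)$ to be close to $p(E_m)$; since $\omega(\tilde E)=|p(\tilde E)|$ and $\exp(E)=E(0)=(p(E),0)$ in the chosen coordinates, closeness of $p(E)$ to $p(E_m)$ is the same as $d(E(0),E_m(0))$ being small, and tracking constants shows it is bounded by $\sigma_m$. Concretely one would take $t\to 0$ in the comparison inequality after subtracting the linear-in-$t$ main terms, or test the inequality at a single small $t$ and use that $f_{m+1}$ has Lipschitz constant at most $1$, to extract $|p(E)-p(E_m)|\lesssim$ (the tolerance), then check the constant is $\le 1$ so that the tolerance itself ($<\sigma_m$, up to the subadditive-exponent manipulation) dominates.

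The main obstacle I anticipate is purely bookkeeping: making sure that when the two $\le_{(h,\sigma)}$ relations are chained, the resulting tolerance is genuinely $\le\sigma_m-\varepsilon'_m$ and not merely $O(\sigma_m)$ — this requires using the geometric-decay choices $\sigma_m<\sigma_{m-1}/4$, $t_m<t_{m-1}/2$, and the $1/(s^2m)$ factor in $t_m$ in just the right combination, together with the subadditivity of $x\mapsto x^{1/(2s^2)}$, and there is a real risk of the constants not quite closing unless the inductive statement is set up with the right slack (which is precisely why \eqref{defepsprimo} is defined the way it is). Since the analogous lemma for the Heisenberg group is \cite[Lemma 3.3]{DM11} and the only structural change here is the exponent $1/(2s^2)$ in place of $1/2$ together with Lemma \ref{closedirectioncloseposition}'s exponent $1/s^2$, the cleanest write-up would follow that proof line by line, inserting the subadditivity inequality wherever two error terms of the form $(\cdot)^{1/(2s^2)}$ are added and re-deriving the constant bounds; I do not expect any essentially new difficulty beyond this careful propagation of constants.
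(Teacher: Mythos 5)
Your high-level plan (induction in $m$, chaining the relation from Algorithm \ref{alg}(7) with the one defining $D_{m+1}$, reducing part (2) to part (1)) matches the paper's structure, but three of your key mechanisms do not survive scrutiny.

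First, for part (1) the chaining cannot be closed by subadditivity of $t\mapsto t^{1/(2s^2)}$, and in fact subadditivity points the wrong way. After passing through the intermediate pair $(x_m,E_m)$, you end up needing an inequality of the shape $K\,A^{1/(2s^2)}+(B-A)\le K\,B^{1/(2s^2)}$ with $A=Ef_m(x)-E_mf_m(x_m)$ and $B=Ef_m(x)-E_{m-1}f_m(x_{m-1})$; the term $(B-A)$ arises \emph{linearly} (from Algorithm \ref{alg}(8)), not as a $1/(2s^2)$-power. The paper handles this with the factorization
\[
B-A=\Bigl(B^{\frac{2s^2-1}{2s^2}}+\cdots+A^{\frac{2s^2-1}{2s^2}}\Bigr)\bigl(B^{\frac{1}{2s^2}}-A^{\frac{1}{2s^2}}\bigr)\le K\bigl(B^{\frac{1}{2s^2}}-A^{\frac{1}{2s^2}}\bigr),
\]
which uses $0\le A\le B\le 2$ and $K\ge 4s^2$; subadditivity gives only $A^{1/(2s^2)}+ (B-A)^{1/(2s^2)}\ge B^{1/(2s^2)}$, which is useless here. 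Moreover you do not mention the case split on $|t|$: Algorithm \ref{alg}(8) only gives the linear estimate for $|t|<C_D\delta_m^{1/s}/\varepsilon_m$, and for larger $|t|$ the paper switches to an entirely different decomposition using Proposition \ref{conjugatedistance} and the bound $\delta_m\le\varepsilon_m|t|$. You also need an explicit third error term $|f_m(x\exp(tE_{m-1}))-f_m(x\exp(tE_m))|\le\sigma_{m-1}|t|$ for the change of direction from $E_m$ to $E_{m-1}$, and this is exactly where the inductive hypothesis ``(3) holds for $m-1$'' is consumed; your proposal does not make this dependency visible.

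Second, your strategy for part (3) is off base. Information about $|p(E)-p(E_m)|$ does not come from differentiating the increment-difference bound in the comparison relation, and Lemma \ref{distanceinequality} is not the relevant tool. The correct ingredient is the near-maximality in Algorithm \ref{alg}(6): from $(x,E)\in D_{m+1}\subset D_m$ one gets $Ef_m(x)\le E_mf_m(x_m)+\lambda_m$, while $Ef_{m+1}(x)\ge E_mf_{m+1}(x_m)$ and Algorithm \ref{alg}(1) convert to $E_mf_m(x_m)+t_m\le Ef_m(x)+t_m\langle E(0),E_m(0)\rangle$. Combining these forces $\langle E(0),E_m(0)\rangle\ge 1-\lambda_m/t_m$, hence $|p(E)-p(E_m)|\le(2\lambda_m/t_m)^{1/2}$, and one then needs Lemma \ref{closedirectioncloseposition} (with $g(t)=\exp(tE_m)$, $N=1$) together with the smallness of $\lambda_m/t_m$ from Algorithm \ref{alg}(4) to pass from the Euclidean bound on $p(E)-p(E_m)$ to $d(E(0),E_m(0))\le\sigma_m$. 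This maximality argument is a genuinely different mechanism than what you describe, and without it part (3) — and hence the inductive base for (1) at the next step — does not close.
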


\begin{proof}
If $m=0$ then (3) holds since
\[d(E(0),E_0(0))\leq d(E(0),0)+d(0,E_0(0))\leq 2=\sigma_0.\]
It is enough to check that, whenever $m\geq 1$ and (3) holds for $m-1$, then (1), (2) and (3) hold for $m$. Fix $m\geq 1$ and assume that (3) holds for $m-1$, i.e.
\[ d(E(0),E_{m-1}(0)) \leq \sigma_{m-1}\quad  \mbox{for all}\quad (x, E)\in D_m.\]

\medskip

\emph{Proof of (1).} Algorithm \ref{alg}(6) states that $(x_m,E_m)\in D_m$ and hence
\begin{equation}\label{stimavett}
d(E_m(0),E_{m-1}(0))\leq \sigma_{m-1}.
\end{equation}
Let $(x, E)\in D_{m+1}$. In particular, by Algorithm \ref{alg}(5) we have $Ef_{m+1}(x)\geq E_{m}f_{m+1}(x_{m})$. Notice that, since $\omega(E_{m})=\omega(E)=1$, we have $\langle E_m(0),E_m(0)\rangle=1$ and $\langle E(0), E_m(0) \rangle\leq 1$. Let $A:=Ef_m(x)-E_mf_m(x_m)$. Lemma \ref{lemmascalarlip} and the inequality $Ef_{m+1}(x)\geq E_{m}f_{m+1}(x_{m})$ give
\[Ef_{m+1}(x)-E_mf_{m+1}(x_m)-t_m\langle E(0), E_m(0) \rangle+t_m\geq 0.\]
Combining again Algorithm \ref{alg}(5) with the above inequality, gives
\[Ef_m(x)\geq E_mf_m(x_m)\geq E_{m-1} f_m(x_{m-1}).\]
In particular, $Ef_{m}(x)\geq E_{m-1}f_{m}(x_{m-1})$ which is the first requirement for $(x_{m-1}, E_{m-1})\leq_{(f_m, \sigma_m-\varepsilon'_m)} (x,E)$.

Let $B:=E f_m(x)-E_{m-1}f_m(x_{m-1})\geq 0$. Lemma \ref{lipismaximal} and $\mathrm{Lip}_{\mathbb{G}}(f_m)\leq 1$ implies that $0\leq A\leq B\leq 2$. Using these inequalities and $K\geq 4s^2$ gives
\begin{align}\label{factorize}
K(B^{\frac{1}{2s^2}}-A^{\frac{1}{2s^2}})&\geq (B^{\frac{2s^2-1}{2s^2}}+B^{\frac{2s^2-2}{2s^2}}A^{\frac{1}{2s^2}}+\ldots+B^{\frac{1}{2s^2}}A^{\frac{2s^2-2}{2s^2}}+A^{\frac{2s^2-1}{2s^2}})(B^{\frac{1}{2s^2}}-A^{\frac{1}{2s^2}})\nonumber  \\
&=B-A\nonumber \\
&=E_mf_m(x_m)-E_{m-1}f_m(x_{m-1}).
\end{align}
Since $A\geq Ef_{m+1}(x)-E_mf_{m+1}(x_m)$, \eqref{factorize} implies that
\begin{align}\label{estimate3}
&  E_mf_m(x_m)-E_{m-1}f_m(x_{m-1})+K( Ef_{m+1}(x)-E_mf_{m+1}(x_m))^{\frac{1}{2s^2}}\nonumber \\
& \qquad \leq K B^{\frac{1}{2s^2}}.
\end{align}
To prove the second requirement of $(x_{m-1}, E_{m-1})\leq_{(f_m, \sigma_m-\varepsilon'_m)} (x,E)$ we need to estimate
\begin{equation}\label{thingtoestimate}
|(f_m(x\exp(tE_{m-1}))-f_m(x))-(f_m(x_{m-1}\exp(tE_{m-1}))-f_m(x_{m-1}))|.
\end{equation}
We consider two cases, depending on whether $t$ is small or large. 

\medskip

\emph{Suppose $|t|<3C_D\delta_m^{\frac{1}{s}}/\varepsilon_m$.}
Estimate \eqref{thingtoestimate} as follows
\begin{align}\label{estimate}
&|(f_m(x\exp(tE_{m-1})) - f_m(x))-(f_m(x_{m-1}\exp(tE_{m-1}))-f_m(x_{m-1}))|\nonumber \\
&\qquad \leq |(f_m(x\exp(tE_m))-f_m(x)) - (f_m(x_m\exp(tE_m))-f_m(x_m))| \nonumber \\
&\qquad \quad + |(f_m(x_m\exp(tE_m))-f_m(x_m))\nonumber \\
&\qquad \quad \qquad -(f_m(x_{m-1}\exp(tE_{m-1}))-f_m(x_{m-1}))|  \nonumber \\
&\qquad \quad +| f_m(x\exp(tE_{m-1}))-f_m(x\exp(tE_m))|.
\end{align}
We consider the three terms on the right hand side of \eqref{estimate} separately.

Firstly, Algorithm \ref{alg}(1) and Lemma \ref{lemmascalarlip} give
\begin{align}\label{eqz1}
&(f_m(x\exp(tE_m))-f_m(x))-(f_m (x_{m}\exp(tE_{m}))- f_m(x_{m}))\\
&\qquad =(f_{m+1}(x\exp(tE_m))-f_{m+1}(x)) - ( f_{m+1}(x_m\exp(tE_m)) - f_{m+1}(x_m)) \nonumber \\
&\qquad \quad  -t_m\langle x\exp(tE_m), E_m(0)\rangle +t_m\langle x,E_m(0)\rangle \nonumber \\
&\qquad \quad+t_m\langle x_m\exp(tE_m), E_m(0)\rangle- t_m\langle x_m, E_m(0)\rangle \nonumber \\
&\qquad=(f_{m+1}(x\exp(tE_m))-f_{m+1}(x)) - ( f_{m+1}(x_m\exp(tE_m)) - f_{m+1}(x_m))\nonumber .
\end{align}
Since $(x,E)\in D_{m+1}$, using \eqref{eqz1} gives
\begin{align}\label{estimate2}
&|(f_m(x\exp(tE_{m}))-f_m(x))-(f_m(x_{m}\exp(tE_{m}))-f_m(x_{m}))| \nonumber \\
&\qquad \leq K(\sigma_m+(Ef_{m+1}(x)-E_mf_{m+1}(x_m))^{\frac{1}{2s^2}})|t|.
\end{align}

For the second term in \eqref{estimate} we recall that, for the values of $t$ we are considering, Algorithm \ref{alg}(8) states that
\begin{align}
&|(f_m(x_m\exp(tE_m))-f_m(x_m))-(f_m(x_{m-1}\exp(tE_{m-1}))-f_m(x_{m-1}))| \label{feb22a}\\
&\qquad \leq( E_mf_m(x_m)-E_{m-1}f_m(x_{m-1})+\sigma_{m-1}) |t|.\nonumber
\end{align}

The final term in \eqref{estimate} is estimated using $\mathrm{Lip}_{\mathbb{G}}(f_{m})\leq 1$ and \eqref{stimavett}:
\begin{align}
|f_m(x\exp(tE_{m-1}))-f_m(x\exp(tE_m))| &\leq d(x\exp(tE_{m-1}), x\exp(tE_m)) \label{feb22b}\\
&= d(tE_{m-1}(0),tE_{m}(0)) \nonumber \\
& \leq \sigma_{m-1}|t|.\nonumber 
\end{align}

Adding \eqref{estimate2}, \eqref{feb22a} and \eqref{feb22b}, then using \eqref{estimate3}, \eqref{defepsprimo} and Algorithm \ref{alg}(2), gives
\begin{align*}
&|(f_m(x\exp(tE_{m-1}))-f_m(x))-(f_m(x_{m-1}\exp(tE_{m-1}))-f_m(x_{m-1}))| \\
& \qquad \leq K(\sigma_m + (Ef_{m+1}(x)-E_mf_{m+1}(x_m))^{\frac{1}{2s^2}})|t| \\
&\qquad \quad + ( E_mf_m(x_m)-E_{m-1}f_m(x_{m-1})+\sigma_{m-1}) |t|\\
&\qquad \quad + \sigma_{m-1}|t|\\
& \qquad \leq K(\sigma_{m-1}-\varepsilon_{m}' + (Ef_{m}(x)-E_{m-1}f_{m}(x_{m-1}))^{\frac{1}{2s^2}})|t|,
\end{align*}
which gives the required estimate of \eqref{thingtoestimate} for all $t$ with $|t|<3C_D\delta_m^{\frac{1}{s}}/\varepsilon_m$.

\emph{Suppose $3C_D\delta_m^{\frac{1}{s}}/\varepsilon_m \leq |t| < 1$.}
In particular, this implies
\begin{equation}\label{refest} \delta_{m} \leq \varepsilon_{m}^{s}t^{s}/3C_D \leq \varepsilon_{m}|t|,\end{equation}
where in the last inequality above we used that
\[\varepsilon_{m}|t|/3C_D\leq \varepsilon_{m}/3C_D\leq 1,\]
which follows from $\varepsilon_{m}\leq 2$ and $C_D\geq 1$.

We estimate \eqref{thingtoestimate} as follows:
\begin{align*}
&|(f_m(x\exp(tE_{m-1}))-f_m(x))-(f_m(x_{m-1}\exp(tE_{m-1}))-f_m(x_{m-1}))|\\
&\qquad \leq |(f_m(x_m\exp(tE_{m-1}))-f_m(x_m))\\
&\qquad \quad \quad -(f_m(x_{m-1}\exp(tE_{m-1}))-f_m(x_{m-1}))|\\
&\qquad \quad + |f_{m}(x) - f_{m}(x_{m})|\\
&\qquad \quad + |f_m(x\exp(tE_{m-1})) - f_m(x_m\exp(tE_{m-1}))|,
\end{align*}
and again we separately consider the three terms on the right hand side.

By Algorithm \ref{alg}(7) we have
\[(x_{m-1}, E_{m-1})\leq_{(f_m, \sigma_{m-1}-\varepsilon_m)} (x_m, E_m),\] 
which gives
\begin{align}
&|(f_m(x_m\exp(tE_{m-1}))-f_m(x_m))-(f_m(x_{m-1}\exp(tE_{m-1}))-f_m(x_{m-1}))|\nonumber \\
&\qquad \leq K(\sigma_{m-1}-\varepsilon_{m} + (E_{m}f_{m}(x_{m})-E_{m-1}f_{m}(x_{m-1}))^{\frac{1}{2s^2}})|t|.\label{feb221}
\end{align}

For the estimate of the second term we use $\mathrm{Lip}_{\mathbb{G}}(f_{m})\leq 1$ and \eqref{refest} to get
\begin{equation}\label{feb222}
|f_m(x)-f_m(x_m)|\leq d(x,x_{m}) \leq \delta_m\leq \varepsilon_m |t|\leq K\varepsilon_m |t|/(4s^2).
\end{equation}

Notice that $x\exp(tE_{m-1})$ and  $x_{m}\exp(tE_{m-1})$ belong to $\overline{B_{\mathbb{G}}(x_{0},2+\delta_{0})}$. %Hence we can use the constants $C_{\mathrm{H}}$ and $C_{V}$ defined immediately before Algorithm \ref{alg}.
Using Proposition \ref{euclideanheisenberg} and Proposition \ref{conjugatedistance}, recalling that $\delta_m <1$, we get
\begin{align}
&|f_m(x\exp(tE_{m-1}))-f_m(x_m\exp(tE_{m-1}))|\nonumber \\
&\qquad \leq d(x\exp(tE_{m-1}),x_{m}\exp(tE_{m-1}))\nonumber \\
%&\qquad \leq C_{\mathrm{H}}|x\exp(tE_{m-1})-x_{m}\exp(tE_{m-1})|^{\frac{1}{s}}\\
%&\qquad \leq C_{\mathrm{H}}(1+C_{V})^{\frac{1}{s}}|x-x_{m}|^{\frac{1}{s}}\\
%&\qquad \leq C_{\mathrm{H}}^2(1+C_{V})^{\frac{1}{s}}d(x,x_{m})^{\frac{1}{s}}\\
&\qquad = d(\exp(tE_{m-1})^{-1}x_m^{-1} x\exp(tE_{m-1}))\nonumber \\
&\qquad \leq C_D(d(x_m, x)+ t^{\frac{1}{s}} d(x_m, x)^{\frac{s-1}{s}}+t^{\frac{s-1}{s}} d(x_m, x)^{\frac{1}{s}})\nonumber \\
%&\qquad \leq C_{\mathrm{H}}^{2}(1+C_{V})^{\frac{1}{s}}\delta_{m}^{\frac{1}{s}}\\
&\qquad \leq C_D(\delta_m+ t^{\frac{1}{s}} \delta_m^{\frac{s-1}{s}}+t^{\frac{s-1}{s}} \delta_m^{\frac{1}{s}}) \nonumber \\
&\qquad \leq 3C_D \delta_m^{\frac{1}{s}}
 \leq \varepsilon_{m}|t| 
\leq K\varepsilon_{m} |t|/(4s^2). \label{feb223}
\end{align}

Combine \eqref{feb221}, \eqref{feb222} and \eqref{feb223} to obtain
\begin{align*}
&|(f_m(x\exp(tE_{m-1}))-f_m(x))-(f_m(x_{m-1}\exp(tE_{m-1}))-f_m(x_{m-1}))|\\
&\qquad \leq K(\sigma_{m-1}-\varepsilon_m/2+(E_mf_m(x_m)-E_{m-1}f_m(x_{m-1}))^{\frac{1}{2s^2}})\\
&\qquad \leq K(\sigma_{m-1}-\varepsilon'_m+(Ef_m(x)-E_{m-1}f_m(x_{m-1}))^{\frac{1}{2s^2}}),
\end{align*}
which gives the required estimate of \eqref{thingtoestimate} for all $t$ satisfying $3C_D\delta_m^{\frac{1}{s}}/\varepsilon_m \leq |t| < 1$. This completes the proof of (1).

\medskip

\emph{Proof of (2).} Suppose $(x,E)\in D_{m+1}$. Then $(x,E)\in D^{f_{m+1}}=D^{f_{m}}$ and Lemma \ref{inclusionballs} implies that $d(x,x_{m-1})<\delta_{m-1}$.
%Moreover, by \eqref{stimavett}
%\begin{align*}
%\omega(E-E_{m-1})&\leq \omega(E-E_m)+\omega(E_m-E_{m-1})\\
%&\leq \tau_0/(3*2^{m}) + \sigma_{m-1}\\
%&\leq \tau_0/(3*2^{m-1})
%\end{align*}
 Combining this with (1) gives $(x,E)\in D_{m}$. This completes the proof of (2).

\medskip
 
\emph{Proof of (3).} Suppose $(x,E)\in D_{m+1}$. Then by Algorithm \ref{alg}(5) we have $E_m f_{m+1}(x_m)\leq E f_{m+1}(x)$. Moreover, by Algorithm \ref{alg}(1) we have
\begin{equation}\label{aaa}
E_m f_{m}(x_m)+t_m\langle E_{m}(0),E_{m}(0) \rangle \leq E f_{m}(x)+t_m\langle E(0),E_{m}(0) \rangle.
\end{equation}
By (2), we have also that $(x,E)\in D_m$, so Algorithm \ref{alg}(6) implies
\begin{equation}\label{bbb}
Ef_m(x)\leq E_m f_m(x_m)+\lambda_m.
\end{equation}
Combining \eqref{aaa} and \eqref{bbb} gives $t_m\leq t_m\langle E(0),E_m(0)\rangle+\lambda_m$, which, up to rearrangements, implies
\[\langle E(0),E_{m}(0) \rangle\geq 1-\lambda_m/t_m.\]
Therefore one has
\begin{equation}\label{stimam}
|p(E)-p(E_m)|=|E(0)-E_m(0)|=(2-2\langle E(0), E_m(0))^{\frac{1}{2}}\leq (2\lambda_m/t_m)^{\frac{1}{2}}.
\end{equation}
Combining Algorithm \ref{alg}(5) and Lemma \ref{closedirectioncloseposition} with $g(t):=\exp(tE_m)$, $N=1$ and $D=(2\lambda_m/t_m)^{\frac{1}{2}}$, we get
\begin{align*}
d(E(0),E_m(0))=d(\exp(E),\exp(E_m)) &\leq C_{\mathrm{a}}(2\lambda_m/t_m)^{\frac{1}{2s^2}}\\%\leq C_{\mathrm{a}}| E(0)- E_m(0)|^{\frac{1}{s}}\\
%&=C_{\mathrm{a}}(2-2\langle E(0), E_m(0) \rangle)^{\frac{1}{4}}\\
%& \leq \\
& \leq \sigma_m,
\end{align*}
which proves (3).
\end{proof}

%\begin{lemma}
%For every $m\in\mathbb{N}$, the following inequality holds:
%\begin{align}
%\omega(E_m-E_0)\leq \frac{\tau_0}{3}\left(2-\frac{1}{2^{m-1}}\right)
%\end{align}
%\end{lemma}
%\begin{proof}
%By the triangle inequality and Algorithm \ref{alg}(5) we get
%\begin{align*}
%\omega(E_m-E_0)&\leq \sum_{i=0}^m \omega(E_i-E_{i-1})\\
%&\leq \sum_{i=0}^m\frac{\tau_0}{3*2^{i-1}}=\frac{\tau_0}{3}\left(2-\frac{1}{2^{m-1}}\right).
%\end{align*}
%\end{proof}
We next study the convergence of $(x_{m}, E_{m})$ and $f_{m}$. We show that the directional derivatives converge to a directional derivative of the limiting function, and the limit of $(x_{m},E_{m})$ belongs to $D_{m}$ for every $m$. This is an adaptation of \cite[Lemma 3.4]{DM11}.

\begin{lemma}\label{lemmaquasifinale}
The following statements hold:
\begin{enumerate}
\item $f_{m}\to f$ pointwise, where $f:\mathbb{G}\to \mathbb{R}$ is Lipschitz and $\mathrm{Lip}_{\mathbb{G}}(f)\leq 1$,
\item $f-f_m$ is $\mathbb{G}$-linear and $\mathrm{Lip}_{\mathbb{G}}(f-f_m) \leq 2t_m$ for $m\geq 0$,
\item There exist $x_{\ast}\in N$ and $E_{\ast} \in V$ with $\omega(E_{\ast})=1$ such that for $m \geq 0$ we have
\[d(x_{\ast},x_m)< \delta_m, \quad\mbox{and}\quad  d(E_{\ast}(0),E_m(0))\leq \sigma_m.\]
\item $E_{\ast}f(x_{\ast})$ exists, is strictly positive and $E_mf_m(x_m)\uparrow E_{\ast}f(x_{\ast})$,
\item $(x_{m-1},E_{m-1})\leq_{(f_m, \sigma_{m-1}-\varepsilon'_m)} (x_{\ast},E_{\ast})$ for $m\geq 1$,
\item $(x_{\ast},E_{\ast})\in D_m$ for $m\geq 1$,
\item $\omega(E_{\ast}-E_0)<\tau$.
\end{enumerate}
\end{lemma}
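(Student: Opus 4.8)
The plan is to prove Lemma \ref{lemmaquasifinale} by exploiting the rapid decay of the parameter sequences from Algorithm \ref{alg} together with the nesting of the balls $\overline{B_{\mathbb{G}}(x_m,\delta_m)}$ established in Lemma \ref{inclusionballs}. First I would handle (1) and (2): since $f_m(x)=f_{m-1}(x)+t_{m-1}\langle x,E_{m-1}(0)\rangle$ and $\mathrm{Lip}_{\bbG}(x\mapsto \langle x,E_{m-1}(0)\rangle)=1$ by Lemma \ref{lemmascalarlip}(1), the telescoping tail $\sum_{k\geq m} t_k \langle x, E_k(0)\rangle$ converges uniformly on bounded sets because $t_k<t_{k-1}/2$ forces $\sum_k t_k<\infty$; the limit $f$ is Lipschitz with $\mathrm{Lip}_{\bbG}(f)\leq 1$ (using $\mathrm{Lip}_{\bbG}(f_m)\leq 1$, already recorded), and $f-f_m=\sum_{k\geq m} t_k\langle\cdot,E_k(0)\rangle$ is a uniform limit of $\bbG$-linear maps, hence $\bbG$-linear, with $\mathrm{Lip}_{\bbG}(f-f_m)\leq \sum_{k\geq m}t_k\leq 2t_m$.

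Next, for (3): by Lemma \ref{inclusionballs} the closed balls $\overline{B_{\bbG}(x_m,\delta_m)}$ are nested and $\delta_m\to 0$, so by completeness they intersect in a single point $x_{\ast}$, which lies in $\bigcap_m U_m=N$ since $\overline{B_{\bbG}(x_m,\delta_m)}\subset U_m$; moreover $d(x_{\ast},x_m)<\delta_m$. For the directions, Lemma \ref{lemmachiave}(3) applied with $(x_{m+1},E_{m+1})\in D_{m+1}$ gives $d(E_{m+1}(0),E_m(0))\leq\sigma_m$, and since $\sigma_m<\sigma_{m-1}/4$ the sequence $(E_m(0))$ is Cauchy in $\bbR^n$; writing $E_{\ast}$ for the unique element of $V_1$ with $E_{\ast}(0)=\lim E_m(0)$ (using that $E\mapsto E(0)$ is injective on $V_1$, recalled after \eqref{vfcoordinates}) we get $\omega(E_{\ast})=\lim\omega(E_m)=1$ and, summing the geometric tail, $d(E_{\ast}(0),E_m(0))\leq\sum_{k\geq m}\sigma_k\leq\sigma_m\cdot\tfrac{4}{3}$; a small adjustment (one can in fact absorb constants since $\sigma_{m+1}<\sigma_m/4$ gives $\sum_{k\geq m+1}\sigma_k<\sigma_m/3$, so $d(E_{\ast}(0),E_m(0))\leq\sigma_m$) yields the stated bound.

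For (4)--(6): the sequence $E_mf_m(x_m)$ is bounded above by $\mathrm{Lip}_{\bbG}(f_m)\leq 1$; monotonicity $E_mf_m(x_m)\leq E_{m+1}f_{m+1}(x_{m+1})$ follows from $(x_{m+1},E_{m+1})\in D_{m+1}\subset D_m$ (Lemma \ref{lemmachiave}(2)) and Algorithm \ref{alg}(5), combined with Algorithm \ref{alg}(1) and $\langle E_{m+1}(0),E_m(0)\rangle\leq 1$, exactly as in the proof of Lemma \ref{lemmachiave}(1); hence it converges to some limit $\ell$. Strict positivity: $E_0f_0(x_0)\geq 0$ by Assumptions \ref{Ass}, and the first step of the algorithm adds $t_0\langle x,E_0(0)\rangle$, so $E_0 f_1(x_0)=E_0f_0(x_0)+t_0\geq t_0>0$; monotonicity propagates this. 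To identify $\ell$ with $E_{\ast}f(x_{\ast})$, one passes to the limit in the difference quotients: writing $f=f_m+(f-f_m)$ with $f-f_m$ $\bbG$-linear of small Lipschitz constant $2t_m$, and using that $x_{\ast}\in\overline{B_{\bbG}(x_m,\delta_m)}$, $d(E_{\ast}(0),E_m(0))\leq\sigma_m$, one estimates $|E_{\ast}f(x_{\ast})-E_mf_m(x_m)|$ by a sum of terms each controlled by $\sigma_m,\delta_m,t_m$ and by the bound in Algorithm \ref{alg}(8) on the comparison of increments near $x_m$ versus $x_{m-1}$; letting $m\to\infty$ gives both existence of $E_{\ast}f(x_{\ast})$ and the equality. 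Statement (5) follows by passing to the limit over $(x,E)=(x_m,E_m)$ (as $m\to\infty$ along the subsequence, or directly taking $(x_{m'},E_{m'})\in D_{m+1}$ for $m'$ large) in the relation $(x_{m-1},E_{m-1})\leq_{(f_m,\sigma_{m-1}-\varepsilon'_m)}(x_{m'},E_{m'})$ from Lemma \ref{lemmachiave}(1), using continuity of all the quantities involved; and (6) then follows since $d(x_{\ast},x_{m-1})<\delta_{m-1}$ together with (5) are precisely the defining conditions for $(x_{\ast},E_{\ast})\in D_m$.

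Finally (7): $\omega(E_{\ast}-E_0)=|p(E_{\ast})-p(E_0)|=|E_{\ast}(0)-E_0(0)|\leq\sum_{k\geq 0}|E_{k+1}(0)-E_k(0)|$, and by \eqref{stimam} in the proof of Lemma \ref{lemmachiave}(3) each term satisfies $|E_{k+1}(0)-E_k(0)|\leq(2\lambda_{k+1}/t_{k+1})^{1/2}$; since Algorithm \ref{alg}(4) forces $\lambda_{k+1}<t_{k+1}\tau^2/2^{2(k+1)+3}$, this gives $|E_{k+1}(0)-E_k(0)|<\tau/2^{k+2}$, so the sum is $<\tau\sum_{k\geq 0}2^{-(k+2)}=\tau/2<\tau$, with strict inequality as required. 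The main obstacle is the limiting argument for (4), specifically showing that $E_mf_m(x_m)$ actually converges to a genuine directional derivative $E_{\ast}f(x_{\ast})$ of the limit function rather than merely to a limsup/liminf of difference quotients; this requires carefully combining the Algorithm \ref{alg}(8) increment estimate (which controls how the difference quotient of $f_m$ at $x_m$ in direction $E_m$ compares to that at $x_{m-1}$ in direction $E_{m-1}$, on a $t$-window of size $\sim\delta_m^{1/s}/\varepsilon_m$) with the $\bbG$-linear corrections of size $t_m$ and the position/direction errors $\delta_m,\sigma_m$, and checking that all error terms are summable so that the diagonal limit exists — this is the step where the precise nesting of the scales chosen in Algorithm \ref{alg} is essential, and it is handled exactly as in \cite[Lemma 3.4]{DM11} and \cite[Lemma 6.6]{PS16}.
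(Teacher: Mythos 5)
Your overall strategy matches the paper's, and parts (1), (2), (5), (6) are essentially correct as you describe them (though your treatment of (4) is only a sketch that defers to \cite{DM11, PS16}). However, there are two genuine gaps.

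In (3), your estimate on the directions is off by a constant. You bound $d(E_{\ast}(0),E_m(0))$ by the telescoping sum $\sum_{k\geq m} d(E_{k+1}(0),E_k(0))\leq\sum_{k\geq m}\sigma_k$, and while indeed $\sum_{k\geq m+1}\sigma_k<\sigma_m/3$, this only gives $d(E_{\ast}(0),E_m(0))<\sigma_m+\sigma_m/3=4\sigma_m/3$, not $\leq\sigma_m$. The ``small adjustment'' you propose does not remove the leading $\sigma_m$ from the first term. The paper avoids this by not telescoping: for every $q\geq m$ one has $(x_q,E_q)\in D_{q+1}\subset D_{m+1}$ by Lemma \ref{lemmachiave}(2), and then Lemma \ref{lemmachiave}(3) applied to the pair $(x_q,E_q)\in D_{m+1}$ gives the uniform bound $d(E_q(0),E_m(0))\leq\sigma_m$ for all $q\geq m$ directly; passing $q\to\infty$ yields $d(E_{\ast}(0),E_m(0))\leq\sigma_m$.

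In (7), the estimate from \eqref{stimam} is $\omega(E_{k+1}-E_k)\leq(2\lambda_k/t_k)^{1/2}$ (note the index, not $\lambda_{k+1}/t_{k+1}$), and it is only valid for $k\geq 1$: \eqref{stimam} is established inside the proof of Lemma \ref{lemmachiave}(3) for $m\geq 1$ using Algorithm \ref{alg}(6) applied to $(x,E)\in D_{m+1}\subset D_m$, and moreover $\lambda_0$ is not even defined by the algorithm. Hence the $k=0$ term $\omega(E_1-E_0)$ must be bounded by a separate argument. The paper does this using $E_0f_1(x_0)\leq E_1f_1(x_1)+\lambda_1$ (from Algorithm \ref{alg}(5)--(6)), which yields $\omega(E_1-E_0)^2\leq 2\lambda_1/t_0+4\mathrm{Lip}_{\bbG}(f_0)/t_0$, and then invokes Assumption \ref{Ass} ($\mathrm{Lip}_{\bbG}(f_0)\leq t_0\tau^2/32$) together with Algorithm \ref{alg}(4) to conclude $\omega(E_1-E_0)\leq\tau/2$. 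Your claimed per-term bound $\tau/2^{k+2}$ at $k=0$ is not justified without this additional ingredient, and without it the total bound $<\tau$ is not obtained.
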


\begin{proof}
We prove each statement individually.

\medskip

\emph{Proof of (1).} Algorithm \ref{alg}(1) gives $f_m(x)=f_0(x)+\langle x,\sum_{k=0}^{m-1}t_kE_{k}(0)\rangle$. Define $f:\mathbb{G}\to\mathbb{R}$ by
\begin{equation}\label{deff}
f(x):=f_0(x)+\Big\langle x,\sum_{k=0}^{\infty}t_k E_{k}(0)\Big\rangle.
\end{equation}
Notice $|f(x)-f_m(x)|\leq | x | \sum_{k=m}^{\infty} t_k |E_{k}(0)|$. Hence by Algorithm \ref{alg}(3) $f_m\to f$ pointwise and, since $\mathrm{Lip}_{\mathbb{G}}(f_m)\leq 1$, we deduce $\mathrm{Lip}_{\mathbb{G}}(f)\leq 1$.

\medskip

\emph{Proof of (2).} Lemma \ref{lemmascalarlip} shows that $f-f_{m}$ is $\mathbb{G}$-linear. Moreover, by Algorithm \ref{alg}(3), we have that for every $m\geq 0$
\[\mathrm{Lip}_{\mathbb{G}}(f-f_m)  \leq \sum_{k=m}^{\infty} t_k \leq t_m \sum_{k=m}^{\infty} \frac{1}{2^{k-m}} \leq 2t_m.\]

\medskip

\emph{Proof of (3).} Let $q\geq m\geq 0$. The definition of $D_{q+1}$ in Algorithm \ref{alg}(5) shows that $(x_q,E_q)\in D_{q+1}$. Hence points 2 and 3 of Lemma \ref{lemmachiave} imply that $(x_q,E_q)\in D_{m+1}$, and consequently
\begin{equation}\label{Cauchy1}
d(E_q(0),E_m(0)) \leq \sigma_m.
\end{equation}
Since $(x_q,E_q)\in D_{m+1}$, Algorithm \ref{alg}(5) implies
\begin{equation}\label{Cauchy2}
d(x_q,x_m)< \delta_m.
\end{equation}

Since, by Lemma \ref{inclusionballs}, $\sigma_m, \delta_m \to 0$ the sequences $(x_m)_{m=1}^{\infty}$ and $(E_{m}(0))_{m=1}^{\infty}$ are Cauchy, and therefore they converge to some $x_{\ast}\in\mathbb{G}$ and $v\in \mathbb{G}$, respectively. Since $E_{m}\in V_1$ and $\omega(E_{m})=1$, we know that $|p(v)|=1$ and $v=(p(v),0)$. Using group translations, we can extend $v$ to a vector field $E_{\ast}\in V_1$ with $\omega(E_{\ast})=1$ and $E_{\ast}(0)=v$. Letting $q\to \infty$ in \eqref{Cauchy1} and \eqref{Cauchy2} implies that $d(E_{\ast}(0),E_m(0)) \leq \sigma_m$ and $d(x_{\ast},x_m)\leq \delta_m$. Combining Lemma \ref{inclusionballs} and the fact that $\delta_m<\delta_{m-1}/2$, we have the strict inequality $d(x_{\ast},x_m)< \delta_m$.

We now know that $x_{\ast}\in \overline{B_{\mathbb{G}}(x_m,\delta_m)}$ for every $m\geq 1$. Recall that $N=\cap_{m=0}^{\infty} U_m$ for open sets $U_m \subset \mathbb{G}$, and Algorithm \ref{alg}(8) states that $\overline{B_{\mathbb{G}}(x_{m},\delta_{m})}\subset U_{m}$. Hence $x_{\ast}\in N$. 

%Let $m\in\mathbb{N}$, then by Algorithm \ref{alg}(2)
%\begin{align*}
%\omega(E_{\ast}-E_0)&\leq \omega(E_{\ast}-E_m)+\omega(E_m-E_0)\\
%&\leq \sigma_m + \frac{\tau_0}{3}<\tau_0.
%\end{align*}

\medskip

\emph{Proof of (4).} As in the proof of (3) we have $(x_q,E_q)\in D_{m+1}$ for every $q\geq m\geq 0$. Therefore, by Lemma \ref{lemmachiave}(1), for every $q\geq m\geq 1$ we have
\begin{equation}\label{bla}
(x_{m-1}, E_{m-1})\leq_{(f_m,\sigma_{m-1}-\varepsilon'_m)} (x_q,E_q).
\end{equation}
Algorithm \ref{alg}(1) and \eqref{bla} (with $m$ and $q$ replaced by $q+1$) give
\begin{equation}\label{stima} 
E_qf_q(x_q)< E_q f_{q+1}(x_q)\leq E_{q+1}f_{q+1}(x_{q+1}) \quad \mbox{for every}\quad q \geq 0.
\end{equation} 
Hence, since $E_0f_0(x_0)\geq 0$, the sequence $(E_q f_q(x_q))_{q=0}^{\infty}$ is strictly increasing and positive.

Since $\mathrm{Lip}_{\mathbb{G}}(f_q)\leq 1$ for every $q\geq 1$, by Lemma \ref{lipismaximal}, the sequence $(E_qf_q(x_q))_{q=1}^{\infty}$ is bounded above by $1$. Consequently, $E_qf_q(x_q)\to L$ for some $0<L\leq 1$. Inequality \eqref{stima} implies that also $E_q f_{q+1}(x_q) \to L$, and, moreover, one has
\[E_q f(x_q)=E_q f_q(x_q)+E_q (f-f_q)(x_q)\]
and $|E_q (f-f_q)(x_q)|\leq \mathrm{Lip}_{\mathbb{G}}(f-f_q) \leq 2t_{q} \to 0$. Hence also $E_qf(x_q) \to L$.

Let $q\geq m\geq 0$ and consider
\[s_{m,q}:=E_qf_m(x_q)-E_{m-1}f_m(x_{m-1}).\]
By \eqref{bla} we have that $s_{m,q}\geq 0$. Letting $q\to \infty$, writing $f_{m}=f+(f_{m}-f)$, and using the $\mathbb{G}$-linearity of $f_{m}-f$ one gets
\begin{equation}\label{defiC}
s_{m,q}\to s_{m}:=(f_m-f)(E_{\ast}(0))+L-E_{m-1}f_m(x_{m-1})\geq 0.
\end{equation}
Since $\mathrm{Lip}_{\mathbb{G}}(f_{m}-f)\leq 2t_{m}$ and $E_{m-1}f_{m}(x_{m-1})\to L$, also $s_{m} \to 0$ as $m\to \infty$.
\eqref{bla} implies that
\begin{align}\label{bla2}
&|(f_m(x_q\exp(tE_{m-1})) - f_m(x_q))-(f_m(x_{m-1}\exp(tE_{m-1}))-f_m(x_{m-1}))|\nonumber \\
&\qquad \leq K (\sigma_{m-1}-\varepsilon'_m+(s_{m,q})^{\frac{1}{2s^2}}) |t| \quad \mbox{ for }t\in (-1,1).
\end{align}
Letting $q\to \infty$ in \eqref{bla2} shows that
\begin{align}\label{eqncruc}
&|(f_m(x_{\ast}\exp(tE_{m-1}))-f_m(x_{\ast}))-(f_m(x_{m-1}\exp(tE_{m-1}))-f_m(x_{m-1}))|\nonumber \\
&\qquad \leq K(\sigma_{m-1}-\varepsilon'_m+(s_{m})^{\frac{1}{2s^2}})|t| \quad \mbox{ for }t\in (-1,1).
\end{align}
Since $\mathrm{Lip}_{\mathbb{G}}(f)\leq 1$ and $d(E_{\ast}(0),E_{m-1}(0))\leq \sigma_{m-1}$, we obtain
\begin{align}
|f(x_{\ast}\exp(tE_{\ast}))-f(x_{\ast}\exp(tE_{m-1}))|&\leq d(x_{\ast}(tE_{\ast}(0)),x_{\ast}(tE_{m-1}(0)))\nonumber \\
&\leq \sigma_{m-1}|t|. \label{yyy}
\end{align}
Since $f-f_{m}$ is $\mathbb{G}$-linear and $\mathrm{Lip}_{\mathbb{G}}(f-f_m) \leq 2t_m$ we can estimate
\begin{align}
|(f-f_m)(x_{\ast}\exp(tE_{m-1}))-(f-f_m)(x_{\ast})| &= |(f-f_{m})(\exp(tE_{m-1}))|\nonumber \\
&= |(f-f_{m})(\delta_t(\exp(E_{m-1})))|\nonumber \\
&\leq t\mathrm{Lip}_{\mathbb{G}}(f-f_{m})\nonumber \\
&\leq 2t_{m}|t|.\label{zzz}
\end{align}

Combining \eqref{eqncruc}, \eqref{yyy} and \eqref{zzz} shows that for $t\in (-1,1)$:
\begin{align*}
&|(f(x_{\ast}\exp(tE_{\ast}))-f(x_{\ast}))-(f_m(x_{m-1}\exp(tE_{m-1}))-f_m(x_{m-1}))|\\
&\qquad \leq |(f_m(x_{\ast}\exp(tE_{m-1}))-f_m(x_{\ast}))\\
&\qquad \quad \quad -(f_m(x_{m-1}\exp(tE_{m-1}))-f_m(x_{m-1}))|\\
&\qquad \quad +|f(x_{\ast}\exp(tE_{\ast}))-f(x_{\ast}\exp(tE_{m-1}))|\\
&\qquad \quad +|(f-f_m)(x_{\ast}\exp(tE_{m-1}))-(f-f_m)(x_{\ast})|\\
&\qquad \leq (K(\sigma_{m-1}-\varepsilon'_m+(s_{m})^{\frac{1}{2s^2}})+\sigma_{m-1}+2t_m)|t|.
\end{align*}

Fix $\varepsilon>0$ and choose $m\geq 1$ such that
\[K(\sigma_{m-1}-\varepsilon'_m+(s_{m})^{\frac{1}{2s^2}})+\sigma_{m-1}+2t_m\leq \varepsilon/3\]
and
\[|E_{m-1}f_m(x_{m-1})-L|\leq \varepsilon/3.\]
Using the definition of $E_{m-1}f_{m}(x_{m-1})$, we find $0<\delta<1$ such that for every $|t|< \delta$
\[|f_m(x_{m-1}\exp(tE_{m-1}))-f_m(x_{m-1})-tE_{m-1}f_m(x_{m-1})|\leq \varepsilon|t|/3.\]
Hence, for every $|t|< \delta$
\begin{align*}
&|f(x_{\ast}\exp(tE_{\ast}))-f(x_{\ast})-tL|\\
&\qquad \leq|(f(x_{\ast}\exp(tE_{\ast}))-f(x_{\ast}))-(f_m(x_{m-1}\exp(tE_{m-1}))-f_m(x_{m-1}))|\\
&\qquad \quad +|f_m(x_{m-1}\exp(tE_{m-1}))-f_m(x_{m-1})-tE_{m-1}f_m(x_{m-1})|\\
&\qquad \quad +|E_{m-1}f_m(x_{m-1})-L| |t|\\
&\qquad \leq \varepsilon |t|.
\end{align*}
This proves that $E_{\ast}f(x_{\ast})$ exists and is equal to $L$. We have already seen that $(E_qf_q(x_q))_{q=1}^{\infty}$ is a strictly increasing sequence of positive numbers. This proves (4).

\medskip

\emph{Proof of (5).} The definition of $L$ and Lemma \ref{lemmascalarlip} imply
\[E_{\ast}f_m(x_{\ast})=L+E_{\ast}(f_{m}-f)(x_{\ast})=L+(f_m-f)(E_{\ast}(0)).\]
Using \eqref{defiC} shows $s_{m}=E_{\ast}f_m(x_{\ast})-E_{m-1}f_m(x_{m-1})\geq 0$. Substituting this in \eqref{eqncruc} gives (5).

\medskip

\emph{Proof of (6).} Property (6) is a consequence of (3), (4) and (5).

\medskip

\emph{Proof of (7).} We start by estimating $\omega(E_1-E_0)$.
By Algorithm \ref{alg}(6), for every $(x,E)\in D_1$ we have
\begin{equation*}Ef_1(x)\leq E_1f_1(x_1)+\lambda_1,\end{equation*}
where 
\begin{align}\label{defF}
f_1(x)=f_0(x)+t_0\left\langle x, E_0(0)\right\rangle.
\end{align}
Clearly
\[E_{0}f_{1}(x_{0})=E_{0}f_{0}(x_{0})+t_{0}\]
and
\[E_1 f_1(x_1)=E_{1}f_{0}(x_{1})+t_{0}\langle E_1(0), E_0(0)\rangle.\]
By Algorithm \ref{alg}(5), $(x_0,E_0)\in D_1$ and therefore
\begin{align}\label{disE}
E_0f_1(x_0)\leq E_1f_1(x_1)+\lambda_1.
\end{align}
A simple calculation using \eqref{disE} then gives
\begin{equation}
\omega(E_1,E_0)=\left\langle E_1(0), E_0(0)\right\rangle\geq 1-\frac{\lambda_1}{t_0}+\frac{E_0f_0(x_0)-E_1f_0(x_1)}{t_0}.
\end{equation}
Since $\omega$ is an inner product norm we can estimate as follows:
\begin{align*}
\omega(E_1-E_0)&=\left(\omega(E_1)^2+\omega(E_0)^2-2\omega(E_1,E_0)\right)^{\frac{1}{2}}\\
&=\left(2-2\omega(E_1,E_0)\right)^{\frac{1}{2}}\\
&\leq \left(\frac{2\lambda_1}{t_0}+\frac{2|E_0f_0(x_0)-E_1f_0(x_1)|}{t_0}\right)^{\frac{1}{2}}\\
&\leq \left(\frac{2\lambda_1}{t_0}+\frac{4\mathrm{Lip}_{\bbG}(f_0)}{t_0}\right)^{\frac{1}{2}}\\
&\leq \frac{\tau}{2},
\end{align*}
where in the last inequality above we used the estimate on $\mathrm{Lip}_{\bbG}(f_0)$ in Assumption \ref{Ass} and the estimate on $\lambda_{1}$ in Algorithm \ref{alg}(3). 
Next, as proved in \eqref{stimam}, for every $m\geq 1$ and $(x, E)\in D_{m+1}$ we have
\begin{equation}
\omega(E-E_{m})\leq \left(2\lambda_m/t_m\right)^{\frac{1}{2}}.
\end{equation}
Using the estimate in Algorithm \ref{alg}(4), this implies that for every $m\geq 1$:
\begin{align}
\omega(E_{m+1}-E_{m})< \frac{\tau}{2^{m+1}}.
\end{align}
Therefore,
\begin{align*}
\omega(E_{\ast}-E_0)=\lim_{m\to \infty} \omega(E_m-E_0)&\leq \sum_{m=2}^{\infty}\omega(E_m-E_{m-1})+\omega(E_1-E_0)\\
&< \tau \sum_{m=2}^{\infty}\frac{1}{2^{m}}+ \frac{\tau}{2}\\
&=\tau.
\end{align*}
\end{proof}

We now prove that the limit directional derivative $E_{\ast}f(x_{\ast})$ is almost locally maximal in horizontal directions. This is an adaptation of \cite[Lemma 3.5]{DM11}. 

\begin{lemma}\label{almostlocmax}
For all $\varepsilon>0$ there is $\delta_{\varepsilon}>0$ such that if $(x,E)\in D^f$ satisfies $d(x_{\ast},x)\leq \delta_{\varepsilon}$ and $(x_{\ast},E_{\ast})\leq_{(f,0)}(x,E)$, then
\[Ef(x)<E_{\ast}f(x_{\ast})+\varepsilon.\]
\end{lemma}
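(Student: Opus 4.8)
The plan is to argue by contradiction, extracting from a putative counterexample a sequence of pairs that eventually lands in every $D_m$, and then invoking the near-optimality built into Algorithm \ref{alg}(6). Suppose the conclusion fails: there is $\varepsilon>0$ and a sequence $(x^{(j)},E^{(j)})\in D^f$ with $d(x_{\ast},x^{(j)})\to 0$, with $(x_{\ast},E_{\ast})\leq_{(f,0)}(x^{(j)},E^{(j)})$, but $E^{(j)}f(x^{(j)})\geq E_{\ast}f(x_{\ast})+\varepsilon$ for all $j$. The first step is to fix $m$ large and show that for $j$ large enough the pair $(x^{(j)},E^{(j)})$ lies in $D_m$. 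This requires checking the two defining conditions of $D_m$ from Algorithm \ref{alg}(5): the ball condition $d(x^{(j)},x_{m-1})<\delta_{m-1}$, which holds once $d(x_{\ast},x^{(j)})$ is small because $d(x_{\ast},x_{m-1})<\delta_{m-1}$ strictly by Lemma \ref{lemmaquasifinale}(3); and the comparison $(x_{m-1},E_{m-1})\leq_{(f_m,\sigma_{m-1}-\varepsilon')}(x^{(j)},E^{(j)})$ for some $\varepsilon'\in(0,\sigma_{m-1})$.

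The comparison condition is where the real work lies, and it is obtained by chaining $(x_{m-1},E_{m-1})\leq_{(f_m,\sigma_{m-1}-\varepsilon'_m)}(x_{\ast},E_{\ast})$ from Lemma \ref{lemmaquasifinale}(5) with the hypothesis $(x_{\ast},E_{\ast})\leq_{(f,0)}(x^{(j)},E^{(j)})$. For the derivative inequality: $E_{m-1}f_m(x_{m-1})\leq E_{\ast}f_m(x_{\ast})$ from (5), and $E_{\ast}f(x_{\ast})\leq E^{(j)}f(x^{(j)})$ from the hypothesis; converting between $f$ and $f_m$ using $\mathbb{G}$-linearity of $f-f_m$ (Lemma \ref{lemmaquasifinale}(2)) with $\mathrm{Lip}_{\mathbb{G}}(f-f_m)\leq 2t_m$ contributes errors that vanish as $m\to\infty$. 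For the increment inequality: I add the increment bounds from the two $\leq$ relations, pass between $f$ and $f_m$ absorbing $\mathbb{G}$-linear errors of size $O(t_m|t|)$ and direction-change errors of size $d(E_{\ast}(0),E_{m-1}(0))|t|\leq\sigma_{m-1}|t|$ (exactly as in the computation \eqref{eqncruc}--\eqref{zzz}), and use $(a+b)^{1/2s^2}\leq a^{1/2s^2}+b^{1/2s^2}$ to combine the two square-root terms. The key point is that $E^{(j)}f(x^{(j)})-E_{\ast}f(x_{\ast})$ can be taken bounded (by $\mathrm{Lip}_{\mathbb{G}}(f)\leq 1$, say), so after all substitutions the bound reads $K(\sigma_{m-1}-\varepsilon'+(E^{(j)}f_m(x^{(j)})-E_{m-1}f_m(x_{m-1}))^{1/2s^2})|t|$ for a suitable $\varepsilon'\in(0,\sigma_{m-1})$, provided $m$ is large; this is precisely the $D_m$-membership condition.

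Once $(x^{(j)},E^{(j)})\in D_m$, Algorithm \ref{alg}(6) gives $E^{(j)}f_m(x^{(j)})\leq E_mf_m(x_m)+\lambda_m\leq E_mf_m(x_m)+t_m$. Passing back to $f$ via $\mathbb{G}$-linearity of $f-f_m$ and using $E_mf_m(x_m)\uparrow E_{\ast}f(x_{\ast})$ from Lemma \ref{lemmaquasifinale}(4), I get $E^{(j)}f(x^{(j)})\leq E_{\ast}f(x_{\ast})+O(t_m)+o_m(1)$, where all error terms go to $0$ as $m\to\infty$ uniformly in $j$. Choosing $m$ so large that this total error is strictly less than $\varepsilon$ contradicts $E^{(j)}f(x^{(j)})\geq E_{\ast}f(x_{\ast})+\varepsilon$. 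The main obstacle is purely bookkeeping: making sure that in the chain of substitutions converting $f\leftrightarrow f_m$ in both the derivative and the increment inequalities, every error term is genuinely controlled by $\sigma_{m-1},t_m,\varepsilon'_m,\lambda_m$ — all of which tend to $0$ by Lemma \ref{inclusionballs} — so that a single large choice of $m$ (independent of $j$) closes the argument; the quantity $\delta_\varepsilon$ is then taken to be the radius needed for $d(x_{\ast},x^{(j)})$ to be small enough to enter $B_{\mathbb{G}}(x_{m-1},\delta_{m-1})$ for that fixed $m$.
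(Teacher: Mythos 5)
Your overall plan — argue by contradiction, show the putative counterexample enters $D_m$ for some large fixed $m$, then invoke Algorithm \ref{alg}(6) with $E_m f_m(x_m)\uparrow E_{\ast}f(x_{\ast})$ — matches the paper. The gap is in how you verify the increment estimate needed for $D_m$-membership. You propose to add the two increment bounds coming from $(x_{m-1},E_{m-1})\leq_{(f_m,\sigma_{m-1}-\varepsilon'_m)}(x_{\ast},E_{\ast})$ and $(x_{\ast},E_{\ast})\leq_{(f,0)}(x,E)$ and then ``combine the two square-root terms'' via $(a+b)^{1/2s^2}\leq a^{1/2s^2}+b^{1/2s^2}$. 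This goes the wrong way. Writing $a:=E_{\ast}f_m(x_{\ast})-E_{m-1}f_m(x_{m-1})$, $A:=Ef(x)-E_{\ast}f(x_{\ast})$, $B:=Ef_m(x)-E_{\ast}f_m(x_{\ast})$, $C:=Ef_m(x)-E_{m-1}f_m(x_{m-1})=a+B$, chaining the two relations gives (up to small errors) a bound of $K(\sigma_{m-1}-\varepsilon'_m+a^{1/2s^2}+A^{1/2s^2})|t|$, whereas $D_m$-membership requires $K(\sigma_{m-1}-\varepsilon'+C^{1/2s^2})|t|$. Since $a^{1/2s^2}+A^{1/2s^2}\geq (a+A)^{1/2s^2}\approx C^{1/2s^2}$ by \emph{subadditivity}, you have a quantity that is \emph{larger} than the target by a gap $a^{1/2s^2}+B^{1/2s^2}-(a+B)^{1/2s^2}$ that is not controlled by $\sigma_{m-1},t_m$: the paper only knows $a=s_m\to 0$ qualitatively, so $a^{1/2s^2}$ need not be $O(\sigma_{m-1})$. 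The naive chaining therefore does not close.

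The paper's argument is structurally different at exactly this point. It splits into small-$t$ and large-$t$ regimes. For small $|t|$ it replaces the Lemma \ref{lemmaquasifinale}(5) increment bound (with its $(2s^2)$-th root) by the \emph{linear} directional-derivative estimate \eqref{estimated2}, so the middle term contributes $(a+\sigma_{m-1})|t|$ rather than $K a^{1/2s^2}|t|$. It then converts the linear $a=C-B$ into a difference $K(C^{1/2s^2}-B^{1/2s^2})$ via the factorization \eqref{factorizerepeat} (using $K\geq 4s^2$, $B,C\leq 2$), and — crucially — uses the contradiction hypothesis $A\geq\varepsilon$ to bound $A^{1/2s^2}-B^{1/2s^2}\leq (A-B)/\varepsilon^{(2s^2-1)/(2s^2)}\leq\sigma_{m-1}/s^2$, for which the specific choice $m\geq 4/\varepsilon^{(2s^2-1)/(2s^2)}$ is made at the outset. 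These two pieces cancel exactly to give $a+KA^{1/2s^2}\leq K(C^{1/2s^2}+\sigma_{m-1}/s^2)$ (this is \eqref{stima32}), which is what closes the small-$t$ case. The large-$t$ case is handled separately using $d(x,x_{\ast})\leq\delta_{\varepsilon}\leq\varepsilon'_m|t|$, and there only $a^{1/2s^2}\leq C^{1/2s^2}$ is needed, so no combination of roots arises. Your ``bookkeeping'' remark undersells this: the lower bound $A\geq\varepsilon$, the linear-versus-root distinction, and the two-case split are precisely what make the estimate work, and without them your proposed chain produces a bound that is off by an uncontrolled amount.
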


\begin{proof}
Fix $\varepsilon>0$. By Lemma \ref{inclusionballs} we choose $m\geq 1$ such that
\begin{equation}\label{param} 
m\geq 4/\varepsilon^{\frac{2s^{2}-1}{2s^{2}}}\quad \mbox{and}\quad \lambda_m,t_m\leq \varepsilon/4.
\end{equation}
Recall that $\varepsilon'_m=\min\{\varepsilon_m/2,\, \sigma_{m-1}/2\}$. Using Lemma \ref{lemmaquasifinale}(3) and \ref{lemmaquasifinale}(6) , fix $\delta_{\varepsilon}>0$ such that 
\[\delta_{\varepsilon}< \delta_{m-1}-d(x_{\ast},x_{m-1}) 
\]
such that for every $|t|< 3D\delta_{\varepsilon}^{\frac{1}{s}}/\varepsilon_{m}'$
\begin{align}\label{estimated2}
&|(f_m(x_{\ast}\exp(tE_{\ast}))-f_m(x_{\ast}))-(f_m(x_{m-1}\exp(tE_{m-1}))-f_m(x_{m-1}))|\nonumber \\
&\qquad \leq (E_{\ast}f_m(x_{\ast})-E_{m-1}f_m(x_{m-1})+\sigma_{m-1})|t|.
\end{align}
Such $\delta_{\varepsilon}$ exists since, by Lemma \ref{lemmaquasifinale}(5), we have $E_{\ast}f_m(x_{\ast})\geq E_{m-1}f_m(x_{m-1})$. 

We argue by contradiction and we suppose that $(x,E)\in D^f$ satisfies $d(x_{\ast},x)\leq \delta_{\varepsilon}$, $(x_{\ast},E_{\ast})\leq_{(f,0)} (x,E)$ and $Ef(x)\geq E_{\ast}f(x_{\ast})+\varepsilon$. We plan to show that $(x,E)\in D_m$. We first observe that this gives a contradiction. Indeed, Algorithm \ref{alg}(6) and the monotone convergence $E_mf_m(x_m)\uparrow E_{\ast}f(x_{\ast})$ would then imply
\[Ef_m(x)\leq E_mf_m(x_m)+\lambda_m\leq E_{\ast}f(x_{\ast})+\lambda_m.\]
By Lemma \ref{lemmaquasifinale}(2) and \eqref{param} we would deduce that
\begin{align*}
Ef(x)-E_{\ast}f(x_{\ast})&=(Ef_m(x)-E_{\ast}f(x_{\ast}))+E(f-f_m)(x)\\
&\leq \lambda_m+2t_m\\
&\leq 3\varepsilon /4,
\end{align*}
which contradicts the assumption that $Ef(x)\geq E_{\ast}f(x_{\ast})+\varepsilon$.

\medskip

\emph{Proof that $(x,E)\in D_m$.} Since $f-f_{m}$ is $\mathbb{G}$-linear we have  $D^f=D^{f_{m}}$ and therefore $(x,E)\in D^{f_{m}}$. Next observe that
\[d(x,x_{m-1})\leq d(x,x_{\ast})+d(x_{\ast},x_{m-1}) < \delta_{m-1}.\]
%and
%\[
%\omega(E-E_{m-1})\leq \omega(E-E_{\ast})+\omega(E_{\ast}-E_{m-1})< \tau_0/(3*2^{m-1})
%\]
Hence, it suffices to show that $(x_{m-1},E_{m-1})\leq_{(f_m, \sigma_{m-1}-\varepsilon'_m/2)} (x,E)$.
Lemma \ref{lipismaximal} implies
\[|E(f-f_{m})(x)|,\, |E_{\ast}(f-f_{m})(x_{\ast})|\leq \mathrm{Lip}_{\mathbb{G}}(f-f_{m}).\]
Hence by definition of $(x,E)$ and by \eqref{param} we have
\begin{align*}
Ef_m(x)-E_{\ast}f_m(x_{\ast})&\geq Ef(x)-E_{\ast}f(x_{\ast})-2\mathrm{Lip}_{\mathbb{G}}(f_m-f)\\
&\geq \varepsilon -4t_m\geq 0.
\end{align*}
Lemma \ref{lemmaquasifinale}(6) states that $(x_{\ast}, E_{\ast})\in D_{m}$, which implies $E_{m-1}f_m(x_{m-1})\leq E_{\ast}f_m(x_{\ast})$ and hence
\[Ef_m(x)\geq E_{\ast}f_m(x_{\ast})\geq E_{m-1}f_m(x_{m-1}).\]
In particular, the inequality $Ef_{m}(x)\geq E_{m-1}f_m(x_{m-1})$ proves the first requirement of $(x_{m-1},E_{m-1})\leq_{(f_m, \sigma_{m-1}-\varepsilon'_m/2)} (x,E)$.

We next deduce several inequalities from our hypotheses. Let
\begin{itemize}
\item $A:=Ef(x)-E_{\ast}f(x_{\ast})$,
\item $B:=Ef_m(x)-E_{\ast}f_m(x_{\ast})$,
\item $C:=Ef_m(x)-E_{m-1}f_m(x_{m-1})$.
\end{itemize}
By definition of $(x,E)$ we have $A\geq \varepsilon$, while the inequalities above give $0\leq B\leq C$. By Lemma \ref{lipismaximal} we have that $A,\, B,\, C\leq 2$. Recalling the factorization
\begin{equation}\label{factorizerepeat}
A-B=\left(B^{\frac{2s^2-1}{2s^2}}+B^{\frac{s^2-1}{s^2}}A^{\frac{1}{2s^2}}+\ldots+B^{\frac{1}{2s^2}}A^{\frac{s^2-1}{s^2}}+A^{\frac{2s^2-1}{2s^2}}\right)\left(A^{\frac{1}{2s^2}}-B^{\frac{1}{2s^2}}\right)
\end{equation}
and using Lemma \ref{lemmaquasifinale}(2), \eqref{param} and Algorithm \ref{alg}(3), we obtain
\begin{align*}
A^{\frac{1}{2s^2}}-B^{\frac{1}{2s^2}} &\leq (A-B)/\varepsilon^{\frac{2s^2-1}{2s^2}}\\
&=(E(f-f_m)(x)-E_{\ast}(f-f_m)(x_{\ast}))/\varepsilon^{\frac{2s^2-1}{2s^2}}\\\
&\leq 4t_m /\varepsilon^{\frac{2s^2-1}{2s^2}}\\\
& \leq mt_m\\
&\leq \sigma_{m-1}/s^2.
\end{align*}
Since $B,\, C\leq 2$ and $K\geq 4s^2$ we have
\[B^{\frac{2s^2-1}{2s^2}}+B^{\frac{s^2-1}{s^2}}A^{\frac{1}{2s^2}}+\ldots+B^{\frac{1}{2s^2}}A^{\frac{s^2-1}{s^2}}+A^{\frac{2s^2-1}{2s^2}}\leq 4s^2 \leq K.\]
Hence using \eqref{factorizerepeat} with $A$ replaced by $C$ gives
\[KC^{\frac{1}{2s^2}}-KB^{\frac{1}{2s^2}}\geq C-B=E_{\ast}f_m(x_{\ast})-E_{m-1}f_m(x_{m-1}).\]
Combining our estimates we eventually find
\begin{align}\label{stima32}
&E_{\ast}f_m(x_{\ast})-E_{m-1}f_m(x_{m-1})+K(Ef(x)-E_{\ast}f(x_{\ast}))^{\frac{1}{2s^2}}\nonumber \\
&\qquad =E_{\ast}f_m(x_{\ast})-E_{m-1}f_m(x_{m-1})+KA^{\frac{1}{2s^2}}\nonumber\\
&\qquad \leq KC^{\frac{1}{2s^2}}-KB^{\frac{1}{2s^2}}+K(B^{\frac{1}{2s^2}}+\sigma_{m-1}/s^2) \nonumber\\
&\qquad = K((Ef_m(x)-E_{m-1}f_m(x_{m-1}))^{\frac{1}{2s^2}}+\sigma_{m-1}/s^2).
\end{align}

We now prove the second requirement of $(x_{m-1},E_{m-1})\leq_{(f_m, \sigma_{m-1}-\varepsilon'_m/2)} (x,E)$. We need to estimate
\begin{equation}\label{incases}
|(f_m(x\exp(tE_{m-1}))-f_m(x))-(f_m(x_{m-1}\exp(tE_{m-1}))-f_m(x_{m-1}))|.
\end{equation}

We consider two cases, depending on whether $t$ is small or large.

\medskip

\emph{Suppose $|t|\leq 3C_D\delta_{\varepsilon}^{\frac{1}{s}}/\varepsilon_{m}'$.}
To estimate \eqref{incases} we use the inequality
\begin{align}\label{toestimate}
&|(f_m(x\exp(tE_{m-1})) - f_m(x))-(f_m(x_{m-1}\exp(tE_{m-1})) - f_m(x_{m-1})|\nonumber \\
&\qquad \leq |(f_m(x\exp(tE_{\ast})) - f_m(x))-(f_m(x_{\ast}\exp(tE_{\ast})) - f_m(x_{\ast}))| \nonumber \\
&\qquad \quad + |(f_m(x_{\ast}\exp(tE_{\ast})) - f_m(x_{\ast}))\nonumber \\
&\qquad \quad \qquad -(f_m(x_{m-1}\exp(tE_{m-1}))-f_m(x_{m-1}))| \nonumber \\
&\qquad \quad +|f_m(x\exp(tE_{m-1}))-f_m(x\exp(tE_{\ast})|.
\end{align}
Since $(x_{\ast},E_{\ast})\leq_{(f,0)} (x,E)$, by Lemma \ref{lemmascalarlip} and $\mathbb{G}$-linearity of $f_{m}-f$ we can estimate the first term in \eqref{toestimate} by
\begin{align}\label{estimated1}
&|(f_m(x\exp(tE_{\ast}))-f_m(x))-(f_m(x_{\ast}\exp(tE_{\ast}))-f_m(x_{\ast}))|\nonumber \\
&\qquad \leq |f(x\exp(tE_{\ast})-f(x))-(f(x_{\ast}\exp(tE_{\ast})-f(x_{\ast}))| \nonumber \\
&\qquad \leq K(Ef(x)-E_{\ast}f(x_{\ast}))^{\frac{1}{2s^2}}|t|.
\end{align}
Since $t$ is small, by \eqref{estimated2} we get
\begin{align*}
&|(f_m(x_{\ast}\exp(tE_{\ast})) - f_m(x_{\ast})) -(f_m(x_{m-1}\exp(tE_{m-1}))-f_m(x_{m-1}))|\\
&\qquad \leq (E_{\ast}f_m(x_{\ast})-E_{m-1}f_m(x_{m-1})+\sigma_{m-1})|t|.
\end{align*}
Lemma \ref{lemmaquasifinale} implies that the third term of \eqref{toestimate} is bounded above by $\sigma_{m-1}|t|$.
By combining the estimates of each term and using \eqref{stima32} we get
\begin{align}\label{hi}
&|(f_m(x\exp(tE_{m-1}))-f_m(x)) - (f_m(x_{m-1}\exp(tE_{m-1}))-f_m(x_{m-1}))| \nonumber \\
&\qquad \leq (K(Ef(x)-E_{\ast}f(x_{\ast}))^{\frac{1}{2s^2}} + E_{\ast}f_m(x_{\ast})-E_{m-1}f_m(x_{m-1})+2\sigma_{m-1})|t| \nonumber \\
&\qquad \leq (K((Ef_{m}(x)-E_{m-1}f_{m}(x_{m-1}))^{\frac{1}{2s^2}}+\sigma_{m-1}/s^2) + 2\sigma_{m-1})|t|\nonumber \\
&\qquad \leq K(\sigma_{m-1}-\varepsilon'_m/2+(Ef_m(x)-E_{m-1}f_m(x_{m-1}))^{\frac{1}{2s^2}})|t|,
\end{align}
where we have used $\varepsilon'_m\leq \sigma_{m-1}/2$ and $K\geq 4s^2$ in the final line. This gives the required estimate of \eqref{incases} for small $t$.

\medskip

\emph{Suppose $3C_D\delta_{\varepsilon}^{\frac{1}{s}}/\varepsilon_{m}' \leq |t|\leq 1$.}
To estimate \eqref{incases} we use the inequality
\begin{align*}
&|(f_m(x\exp(tE_{m-1}))-f_m(x))-(f_m(x_{m-1}\exp(tE_{m-1}))-f_m(x_{m-1}))|\\
&\qquad \leq |(f_m(x_{\ast}\exp(tE_{m-1}))-f_m(x_{\ast}))\\
&\qquad \quad \quad -(f_m(x_{m-1}\exp(tE_{m-1}))-f_m(x_{m-1}))|\\
&\qquad \quad +|f_m(x_{\ast}) - f_m(x)| + |f_m(x\exp(tE_{m-1})) - f_m(x_{\ast}\exp(tE_{m-1}))|.
\end{align*}
Lemma \ref{lemmaquasifinale}(5) shows that the first term on the right hand side is bounded above by
\[K(\sigma_{m-1}-\varepsilon'_m+( E_{\ast}f_m(x_{\ast})-E_{m-1}f_m(x_{m-1}))^{\frac{1}{2s^2}})|t|.\]
The second term is bounded by $d(x_{\ast},x)\leq \delta_{\varepsilon} \leq \varepsilon_{m}'|t|\leq K\varepsilon_{m}'|t|/s^2$. For the third term, we use Lemma \ref{conjugatedistance} with $x=\exp(t E_{m-1})$ and $y=x_{\ast}^{-1}x$ to get %to relate the Carnot-Carath\'{e}odory distance and the Euclidean distance. Notice $x+tE_{m-1}(x), x_{\ast}+tE_{m-1}(x_{\ast}) \in \overline{B_{\mathbb{G}}(x_{0},2+\delta_{0})}$. %Hence we can use the constants $C_{\mathrm{H}}$ and $C_{V}$ fixed before Algorithm \ref{alg}:
\begin{align*}
&|f_m(x\exp(tE_{m-1})) - f_m(x_{\ast}\exp(tE_{m-1}))|\\
&\qquad \leq d(x\exp(tE_{m-1}), x_{\ast}\exp(tE_{m-1}))\\
%&\qquad \leq C_{\mathrm{H}}|x+tE_{m-1}(x)-x_{\ast}-tE_{m-1}(x_{\ast})|%^{\frac{1}{2}}\\
%&\qquad \leq C_{\mathrm{H}}(1+C_{V})^{\frac{1}{2}}|x_{\ast}-x|^%%{\frac{1}{2}}\\
%&\qquad \leq C_{\mathrm{H}}^2(1+C_{V})^{\frac{1}{2}}%d(x_{\ast},x)^{\frac{1}{2}}\\
%&\qquad \leq C_{\mathrm{H}}^{2}(1+C_{V})^{\frac{1}{2}}%\delta_{\varepsilon}^{\frac{1}{2}}\\
%&\qquad \leq D\left( d(x_{\ast}, x) + d(\exp(t E_{m-1}))^{\frac{1}{s}}%d(x_{\ast}, x)^{\frac{s-1}{s}}+ d(\exp(t E_{m-1}))^{\frac{s-1}{s}}%d(x_{\ast}, x)^{\frac{1}{s}}\right)\\
&\qquad \leq C_D\left( d(x_{\ast}, x) + t^{\frac{1}{s}}d(x_{\ast}, x)^{\frac{s-1}{s}}+ t^{\frac{s-1}{s}}d(x_{\ast}, x)^{\frac{1}{s}}\right)\\
&\qquad \leq C_D(\delta_m+\delta_m^{\frac{s-1}{s}}+\delta_m^{\frac{1}{s}})\\
&\qquad \leq 3C_D \delta_m^{\frac{1}{s}}\\
&\qquad \leq \varepsilon_{m}'|t|\\
&\qquad \leq K\varepsilon_{m}' |t|/(4s^2).
\end{align*}
Combining the three estimates and using $E_{\ast}f_m(x_{\ast})\leq Ef_m(x)$ gives
\begin{align*}
&|(f_m(x\exp(tE_{m-1}))-f_m(x))-(f_m(x_{m-1}\exp(tE_{m-1}))-f_m(x_{m-1}))| \\
&\qquad \leq K(\sigma_{m-1}-\varepsilon'_m/2+( Ef_m(x)-E_{m-1}f_m(x_{m-1}))^{\frac{1}{2s^2}})|t|.
\end{align*}
This gives the required estimate of \eqref{incases} for large $t$ and therefore
\[(x_{m-1},E_{m-1})\leq_{(f_m,\sigma_{m-1}-\varepsilon'_m/2)}(x,E),\]
which concludes the proof.
\end{proof}

We conclude this section proving Proposition \ref{DoreMaleva} and  Theorem \ref{maintheorem}.

\begin{proof}[Proof of Proposition \ref{DoreMaleva}]
Proposition \ref{DoreMaleva} easily follows from Lemma \ref{lemmaquasifinale} and Lemma \ref{almostlocmax}. Indeed, Lemma \ref{lemmaquasifinale} states that there is $f\colon \mathbb{G} \to \mathbb{R}$ Lipschitz such that $f-f_{0}$ is linear and $\mathrm{Lip}_{\mathbb{G}}(f-f_{0})\leq 2t_{0}\leq \mu$. It also states there is $(x_{\ast}, E_{\ast})\in D^{f}$ satisfying $d(x_{\ast},x_{0})<\delta_{0}$ and $E_{\ast}f(x_{\ast})>0$. Lemma \ref{almostlocmax} then shows that $E_{\ast}f(x_{\ast})$ is almost locally maximal in the sense of Proposition \ref{DoreMaleva}.
\end{proof}

\begin{proof}[Proof of Theorem \ref{maintheorem}]
Let $B\subset V_1$ be a ball of directions as in Assumption 6.1. Let $f_{0}\colon \mathbb{G}\to \mathbb{R}$ be a Lipschitz function. Multiplying $f_{0}$ by a non-zero constant does not change the set of points where it is Pansu differentiable. Hence we can assume without loss of generality that $\mathrm{Lip}_{\mathbb{G}}(f_0)\leq 1/4$.

Fix an arbitrary pair $(x_{0},E_{0})\in D^{f_{0}}$ and apply Proposition \ref{DoreMaleva} with $\delta_{0}=1$, $\mu=1/4$ and $K=4s^2$. This gives a Lipschitz function $f\colon \mathbb{G}\to \mathbb{R}$ such that $f-f_{0}$ is $\mathbb{G}$-linear with $\mathrm{Lip}_{\mathbb{G}}(f-f_{0})\leq 1/4$ and a pair $(x_{\ast},E_{\ast})\in D^{f}$ with $x_{\ast}\in N$ and $E_{\ast}f(x_{\ast})>0$ which is almost locally maximal in the following sense.

For any $\varepsilon>0$ there is $\delta_{\varepsilon}>0$ such that whenever $(x,E)\in D^{f}$ satisfies both
\begin{enumerate}
\item $d(x,x_{\ast})\leq \delta_{\varepsilon}$,  $Ef(x)\geq E_{\ast}f(x_{\ast})$, and
\item for any $t\in (-1,1)$:
\begin{align*}
&|(f(x\exp(tE_{\ast}))-f(x))-(f(x_{\ast}\exp(tE_{\ast}))-f(x_{\ast}))|\\
& \qquad \leq 4s^2|t| ( Ef(x)-E_{\ast}f(x_{\ast}) )^{\frac{1}{2s^2}},
\end{align*}
\end{enumerate}
%{\color{red}WHY 8 IF $K=4s^2$? }
then
\[Ef(x)<E_{\ast}f(x_{\ast})+\varepsilon.\]

Since $\mathrm{Lip}_{\mathbb{G}}(f_{0})\leq 1/4$ and $\mathrm{Lip}_{\mathbb{G}}(f-f_{0})\leq 1/4$ we have $\mathrm{Lip}_{\mathbb{G}}(f)\leq 1/2$. Notice that $(x_{\ast},E_{\ast})$ is also almost locally maximal in the sense of Theorem \ref{almostmaximalityimpliesdifferentiability}, since the restriction on pairs above is weaker than that in Theorem \ref{almostmaximalityimpliesdifferentiability}. Hence Theorem \ref{almostmaximalityimpliesdifferentiability} implies that $f$ is Pansu differentiable at $x_{\ast}\in N$. Since a $\mathbb{G}$-linear function is Pansu differentiable everywhere, it follows $f_{0}$ is Pansu differentiable at $x_{\ast}$. This proves Theorem \ref{maintheorem}.
\end{proof}

\end{document}